\newtheorem{theorem}{Theorem}
\newtheorem{lemma}[theorem]{Lemma}
\newtheorem{cor}[theorem]{Corollary}
\begin{document}

\title{$PD_4$-complexes and 2-dimensional duality  groups}

\author{Jonathan A. Hillman }
\address{School of Mathematics and Statistics\\
     University of Sydney, NSW 2006\\
      Australia }

\email{jonathan.hillman@sydney.edu.au}

\begin{abstract}
This paper is a synthesis and extension of three earlier papers on 
$PD_4$-complexes $X$ with fundamental group $\pi$ 
such that $c.d.\pi=2$ and $\pi$ has one end. 
Our goal is to show that the homotopy types of such complexes 
are determined by $\pi$, the Stiefel-Whitney classes and the equivariant intersection pairing on $\pi_2(X)$.
We achieve this under further conditions on $\pi$.
\end{abstract}

\keywords{cohomological dimension, homotopy intersection, $k$-invariant, 
$PD_4$-complex}

\subjclass{57P10}

\maketitle

It remains an open problem to give a homotopy classification of 
closed 4-manifolds or $PD_4$-complexes,  
in terms of standard invariants such as the fundamental
group, characteristic classes and intersection pairings.
Hambleton and Kreck showed that if $X$ is orientable and 
$H_2(X;\mathbb{Q})\not=0$ the homotopy type of $X$ is determined by its
Postnikov 2-stage $P_2(X)$ and the image of 
the fundamental class $[X]$ in $H_4(P_2(X);\mathbb{Z})$, 
and if $\pi$ is finite and of cohomological period dividing 4 this image
is in turn determined by the equivariant intersection pairing on $\pi_2(X)$ \cite{HK88}.
Baues and Bleile have extended the first part of this result to all $PD_4$-complexes:
two $PD_4$-complexes $X$ and $Y$ are homotopy equivalent if and only if 
there is a homotopy equivalence $h:P_2(X)\to{P_2(Y)}$ such that $h^*w_1(Y)=w_1(X)=w$, say, 
and which carries the image of $[X]$ in $H_4(P_2(X);\mathbb{Z}^w)$ 
to the image of $\pm[Y]$ in $H_4(P_2(Y);\mathbb{Z}^w)$.
They also give a homotopy classification of $PD_4$-complexes (up to 2-torsion) 
in terms of homotopy classes of chain complexes with a homotopy 
commutative diagonal and an additional quadratic structure \cite{BB}.
However, there is still the question of how to characterize
the classes in $H_4(P_2(X);\mathbb{Z}^w)$ which
correspond to $PD_4$-complexes.

We shall extend the work of \cite{HK88} to relate these classes 
in $H_4(P_2(X);\mathbb{Z}^w)$ to intersection pairings, 
for certain cases with $\pi=\pi_1(X)$ infinite.
The central idea is that of ``strongly minimal $PD_4$-complex", 
one for which the equivariant intersection pairing is identically 0.
(We shall in fact use the equivalent cohomological pairing.)
If there is a 2-connected degree-1 map $f:X\to{Z}$, 
with $Z$ strongly minimal,
and if the orientation character $w:\pi\to\mathbb{Z}^\times$ does not split
then the homotopy type of  $X$ is determined by the homotopy type of $Z$ 
and the equivariant intersection pairing.
Every $PD_4$-complex $X$  with fundamental group $\pi$
has such a ``strongly minimal model" $Z$ if and only if $c.d.\pi\leq2$.
(See Theorem \ref{om=sm}  below.)
This class of groups is both tractable and of direct interest 
to low-dimensional geometric topology, as it includes all surface groups, 
knot groups and the groups of many other bounded 3-manifolds. 
We expect that if $c.d.\pi\leq2$ the homotopy type of $Z$ is
determined by $\pi$,  $w$ and the Wu class $v_2(Z)$,
and that if $v_2(X)$ is induced from $\pi$ then the minimal model is unique.
(In the latter case,  the homotopy type of $X$ is determined by $\pi$, 
$w$, $v_2(X)$ and the equivariant intersection pairing.)
However, this is only known for $\pi$ a free group, a surface group,
a semidirect product $F(r)\rtimes\mathbb{Z}$ or a solvable Baumslag-Solitar group $\mathbb{Z}*_m$.

We shall now outline the paper in more detail.
The first two sections are algebraic.
In particular, Theorem \ref{wh-herm} (in \S2) establishes a connection between 
hermitean pairings and the Whitehead quadratic functor $\Gamma_W$.
Sections 3--8 consider the homotopy classification of $PD_4$-complexes,
and introduce several notions of minimality.
The first main result is Theorem \ref{thm06} in \S7, 
where it is shown that  two $PD_4$-complexes 
with the same strongly minimal model and $\pm$isometric intersection pairings
are homotopy equivalent,
provided $w:\pi\to\mathbb{Z}^\times$ does not split.
Sections 9 and 10 determine the strongly minimal $PD_4$-complexes
with $\pi_2=0$ and for which $\pi$ has finitely many ends.
Strongly minimal $PD_4$-complexes with $\pi$ a semidirect product
$\nu\rtimes\mathbb{Z}$ (with $\nu$ finitely presentable) 
are shown to be mapping tori in \S11.
When $\nu$ is a free group the homotopy type of such a mapping torus
is determined by $\pi$ and the Stiefel-Whitney classes, by Theorem \ref{freebyZ}.
The next five sections lead to the second main result,
Theorem \ref{min} (in \S16), 
which extends the result  of Theorem \ref{freebyZ} to the case when
$\pi$ has one end and  $c.d.\pi=2$ {\it provided\/} that the image of 
$\Pi\odot_\pi\Pi$ in $\mathbb{Z}^w\otimes_{\mathbb{Z}[\pi]}\Gamma_W(\Pi)$ is 2-torsion free, 
where $\Pi=\pi_2(X)\cong\overline{H^2(\pi;\mathbb{Z}[\pi])}$.
This theorem is modelled on the much simpler case analyzed in \S14,
in which $\pi$ is a $PD_2$-group.
Apart from the notion of minimality, the main technical points are
the connection between hermitean pairings and $\Gamma_W$,
the fact that a certain ``cup product" defines an isomorphism, 
and the 2-torsion condition.
In  \cite{Hi09}, we showed that the cup-product condition held for
surface groups, torus knot groups and solvable Baumslag-Solitar groups.
Here we show that it holds for all finitely presentable groups $\pi$ 
with one end and $c.d.\pi=2$ (Theorem \ref{cupthm}).
The 2-torsion condition is only known for $\pi$ a  $PD_2$-group 
or $\pi$ a solvable Baumslag-Solitar group (Theorem \ref{BStf}), 
and does not hold for all the cases covered by Theorem \ref{freebyZ}.
The final section considers the classification up to TOP $s$-cobordism 
or homeomorphism of closed 4-manifolds with groups as in Theorem \ref{min}.
In particular, it is shown that a remarkable 2-knot discovered by Fox 
is determined up to TOP isotopy and reflection by its knot group.

The theme of Hambleton, Kreck and Teichner \cite{HKT} is close to ours, 
although their methods are very different.
They use Kreck's modified surgery theory to classify up to $s$-cobordism 
closed orientable 4-manifolds with fundamental groups of geometric dimension 2 
(subject to some $K$- and $L$-theoretic hypotheses),
and they show also that every automorphism of the algebraic 2-type 
is realized by an $s$-cobordism, in many cases.
(They do not require that $\pi$ have one end,
which is a restriction imposed by our arguments.
However, when $\pi$ is a free group there is a simpler, 
more homological approach, which also uses the ideas of \S2 below
\cite{Hi04a}.)

This paper is a synthesis and extension of three papers 
\cite{Hi04b,Hi06,Hi09} which explored the role of minimality
in the classification of $PD_4$-complexes,
in particular, those with fundamental group $\pi$ 
such that $c.d.\pi=2$ and $\pi$ has one end. 
(Some aspects were considered much earlier \cite{Hi91,Hi94}.)
Apart from the benefits of revision, 
the main novelties are in showing that strongly minimal finite $PD_4$-complexes have minimal Euler characteristic (Corollary \ref{sm-chim-om}),
strong minimality is equivalent to order minimality if and only if $c.d.\pi\leq2$ (Theorem \ref{om=sm}),
verification that cup product defines an isomorphism for all 2-dimensional duality groups (Theorem \ref{cupthm}),
clarification of the role of the refined $v_2$-type,
and relaxation of some of the hypotheses.

\section{modules and group rings}

Let $\pi$ be a finitely presentable group and $w:\pi\to\mathbb{Z}^\times=\{\pm1\}$ be a homomorphism.
(This shall represent the orientation character for a $PD_n$-complex
with fundamental group $\pi$.
We shall at times view it as a class in $H^1(\pi;\mathbb{F}_2)$.)
Define an involution on $\mathbb{Z}[\pi]$ by $\bar g=w(g)g^{-1}$, for all $g\in\pi$.
Let $\mathbb{Z}$ and $\mathbb{Z}^w$ be the augmentation and $w$-twisted 
augmentation rings, and $\varepsilon:\mathbb{Z}[\pi]\to\mathbb{Z}$ and 
$\varepsilon_w:\mathbb{Z}[\pi]\to\mathbb{Z}^w$ be the augmentation 
and the $w$-twisted augmentation, defined by $\varepsilon(g)=1$
and $\varepsilon_w(g)=w(g)$, for all $g\in\pi$, respectively.
Let $I_w=\mathrm{Ker}(\varepsilon_w)$.

All modules considered here shall be left modules, unless otherwise noted.
However, if $L$ is a left ${\mathbb{Z}[\pi]}$-module 
the dual $Hom_{\mathbb{Z}[\pi]}(L,{\mathbb{Z}[\pi]})$
and the higher extension groups 
$Ext_{\mathbb{Z}[\pi]}^i(L,{\mathbb{Z}[\pi]})$
are naturally right modules.
If $R$ is a right ${\mathbb{Z}[\pi]}$-module let $\overline{R}$ be
the corresponding left ${\mathbb{Z}[\pi]}$-module with the conjugate structure 
given by $g.r=r.\bar g$, for all $g\in{\mathbb{Z}[\pi]}$ and $r\in R$.
Let $L^\dagger=\overline{Hom_{\mathbb{Z}[\pi]}(L,{\mathbb{Z}[\pi]})}$ 
and $E^iL=\overline{Ext_{\mathbb{Z}[\pi]}^i(L,{\mathbb{Z}[\pi]})}$, 
for $i\geq 0$ be the conjugate dual left modules. 
If $L$ is free, stably free or projective then so is $E^0L=L^\dagger$.
We shall consider $\mathbb{Z}$ and $\mathbb{Z}^w$ to be bimodules, 
with the same left and right $\pi$-structures.
(Note that $\overline{\mathbb{Z}}=\mathbb{Z}^w$.)

The modules $E^q\mathbb{Z}=\overline{H^q(\pi;\mathbb{Z}[\pi])}$ with $q\leq3$ 
shall recur throughout this paper. 
In particular, $E^0\mathbb{Z}\cong\mathbb{Z}^w$ if $\pi$ is finite and is 0
otherwise, while $E^1\mathbb{Z}$ reflects the number of ends of $\pi$.
It is 0 if $\pi$ is finite or has one end, 
infinite cyclic if $\pi$ has two ends (i.e., is virtually infinite cyclic)
and is free abelian of infinite rank otherwise.

\begin{lemma}
Let $M$ be a $\mathbb{Z}[\pi]$-module with a finite resolution
of length $n$ and such that $E^iM=0$ for $i<n$. Then 
$Aut(M)\cong{Aut(E^nM)}$.
\end{lemma}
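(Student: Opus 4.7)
The plan is to show that $E^n$ implements an involutive contravariant self-equivalence on the subcategory of $\mathbb{Z}[\pi]$-modules satisfying the hypotheses, sending $M$ to $E^nM$; the statement on automorphism groups is then formal.

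First I would fix a resolution $P_\bullet\to M$ of length $n$ by finitely generated projective $\mathbb{Z}[\pi]$-modules. Applying $Hom_{\mathbb{Z}[\pi]}(-,\mathbb{Z}[\pi])$ produces a cochain complex whose cohomology in degree $i$ equals $Ext^i_{\mathbb{Z}[\pi]}(M,\mathbb{Z}[\pi])$; by hypothesis this vanishes for $i<n$. So after conjugation and reindexing, $D_i:=P_{n-i}^\dagger$ is a length-$n$ finitely generated projective resolution of $E^nM$. Iterating the construction and using the natural biduality $P_i\cong P_i^{\dagger\dagger}$ available for finitely generated projectives, the same procedure applied to $D_\bullet$ recovers $P_\bullet$; hence $E^i(E^nM)=0$ for $i<n$ (so $E^nM$ satisfies the hypothesis of the lemma) and there is a natural isomorphism $\eta_M:M\xrightarrow{\sim}E^n(E^nM)$.

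Since $E^n$ is contravariant, $\phi\mapsto E^n\phi$ defines an antihomomorphism $Aut(M)\to Aut(E^nM)$ with $(E^n\phi)^{-1}=E^n(\phi^{-1})$. Naturality of $\eta$ identifies $E^n\circ E^n$ with the identity, so the symmetric map $Aut(E^nM)\to Aut(M)$ provides a two-sided set-theoretic inverse; composing either map with inversion converts the antihomomorphism into the required group isomorphism $Aut(M)\cong Aut(E^nM)$.

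The main technical point is the naturality of $E^n\circ E^n\cong\mathrm{id}$: this rests on biduality for finitely generated projectives (to ensure $P_i^{\dagger\dagger}\cong P_i$ termwise and compatibly with the differentials) together with the vanishing $E^iM=0$ for $i<n$, which ensures the dualized complex is a genuine resolution of $E^nM$ rather than merely a complex whose top cohomology is $E^nM$. Both ingredients are essential to the argument.
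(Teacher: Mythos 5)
Your argument is essentially the paper's own proof: dualize a finite (finitely generated projective) resolution, use the vanishing of $E^iM$ for $i<n$ to see the dual is a resolution of $E^nM$, invoke biduality of finitely generated projectives to get $E^nE^nM\cong M$, and transport automorphisms. Your extra care in noting that $f\mapsto E^nf$ is an antihomomorphism and must be composed with inversion to yield a genuine group isomorphism is a correct refinement of a point the paper glosses over, but it does not change the route.
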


\begin{proof}
Since $E^iM=0$ for $i<n$ the dual of a resolution of length $n$
for $M$ is a finite resolution for $E^nM$. 
Taking duals again recovers the original resolution, 
and so $E^nE^nM\cong{M}$.
If $f\in{Aut(M)}$ it extends to an endomorphism of the resolution
inducing an automorphism $E^nf$ of $E^nM$. 
Taking duals again gives $E^nE^nf=f$.
Thus $f\mapsto{E^nf}$ determines an isomorphism 
$Aut(M)\cong{Aut(E^nM)}$.
\end{proof}

A group $\pi$ is an {\it $n$-dimensional duality group\/} over $\mathbb{Z}$
if the augmentation $\mathbb{Z}[\pi]$ module $\mathbb{Z}$
has a finite projection resolution of length $n$, 
$H^i(\pi;\mathbb{Z}[\pi])=0$ for $i<n$ and the {\it dualizing module\/}
$\mathcal{D}=H^n(\pi;\mathbb{Z}[\pi])$ is torsion free as an abelian group.
(See Theorem VIII.10.1 of \cite{Br}.)
We then have $Aut(E^n\mathbb{Z})=\mathbb{Z}^\times$, by Lemma 1.
Finitely generated free groups are duality groups of dimension 1. 
If $\pi$ is finitely presentable and $c.d.\pi=2$ then 
$H^2(\pi;\mathbb{Z}[\pi])\not=0$,
and it is torsion free, by Proposition 13.7.1 of \cite{Ge}.
Hence $\pi$ is a 2-dimensional duality group if and only if it has one end.

In general,
$H^2(\pi;\mathbb{Z}[\pi])$ is $0$, $\mathbb{Z}$ or not finitely generated 
(\cite{Fa74} -- see Proposition 13.7.12 of \cite{Ge}).
In the latter case,
$H^2(\pi;\mathbb{Z}[\pi])$ must have infinite rank, 
by the main result of \cite{Bo}.
It remains open whether $H^2(\pi;\mathbb{Z}[\pi])$ must be free 
as an abelian group.

Let $F(n)$ be the free group with basis $\{x_1,\dots,x_n\}$.
The augmentation ideal of $\mathbb{Z}[F(n)]$
is freely generated by $\{x_1-1,\dots,x_n-1\}$ 
as a left $\mathbb{Z}[F(n)]$-module
and so we may write
\[
w-1=\Sigma_{1\leq{i}\leq{n}}\frac{\partial{w}}{\partial{x_i}}(x_i-1),
\]
for $w\in{F(n)}$.
Since $vw-1=v-1+v(w-1)$, for all $v,w\in{F(\mu)}$,
the Leibniz conditions 
\[
\frac{\partial{vw}}{\partial{x_i}}=
\frac{\partial{v}}{\partial{x_i}}+v\frac{\partial{w}}{\partial{x_i}}
\]
hold for all $v,w\in{F(\mu)}$ and $1\leq{i}\leq{n}$.
In particular, 
$\frac{\partial{1}}{\partial{x_i}}=0$ and 
$\frac{\partial{w^{-1}}}{\partial{x_i}}=
-w^{-1}\frac{\partial{w}}{\partial{x_i}}$,
for $1\leq{i}\leq{n}$.
We may extend these functions linearly to 
``derivations" of $\mathbb{Z}[F(n)]$.

Now let $\pi$ be a group with a finite presentation 
$\mathcal{P}=\langle{x_1,\dots,x_g}|{w_1,\dots,w_r}\rangle^\varphi,$
where $\varphi:F(g)\to\pi$ is an epimorphism with kernel 
the normal closure of $\{w_1,\dots,w_r\}$.
Let $def(\mathcal{P})=g-r$ be the deficiency  and $C(\mathcal{P})$ be 
the 2-complex corresponding to this presentation.
Then $\chi(C(\mathcal{P}))=1-def(\mathcal{P})$.
A choice of lifts of the $q$-cells of $C(\mathcal{P})$ to the universal cover 
$\widetilde{C(\mathcal{P})}$ determines
a basis for $C_q(\widetilde{C(\mathcal{P})})$ as a free left $\mathbb{Z}[\pi]$-module.
We view these as modules of column vectors.
The differentials are given by 
$\partial_1(c_1^{(i)})=(\varphi(x_i)-1)c_0$ and 
$\partial_2(c_2^{(j)})=\Sigma_{1\leq{i}\leq{g}}\varphi(\frac{\partial{w_j}}{\partial{x_i}})
c_1^{(i)}$.
(We extend $\varphi$ linearly to the group rings.)
The module of 0-cycles $Z_0(\widetilde{C(\mathcal{P})})$ is isomorphic to
$I(\pi)$, and so $I(\pi)$ has a $g\times{r}$ presentation matrix 
with $(i,j)$th entry $\varphi(\frac{\partial{w_j}}{\partial{x_i}})$.
(We shall refer to  $C_*(\widetilde{C(\mathcal{P})})$  as the {\it Fox-Lyndon resolution} of $\mathbb{Z}$ 
associated to $\mathcal{P}$.)

\begin{lemma}
\label{freeE^1}
Let $\pi=G*F(s)$, where $G=*_{i=1}^rG_i$ is the free product of $r\geq1$
one-ended groups $G_i$ and $s\geq0$. 
Then $E^1\mathbb{Z}\cong\mathbb{Z}[\pi]^{r+s-1}$.
\end{lemma}

\begin{proof}
If $s=0$ the result follows from the Mayer-Vietoris sequence for the free product, with coefficients $\mathbb{Z}[\pi]$.

In general, let $C_*(G)$ be a resolution of the augmentation module by free $\mathbb{Z}[G]$-modules
with $C_0(G)=\mathbb{Z}[G]$.
Then there is a corresponding resolution $C_*(\pi)$ with 
$C_q(\pi)\cong\mathbb{Z}[\pi]\otimes_{\mathbb{Z}[G]}C_q(G)$
if $q\not=1$ and $C_1(\pi)\cong\mathbb{Z}[\pi]\otimes_{\mathbb{Z}[G]}C_q(G)\oplus\mathbb{Z}[\pi]^s$.
Hence  there is a short exact sequence of chain complexes
\[
0\to\mathbb{Z}[\pi]\otimes_{\mathbb{Z}[G]}C_*(G)\to{C_*(\pi)}\to\mathbb{Z}[\pi]^s\to0,
\]
where the third term is concentrated in degree 1.
The exact sequence of cohomology  with coefficients $\mathbb{Z}[\pi]$
and the first step together 
give a short exact sequence
\[
0\to\mathbb{Z}[\pi]^s\to{E^1\mathbb{Z}=H^1(\pi;\mathbb{Z}[\pi])}\to
{H^1(G;\mathbb{Z}[G])}\otimes_{\mathbb{Z}[G]}\mathbb{Z}[\pi]
\cong\mathbb{Z}[\pi]^{r-1}\to0,
\]
from which the lemma follows easily.
\end{proof}

The hypothesis of this lemma holds if $\pi$ is torsion free but not free.
On the other hand, if $\pi$ is a nontrivial free group
then $E^1\mathbb{Z}$ has projective dimension 1 as a $\mathbb{Z}[\pi]$-module, 
and so the conclusion fails.

\section{the whitehead functor and hermitean pairings}

Let $A$ and $B$ be abelian groups.
A function $f:A\to B$ is {\it quadratic} if $f(-a)=f(a)$ for all $a\in A$ 
and if $f(a+b)-f(a)-f(b)$ defines a bilinear function from $A\times A$ to $B$.
The {\it Whitehead quadratic functor} $\Gamma_W$ assigns to each abelian group 
$A$ an abelian group $\Gamma_W(A)$ and a quadratic function 
$\gamma_A:A\to\Gamma_W(A)$ which is universal for quadratic functions 
with domain $A$.
The natural epimorphism from $A$ onto
$A/2A=\mathbb{F}_2\otimes{A}$ is quadratic, 
and so induces a canonical epimorphism $q_A$ from $\Gamma_W(A)$ to $A/2A$.
The kernel of this epimorphism is the image of the
symmetric square $A\odot{A}$ under the homomorphism $s$ from
$A\odot{A}$ to $\Gamma_W(A)$ given by 
$s(a\odot{b})=\gamma_A(a+b)-\gamma_A(a)-\gamma_A(b)$.
Thus there is an exact sequence
\begin{equation*}
\begin{CD}
A\odot{A}@>s>>\Gamma_W(A)@>q_A>>A/2A\to0.
\end{CD}
\end{equation*}
Moreover, $2\gamma_A(a)=s(a\odot{a})$, for all $a\in{A}$. 
(Topologically, if $\eta:S^3\to{S^2}$ is the Hopf map and $x\in\pi_2(X)$
then $2x\circ\eta=[x,x]$, the Whitehead product in $\pi_3(X)$.)
This sequence is short exact if $A$ is torsion free.
(See \S1.2 of \cite{Ba'}).

If $A$ and $B$ are abelian groups the inclusions into $A\oplus B$
induce a canonical splitting
$\Gamma_W(A\oplus B)\cong\Gamma_W(A)\oplus\Gamma_W(B)\oplus(A\otimes B)$.
Since $\Gamma(\mathbb{Z})\cong\mathbb{Z}$ it follows by a finite induction that
if $A\cong\mathbb{Z}^r$ then $\Gamma_W(\mathbb{Z}^r)$ is 
finitely generated and free, and that $s$ is injective.
If $A$ is any free abelian group,
every finitely generated subgroup of such a group 
lies in a finitely generated direct summand,
and so $\Gamma_W(A)$ is again free, and $s$ is injective.

A $w$-hermitean pairing on a  finitely generated $\mathbb{Z}[\pi]$-module
$M$ is a function $b:M\times{M}\to\mathbb{Z}[\pi]$ which is linear in the first variable 
and such that $b(n,m)=\overline{b(m,n)}$, for all $m,n\in{M}$.
The {\it adjoint homomorphism\/} $\widetilde{b}:M\to{M}^\dagger$
is given by $\widetilde{b}(n)(m)=b(m,n)$, for all $m,n\in{M}$.
The pairing $b$ is {\it nonsingular} if $\widetilde{b}$ is an isomorphism.

Let $Her_w(M)$ be the group of $w$-hermitean pairings on $M$.
Let $ev_M(m)(n,n')=\overline{n(m)}n'(m)$ for all $m\in M$ and $n,n'\in M^\dagger$.
Then $ev_M(m)(n,n')$ is quadratic in $m$ and $w$-hermitean in $n$ and
$n'$ and $ev_M(gm)=w(g)ev_M(m)$ for all $g\in\pi$ and $m\in M$.
Hence $ev_M$ determines a homomorphism 
\[
B_M:\mathbb{Z}^w\otimes_{\mathbb{Z}[\pi]}\Gamma_W(M)\to Her_w(M^\dagger).
\]
Let  $M\odot_\pi{M}=\mathbb{Z}^w\otimes_{\mathbb{Z}[\pi]}(M\odot_\mathbb{Z} M)$,
where $M\odot{M}$ has the diagonal $\pi$-action, 
given by $g(m\odot{n})=gm\odot{gn}$, for all $g\in\pi$ and $m,n\in M$.
 
\begin{theorem}
\label{wh-herm}
Let $\pi$ be a group, $w:\pi\to \mathbb{Z}^\times$ a homomorphism 
and $M$ a finitely generated projective $\mathbb{Z}[\pi]$-module.
If $\mathrm{Ker}(w)$ has no element of order $2$ then $B_M$ is surjective,
while if there is no element $g\in\pi$ of order $2$ such that $w(g)=-1$
then $B_M$ is injective.
\end{theorem}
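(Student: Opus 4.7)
The strategy is to reduce to the rank-one free case $M=\mathbb{Z}[\pi]$ by exploiting the additive decomposition of $\Gamma_W$, and then to analyze $B_{\mathbb{Z}[\pi]}$ by an explicit basis computation.

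Since $M$ is finitely generated projective, write $M$ as a direct summand of a free module $F=\mathbb{Z}[\pi]^n$. The natural $\pi$-equivariant isomorphism $\Gamma_W(A\oplus B)\cong\Gamma_W(A)\oplus\Gamma_W(B)\oplus(A\otimes_\mathbb{Z} B)$ recalled above (with diagonal $\pi$-action on the tensor factor), together with the splitting $F^\dagger\cong(\mathbb{Z}[\pi]^\dagger)^n$ and the corresponding decomposition of a hermitean form on $F^\dagger$ into diagonal hermitean pieces plus sesquilinear cross terms, shows that $B_F$ splits as a direct sum of copies of $B_{\mathbb{Z}[\pi]}$ and cross-term maps $\mathbb{Z}^w\otimes_{\mathbb{Z}[\pi]}(\mathbb{Z}[\pi]\otimes_\mathbb{Z}\mathbb{Z}[\pi])\to\mathbb{Z}[\pi]$. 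A direct calculation identifies each cross-term map with $[1\otimes(g\otimes h)]\mapsto w(g)g^{-1}h$, and this is an isomorphism for all $\pi$ and $w$ since $\mathbb{Z}[\pi]\otimes_\mathbb{Z}\mathbb{Z}[\pi]$ is free over the diagonal $\mathbb{Z}[\pi]$-action on the set $\{1\otimes k:k\in\pi\}$. Thus the surjectivity and injectivity of $B_M$ both reduce to the corresponding properties of $B_{\mathbb{Z}[\pi]}$.

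For $M=\mathbb{Z}[\pi]$, identify $M^\dagger$ with the set $\mathbb{Z}[\pi]$ via $n_x(m)=mx$. Because $n_x=\bar x\cdot n_1$ under the left action on $M^\dagger$, a hermitean pairing is determined by its value at $(n_1,n_1)$, yielding $Her_w(M^\dagger)\cong\{z\in\mathbb{Z}[\pi]:\bar z=z\}$; under this identification $B_{\mathbb{Z}[\pi]}(1\otimes\gamma(m))\leftrightarrow\bar m m$. The short exact sequence $\mathbb{Z}[\pi]\odot\mathbb{Z}[\pi]\to\Gamma_W(\mathbb{Z}[\pi])\to\mathbb{Z}[\pi]/2\mathbb{Z}[\pi]\to0$ (short exact since $\mathbb{Z}[\pi]$ is free abelian) exhibits $\Gamma_W(\mathbb{Z}[\pi])$ as a free abelian group with basis $\{\gamma(g):g\in\pi\}\cup\{s(g\odot h):g\neq h,\ \text{unordered}\}$. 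Computing the $w$-twisted coinvariants under the diagonal $\pi$-action orbit by orbit then yields one $\mathbb{Z}$-generator $[1\otimes\gamma(1)]$ (mapping to $1$); one $\mathbb{Z}$-generator $[1\otimes s(1\odot k)]$ for each orbit $\{k,k^{-1}\}$ with $k^2\neq 1$ (mapping to $k+\bar k$); and for each $k\in\pi$ of order exactly $2$ a single generator $[1\otimes s(1\odot k)]$ subject to $(1-w(k))[1\otimes s(1\odot k)]=0$, giving a free $\mathbb{Z}$-summand when $w(k)=1$ (mapping to $2k$) and a $\mathbb{Z}/2$-summand when $w(k)=-1$ (mapping to $k+\bar k=0$).

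Comparing position by position against the target finishes the proof. At the identity position and at each pair $\{k,k^{-1}\}$ with $k^2\neq 1$, $B$ restricts to an isomorphism of free rank-one summands. The sole obstructions arise at involutions $g\in\pi$: when $w(g)=1$ the target contains a free summand $\mathbb{Z}\cdot g$ but the image contains only $2g$, so $B$ fails to surject precisely when $\mathrm{Ker}(w)$ has an element of order $2$; when $w(g)=-1$ the target contribution is zero but $[1\otimes s(1\odot g)]$ is a nonzero $2$-torsion element of the source mapping to $0$, so $B$ fails to inject precisely when some $g\in\pi$ of order $2$ has $w(g)=-1$. The main technical point is the equivariant bookkeeping---correctly identifying the isotropy groups of the generators $s(g\odot h)$ of $\Gamma_W(\mathbb{Z}[\pi])$ under the diagonal action, combining the symmetry $s(g\odot h)=s(h\odot g)$ with the $w$-twist to extract precisely the $2$-torsion coming from involutions, and verifying that the direct-sum reduction propagates to arbitrary projective summands rather than only to free modules.
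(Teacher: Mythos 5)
Your proof is correct, but it is organized differently from the paper's. The paper keeps the rank arbitrary: it first shows $\mathrm{Ker}(B_M)\leq M\odot_\pi M$ by an ``evenness'' criterion ($\varepsilon_2(B_M(\mu)(n,n))=0$ for $\mu$ in the symmetric square, but not for $\eta_M(m)$ with $m\neq0$ mod $2$), then proves injectivity by writing the diagonal entries $r_{ii}$ with $r_{ii}+\bar r_{ii}=0$ as sums of terms $a(g-\bar g)$ (possible precisely when there is no orientation-reversing involution) and using $((g-\bar g)e_i)\odot e_i=0$, and proves surjectivity by reducing to ranks $1$ and $2$ and exhibiting explicit preimages, e.g.\ $\mu=\varepsilon_w[(b+\delta+\Sigma_{g\in F}g)e_1,e_1]+\delta\eta_M(e_1)$ for $h_{11}=2b+\delta+\Sigma_{g\in F}(g+\bar g)$. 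You instead reduce all the way to rank one by observing that the off-diagonal blocks of $B_F$ are the isomorphisms $[1\otimes(g\otimes h)]\mapsto\bar gh$, and then compute $\mathbb{Z}^w\otimes_{\mathbb{Z}[\pi]}\Gamma_W(\mathbb{Z}[\pi])$ and $Her_w(\mathbb{Z}[\pi]^\dagger)\cong\{z\mid\bar z=z\}$ completely, orbit by orbit; the only mismatches occur at involutions, exactly as the statement predicts. Your route is cleaner and proves more: it identifies $\mathrm{Ker}(B_{\mathbb{Z}[\pi]})$ and $\mathrm{Cok}(B_{\mathbb{Z}[\pi]})$ exactly as sums of $\mathbb{Z}/2\mathbb{Z}$'s indexed by the relevant involutions, so for free modules the hypotheses are necessary as well as sufficient (the paper only illustrates necessity by the example $\pi=Z/2Z$). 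What the paper's version buys is the explicit preimage formulas and the evenness criterion, which are reusable elsewhere, and a fully written-out passage from free to projective modules. That last step is the one place you are too brief: for injectivity you need that the split monomorphism $\Gamma_W(M)\to\Gamma_W(F)$ intertwines $B_M$ with $B_F$ via the (injective) extension-by-zero map $Her_w(M^\dagger)\to Her_w(F^\dagger)$, and for surjectivity you need the retraction $F\to M$ on sources together with $p\circ i=\mathrm{id}_M$ on targets; you flag this as a point to check but do not carry it out. It is routine and is exactly the paper's final paragraph, so I would not call it a gap, but it should be written down.
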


\begin{proof}
Since $M$ is a free abelian group there is a short exact sequence 
\[
0\to{M}\odot{M}\to\Gamma_W(M)\to{M/2M}\to0,
\]
and $\Gamma_W(M)$ is free as an abelian group.
This is a sequence of $\mathbb{Z}[\pi]$-modules and homomorphisms. 
Since $M$ is projective, $\mathbb{Z}^w\otimes_{\mathbb{Z}[\pi]}M$
is also free as an abelian group.
Hence the sequence
\[
0\to{M\odot_\pi{M}}\to\mathbb{Z}^w\otimes_{\mathbb{Z}[\pi]}\Gamma_W(M)\to
\mathbb{Z}^w\otimes_{\mathbb{Z}[\pi]}M/2M=\mathbb{F}_2\otimes_{\mathbb{Z}[\pi]}M\to0
\]
is also exact, since
$Tor^{\mathbb{Z}[\pi]}_1(\mathbb{Z}^w,M/2M)=
\mathrm{Ker}(2:\mathbb{Z}^w\otimes_{\mathbb{Z}[\pi]}M\to
\mathbb{Z}^w\otimes_{\mathbb{Z}[\pi]}M)=0$.
Let $\eta_M:M\to\mathbb{Z}^w\otimes_{\mathbb{Z}[\pi]}\Gamma_W(M)$ be the 
composite of $\gamma_M$ with the reduction from $\Gamma_W(M)$ to
$\mathbb{Z}^w\otimes_{\mathbb{Z}[\pi]}\Gamma_W(M)$.
Then the composite of $\eta_M$ with the projection to
$\mathbb{F}_2\otimes_{\mathbb{Z}[\pi]}M$ is the canonical epimorphism.
Let $[m\odot{n}]$ be the image of $m\odot{n}$ in $M\odot_\pi{M}$.

Suppose first that $M$ is a free $\mathbb{Z}[\pi]$-module,
with basis $e_1,\dots,e_r$, and let
$e_1^*,\dots,e_r^*$ be the dual basis for $M^\dagger$,
defined by $e_i^*(e_i)=1$ and $e_i^*(e_j)=0$ if $i\not=j$.
Since $[m\odot{gn}]=[g(g^{-1}m\odot{n})]=[\bar gm\circ{n}]$ in $M\odot_\pi{M}$,
the typical element of $M\odot_\pi{M}$ may be expressed in the form
$\mu=\Sigma_{i\leq j}(r_{ij}e_i)\odot{e_j}$.
For such an element 
\[
B_M(\mu)(e^*_k,e^*_l)=r_{kl},\quad\mathrm{for}~k<l,
\]
and
\[
B_M(\mu)(e^*_k,e^*_l)=r_{kk}+\bar r_{kk},\quad\mathrm{for}~k=l.
\] 
In particular, $B_M(\mu)$ is {\it even}:
if $\varepsilon_2:\mathbb{Z}[\pi]\to\mathbb{F}_2$ is the composite of the augmentation with
reduction {\it mod} $(2)$ then $\varepsilon_2(B_M(\mu)(n,n))=0$ for all $n\in
M^\dagger$.
If $m\in M$ has nontrivial image in $\mathbb{F}_2\otimes_{\mathbb{Z}[\pi]} M$
then $\varepsilon_2(e_i^*(m))\not=0$ for some $i\leq r$.
Hence $B_M(\eta_M(m))$ is not even, 
and it follows easily that $\mathrm{Ker}(B_M)\leq{M\odot_\pi{M}}$.
If $B_M(\mu)=0$, for some $\mu=\Sigma_{i\leq j}(r_{ij}e_i)\odot{e_j}$,
then $r_{kl}=0$, if $k<l$, and $r_{ii}+\bar r_{ii}=0$, for all $i$.

If $\pi$ has no orientation reversing element of order 2 
and $B_M(\mu)=0$, where $\mu=\Sigma_{i\leq j}(r_{ij}e_i)\odot{e_j}$,
then $r_{ii}=\Sigma_{g\in F(i)}a_{ig}(g-\bar g)$, 
where $F(i)$ is a finite subset of $\pi$, for $1\leq i\leq r$.
Since $((g-\bar g)e_i)\odot{e_i}=0$ it follows easily that $\mu=\Sigma(r_{ii}e_i)\odot{e_i}=0$.
Hence $B_M$ is injective.

To show that $B_M$ is surjective when
$\mathrm{Ker}(w)$ has no element of order $2$
it shall suffice to assume that $M$ has rank 1 or 2, 
since $h$ is determined by the values $h_{ij}=h(e_i^*,e_j^*)$.
Let $\varepsilon_w[m,m']$ be the image of $m\odot{m'}$ in
$\mathbb{Z}^w\otimes_{\mathbb{Z}[\pi]}\Gamma_W(M)$.
Then 
\[
B_M(\varepsilon_w[m,m'])(n,n')=\overline{n(m)}n'(m')+\overline{n(m')}n'(m),
\]
for all $m,m'\in M$ and $n,n'\in M^\dagger$.
Suppose first that $M$ has rank 1.
Since $h_{11}=\bar h_{11}$ and $\mathrm{Ker}(w)$ has no element of order $2$
we may write $h_{11}=2b+\delta+\Sigma_{g\in F}(g+\bar g)$,
where $b=\bar b$, $\delta=1$ or 0 and $F$ is a finite subset of $\pi$.
Let 
\[
\mu=\varepsilon_w[(b+\delta+\Sigma_{g\in F}g)e_1,e_1]+\delta\eta_M(e_1).
\]
Then $B_M(\mu)(e_1^*,e_1^*)=h_{11}$.
If $M$ has rank 2 and $h_{11}=h_{22}=0$
let $\mu=\varepsilon_w[h_{12}e_1,e_2]$.
Then $B_M(\mu)(e_i^*,e_j^*)=h_{ij}$.
In each case $B_M(\mu)=h$, 
since each side of the equation is a $w$-hermitian pairing on $M^\dagger$.

Now suppose that $M$ is projective,
and that $P$ is a finitely generated projective complement to $M$, 
so that $M\oplus P\cong\mathbb{Z}[\pi]^r$ for some $r\geq0$.
The inclusion of $M$ into the direct sum induces a split monomorphism from 
$\Gamma_W(M)$ to $\Gamma_W(\mathbb{Z}[\pi]^r)$ which is clearly compatible with 
$B_M$ and $B_{\mathbb{Z}[\pi]^r}$.
We may extend an hermitian pairing $h$ on $M^\dagger$ 
to a pairing $h_1$ on $M^\dagger\oplus P^\dagger$ by setting 
$h_1(n,p)=h_1(p',p)=0$ for all $n\in M^\dagger$ and 
$p,p'\in P^\dagger$.
Clearly $h_1|_{M\times{M}}=h$ and so this extension determines a split
monomorphism from $Her_w(M^\dagger)$ to $Her_w((\mathbb{Z}[\pi]^r)^\dagger)$.
If $h_1=B_{\mathbb{Z}[\pi]^r}(\theta)$ then $h=B_M(\theta_M)$,
where $\theta_M$ is the image of $\theta$ under the homomorphism
induced by the projection from $M\oplus P$ onto $M$.
Thus if $B_{\mathbb{Z}[\pi]^r}$ is a monomorphism or an epimorphism 
so is $B_M$. 
\end{proof}

In particular, if $\pi$ has no 2-torsion then $B_M$ is an isomorphism, for any
projective $\mathbb{Z}[\pi]$-module $M$.
The restriction on 2-torsion is necessary, as can be seen by considering
the group $G=Z/2Z=\langle g\mid g^2\rangle$ with $w$ trivial and 
$h$ the pairing on $M=\mathbb{Z}[G]$ determined by $h(m,n)=mg\bar n$.

Let $E$ be another left $\mathbb{Z}[\pi]$-module.
The summand $M\otimes{E}$ of $\Gamma_W(M\oplus{E})$
has the diagonal left $\mathbb{Z}[\pi]$-module structure.
Let $d:M\to M^{\dagger\dagger}$ and 
$t:\mathbb{Z}\otimes_{\mathbb{Z}[\pi]}(M\otimes{E})\to
{Hom}_{\mathbb{Z}[\pi]}(M,E)$ 
be given by $d(m)(\mu)=\overline{\mu(m)}$ and $t(\mu\otimes{e})(m)=\mu(m)e$,
for all $m\in M$, $\mu\in M^\dagger$ and $e\in{E}$.
If $M$ is finitely generated and projective these functions
are isomorphisms (of left $\mathbb{Z}[\pi]$-modules
and abelian groups, respectively).
Let $\widetilde{B_M}(\gamma)$ be the adjoint of $B_M(1\otimes\gamma)$,
for all $\gamma\in\Gamma_W(M)$.

\begin{lemma}
\label{alphatheta}
Let $M$ be a finitely generated projective $\mathbb{Z}[\pi]$-module
and $\theta:M\to E$ be a $\mathbb{Z}[\pi]$-module homomorphism.
Let $\alpha_\theta(m,e)=(m,e+\theta(m))$ for all $(m,e)\in\Pi=M\oplus{E}$,
and let $d:M\to M^{\dagger\dagger}$ and 
$t:\mathbb{Z}\otimes_{\mathbb{Z}[\pi]}(M\otimes{E})\to
{Hom}_{\mathbb{Z}[\pi]}(M,E)$ be the isomorphisms defined above.
Then $\alpha_\theta$ is an automorphism of $\Pi$ and 
\[
\Gamma_W(\alpha_\theta)(\gamma)-\gamma\equiv
(d\otimes1)^{-1}[(\widetilde{B_M}(\gamma)\otimes1)(t^{-1}(\theta))]
\quad{mod}\quad \Gamma_W(E),
\]
for all $\gamma\in\Gamma_W(M)$.
\end{lemma}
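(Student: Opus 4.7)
The first assertion is immediate: $\alpha_{-\theta}$ is a two-sided inverse for $\alpha_\theta$, so $\alpha_\theta$ is a $\mathbb{Z}[\pi]$-module automorphism of $\Pi$.

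For the congruence, my plan is to exploit the natural splitting recalled earlier in this section,
\[
\Gamma_W(\Pi)\cong\Gamma_W(M)\oplus\Gamma_W(E)\oplus(M\otimes E),
\]
under which the universal quadratic function decomposes as $\gamma_\Pi(m,e)=\gamma_M(m)+\gamma_E(e)+m\otimes e$, the cross-term arising as $s((m,0)\odot(0,e))$. Identifying $\gamma_M(m)$ with $\gamma_\Pi(m,0)$ and using naturality of $\gamma$,
\[
\Gamma_W(\alpha_\theta)(\gamma_M(m))=\gamma_\Pi(\alpha_\theta(m,0))=\gamma_\Pi(m,\theta(m))=\gamma_M(m)+\gamma_E(\theta(m))+m\otimes\theta(m).
\]
Hence $\Gamma_W(\alpha_\theta)(\gamma_M(m))-\gamma_M(m)\equiv m\otimes\theta(m)$ modulo $\Gamma_W(E)$.

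I would then reduce the general case to this one: both sides of the claimed congruence are additive in $\gamma\in\Gamma_W(M)$, the left side as a difference of abelian group homomorphisms, and the right side because $\widetilde{B_M}$ depends linearly on $\gamma$. Since $\Gamma_W(M)$ is generated as an abelian group by the elements $\gamma_M(m)$ for $m\in M$ (the relators $s(a\odot b)=\gamma_M(a+b)-\gamma_M(a)-\gamma_M(b)$ being themselves differences of such), it suffices to verify the identity when $\gamma=\gamma_M(m)$.

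The final step is to identify the cross-term $m\otimes\theta(m)$ with the prescribed right-hand side. From $B_M(1\otimes\gamma_M(m))(n,n')=\overline{n(m)}\,n'(m)$, the adjoint $\widetilde{B_M}(\gamma_M(m)):M^\dagger\to M^{\dagger\dagger}$ sends $n'$ to $n'(m)\,d(m)$. Embedding $M$ as a direct summand of a finitely generated free module with a dual basis (as in the proof of Theorem \ref{wh-herm}) and expressing $t^{-1}(\theta)$ in these coordinates, a direct unpacking yields that $(d\otimes 1)^{-1}[(\widetilde{B_M}(\gamma_M(m))\otimes 1)(t^{-1}(\theta))]$ returns the class of $m\otimes\theta(m)$. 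The main obstacle is precisely this bookkeeping: keeping track of the identifications $M\cong M^{\dagger\dagger}$ under $d$ and of the dual tensor factor under $t$, and confirming that the Whitehead cross-term matches the tensor-dual composite. The reduction to the free case (using the naturality of all the constructions with respect to inclusions of direct summands, as at the end of the proof of Theorem \ref{wh-herm}) ensures that this finitely-free calculation is sufficient.
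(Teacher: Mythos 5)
Your argument is correct and follows essentially the same route as the paper: both rest on the formula $\Gamma_W(\alpha_\theta)(\gamma_\Pi(m))=\gamma_\Pi(m)+\gamma_\Pi(\theta(m))+m\otimes\theta(m)$ (which you derive from the splitting rather than citing Baues), reduce from general $\gamma$ to the generators $\gamma_M(m)$ (your additivity-on-generators argument is equivalent to the paper's appeal to $\mathbb{Z}$-quadraticity in $m$), and then match the cross-term $m\otimes\theta(m)$ with $(d\otimes1)^{-1}[(\widetilde{B_M}(\gamma_M(m))\otimes1)(t^{-1}(\theta))]$ via the adjoint formula. The only inessential difference is your final free-module embedding, which the paper avoids by simply writing $\theta=t(\Sigma\mu_i\otimes e_i)$ and pushing the scalars across the tensor product over $\mathbb{Z}[\pi]$.
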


\begin{proof}
The homomorphism $\alpha_\theta$ is clearly an automorphism of $\Pi$ 
which restricts to the identity on the summands $E$ and $M$,
and 
\[
\Gamma_W(\alpha_\theta)(\gamma_\Pi(m))
=\gamma_\Pi(m)+\gamma_\Pi(\theta(m))+{m\otimes\theta(m)},
\]
for all $m\in{M}.$
(See (1.2.7) on page 16 of \cite{Ba'}.)

Let $\beta_m=B_M(1\otimes\gamma_M(m))$, for $m\in{M}$.
Now the adjoint homomorphism $\widetilde{\beta_m}$
is given by $\widetilde{\beta_m}(\mu)=\overline{\mu(m)}d(m)$.
Since $t$ is surjective we have $\theta=t(\Sigma\mu_i\otimes e_i)$,
for some $\mu_i\in M^\dagger$ and $e_i\in E$.
Then 
$(\widetilde{\beta_m}\otimes1)(t^{-1}(\theta))=$
\[\Sigma\widetilde{\beta_m}(\mu_i)\otimes{e_i}=
\Sigma{d(m)}\otimes\mu_i(m){e_i}=
d(m)\otimes\theta(m)=(d\otimes1)(m\otimes\theta(m)).
\]
Since $\Gamma_W(\alpha_\theta)(\gamma_\Pi(m))-\gamma_\Pi(m)\equiv
(d\otimes1)^{-1}[(\widetilde{\beta_m}\otimes1)(t^{-1}(\theta))]$ 
{\it mod} $\Gamma_W(E)$, for all $m\in{M}$, 
and each side is quadratic in $m$, 
we have 
\[
\Gamma_W(\alpha_\theta)(\gamma)-\gamma\equiv
(d\otimes1)^{-1}[(\widetilde{B_M}(\gamma)\otimes1)(t^{-1}(\theta))]\quad {\mathit mod}
\quad\Gamma_W(E),
\]
for all $\gamma\in\Gamma_W(M).$
\end{proof}

\section{postnikov stages}

Let $X$ be a based, connected cell complex with fundamental group $\pi$,
and let $p_X:\widetilde{X}\to{X}$ be its universal covering projection.
Let $E_0(X)$ be the group of 
based homotopy classes of based self-homotopy equivalences of $X$,
and $E_\pi(X)$ be the subgroup which induces the identity on $\pi$.
If we fix a basepoint for $\widetilde{X}$ over the basepoint of $X$ 
then there are well-defined Hurewicz homomorphisms
$hwz_q:\pi_q(X)=\pi_q(\widetilde{X})\to{H_q(\widetilde{X};\mathbb{Z})}$, 
for all $q\geq2$.

Let $f_{X,k}:X\to{P_k(X)}$ be the $k^{th}$ stage of the Postnikov tower for $X$.
We may construct $P_k(X)$ by adjoining cells of dimension at least $k+2$
to kill the higher homotopy groups of $X$.
The map $f_{X,k}$ is then given by the inclusion of $X$ into $P_k(X)$, 
and is a $(k+1)$-connected map.
In particular, $P_1(X)\simeq{K=K(\pi,1)}$ and $c_X=f_{X,1}$ is the 
classifying map for the fundamental group $\pi=\pi_1(X)$.

If $M$ is a left $\mathbb{Z}[\pi]$-module let $L_\pi(M,n)$ be the 
{\it generalized Eilenberg-Mac Lane space\/} 
over $K=K(\pi,1)$ realizing the given action of $\pi$ on $M$.
Thus the classifying map for $L=L_\pi(M,n)$ is a principal $K(M,n)$-fibration 
with a section $\sigma:K\to{L}$.
Let $[X;Y]_K$ be the set of homotopy classes over $K=K(\pi,1)$ 
of maps $f:X\to{Y}$ such that $c_X=c_Yf$.
(These may also be considered as
$\pi$-equivariant homotopy classes of $\pi$-equivariant maps
from $\widetilde{K}$ to $\widetilde{L}$.)
We may view $L_\pi(M,n)$ as the $ex$-$K$ loop space 
$\overline\Omega{{L_\pi}(M,n+1)}$, 
with section $\sigma$ and projection $c_L$.
Let $\mu:L\times_KL\to {L}$ be the (fibrewise) loop multiplication.
Then $\mu(id_L,\sigma{c_L})=\mu(\sigma{c_L},id_L)=id_L$ in $[L;L]_K$.
Let $\iota_{M,n}\in{H^n}(L;M)$ be the characteristic element.
The function $\theta:[X,L]_K\to{H^n}(X;M)$ given by $\theta(f)=f^*\iota_{M,n}$ 
is a isomorphism with respect to the addition on $[X,L]_K$ determined by $\mu$.
Thus $\theta(id_L)=\iota_{M,n}$,
$\theta(\sigma{c_X})=0$ and $\theta(\mu(f,f'))=\theta(f)+\theta(f')$.
(See \S{V.2} of \cite{Ba0}.)

Let $k_1(X)\in{H}^3(\pi;\pi_2(X))$ be the first $k$-invariant
and $f_X=f_{X,2}$ be the second stage of the Postnikov tower for $X$.
The {\it algebraic $2$-type} $[\pi,\pi_2(X),k_1(X)]$ and the
Postnikov 2-stage determine each other.
More precisely,  $P_2(X)\simeq{P_2(Y)}$ if and only if there are isomorphisms
$\alpha:\pi\cong\pi_1(Y)$ and $\beta:\pi_2(X)\cong\pi_2(Y)$ such that
$\beta$ is $\alpha$-semilinear and $\alpha^*k_1(Y)=\beta_\#k_1(X)$ in $H^3(\pi;\pi_2(Y))$.
Moreover, 
\[
k_1(X)=0~\Leftrightarrow~c_{P_2(X)}~\mathrm{has~a~section}~
\Leftrightarrow~P_2(X)\simeq{L}_\pi(\pi_2(X),2).
\]
Let $L=L_\pi(M,2)$.
Then $E_\pi(L)$ is the group of units of $[L,L]_K$ with respect to composition.
We shall use the following special case of a result of Tsukiyama \cite{Tsu}; 
we give only the part that we need below.

\begin{lemma}
\label{tsuki}
There is an exact sequence
$0\to{H^2}(\pi;M)\to{E_\pi(L)}\to{Aut(M)}\to0.$
\end{lemma}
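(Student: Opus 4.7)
The plan is to define $\rho : E_\pi(L) \to Aut(M)$ by $f \mapsto \pi_2(f)$, then show $\rho$ is a surjective group homomorphism whose kernel can be naturally identified with $H^2(\pi;M)$.

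Since $f \in E_\pi(L)$ fixes $\pi_1 = \pi$, obstruction theory allows us to homotope $f$ to a map over $K$; then $\pi_2(f)$ is a $\mathbb{Z}[\pi]$-automorphism of $M$, and $\rho$ is visibly a homomorphism under composition. Surjectivity of $\rho$ follows from the functoriality of the generalized Eilenberg--Mac Lane construction: any $\alpha \in Aut(M)$ determines a map $L_\pi(\alpha,2) : L \to L$ over $K$ which induces $\alpha$ on $\pi_2$ and the identity on $\pi_1$, hence is a homotopy equivalence by Whitehead's theorem.

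To identify the kernel, I would use the isomorphism $\theta : [L;L]_K \to H^2(L;M)$ stated in the excerpt together with the Serre spectral sequence of the fibration $K(M,2) \to L \to K$ with local coefficients in $M$. Because $H^1(K(M,2);M) = 0$ and the section $\sigma : K \to L$ splits $\sigma^*$, one obtains a canonical direct sum decomposition $H^2(L;M) \cong H^2(\pi;M) \oplus Hom_{\mathbb{Z}[\pi]}(M,M)$ in which the first summand is $c_L^* H^2(\pi;M)$ and the second projection is restriction to the fiber, i.e. the $\pi_2$--map. Since $\theta(id_L) = \iota$ corresponds to $(0, id_M)$, an element $f$ lies in $\ker(\rho)$ exactly when $\theta(f) = \iota + c_L^*(a)$ for a unique $a \in H^2(\pi;M)$, giving a bijection $\ker(\rho) \leftrightarrow H^2(\pi;M)$.

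The last step is to check that this bijection is a homomorphism. Given $f_1, f_2 \in \ker(\rho)$ with $\theta(f_i) = \iota + c_L^*(a_i)$, the identity $c_L \circ f_2 = c_L$ yields
\[
\theta(f_1 \circ f_2) = f_2^* \theta(f_1) = f_2^* \iota + f_2^* c_L^*(a_1) = \theta(f_2) + c_L^*(a_1) = \iota + c_L^*(a_1 + a_2),
\]
so composition on $\ker(\rho)$ corresponds to addition on $H^2(\pi;M)$, as desired. The main obstacle is precisely this bookkeeping of the two natural structures on $[L;L]_K$: the additive splitting provided by $\theta$ is with respect to the fibrewise loop multiplication $\mu$, whereas the group $E_\pi(L)$ lives under composition, and one must verify that these structures agree on the subgroup of maps inducing $id_M$ on $\pi_2$.
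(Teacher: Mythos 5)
Your proof is correct, but it is organized quite differently from the paper's. The paper's argument runs in the opposite direction: it constructs, for each $\phi\in H^2(\pi;M)$, an explicit self-map $h_\phi=\mu(s_\phi c_L,id_L)$ using the fibrewise loop multiplication $\mu$ on $L=\overline\Omega L_\pi(M,3)$, verifies $h_{\phi+\psi}=h_\phi h_\psi$ by homotopy associativity of $\mu$, and checks that $h_\phi$ acts trivially on $\pi_2$; it then simply cites Tsukiyama for the remaining exactness (that $h$ is injective with image exactly the kernel of the action on $M$, and that the action map is onto $Aut(M)$). You instead start from $\rho(f)=\pi_2(f)$, identify $\ker\rho$ cohomologically via $\theta(f)=f^*\iota_{M,2}$ and the splitting $H^2(L;M)\cong H^2(\pi;M)\oplus Hom_{\mathbb{Z}[\pi]}(M,M)$, and verify the group law by the naturality computation $\theta(f_1\circ f_2)=f_2^*\theta(f_1)=\iota+c_L^*(a_1+a_2)$ — which is essentially the contravariant shadow of the paper's identity $h_{\phi+\psi}=h_\phi h_\psi$. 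Your route proves more of the exact sequence than the paper's own proof does without outsourcing to \cite{Tsu}; what it does not produce is the explicit family $h_\phi$, which the paper needs later (the maps $h_\phi$ and their action on $H^4(L;\Gamma_W(\Pi))$ are the engine of Theorem \ref{min}). Two small points to make airtight: the surjectivity of the restriction-to-fibre map onto $Hom_{\mathbb{Z}[\pi]}(M,M)$ is not automatic from the spectral sequence alone for a general $2$-stage Postnikov system (the obstruction is the $k$-invariant), but is fine here precisely because $L=L_\pi(M,2)$ is split and $L_\pi(-,2)$ is functorial, as you use for surjectivity of $\rho$; and you should note that any $f\in[L,L]_K$ with $\theta(f)=\iota+c_L^*(a)$ induces the identity on $\pi_1$ and $\pi_2$ and hence is automatically a homotopy equivalence by Whitehead, so that the whole coset $\iota+c_L^*H^2(\pi;M)$ really lies in $E_\pi(L)$.
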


\begin{proof}
Let $\theta:[K,L]_K\to{H^2}(\pi;M)$ be the isomorphism given by
$\theta(s)=s^*\iota_{M,2}$, 
and let $\theta^{-1}(\phi)=s_\phi$ for $\phi\in{H^2}(\pi;M)$.
Then $s_\phi$ is a homotopy class of sections of $c_L$, 
$s_0=\sigma$ and $s_{\phi+\psi}=\mu(s_\phi,s_\psi)$,
while $\phi=s_\phi^*\iota_{M,2}$.
(Recall that $\mu:L\times_KL\to {L}$ is the fibrewise loop multiplication.)

Let $h_\phi=\mu(s_\phi{c_L},id_L)$.
Then $c_Lh_\phi=c_L$ and so $h_\phi\in[L;L]_K$.
Clearly $h_0=\mu(\sigma{c_L},id_L)=id_L$
and $h_\phi^*\iota_{M,2}=\iota_{M,2}+c_L^*\phi\in{H^2}(L;M)$.
We also see that
\begin{align*}
h_{\phi+\psi}&=\mu(\mu(s_\phi,s_\psi){c_L},id_L)\\
&=\mu(\mu(s_\phi{c_L},s_\psi{c_L}),id_L)\\
&=\mu(s_\phi{c_L},\mu(s_\psi{c_L},id_L))
\end{align*}
(by homotopy associativity of $\mu$) and so
\[h_{\phi+\psi}=\mu(s_\phi{c_L},h_\psi)=
\mu(s_\phi{c_L}h_\psi,h_\psi)=h_\phi{h_\psi}.\]
Therefore $h_\phi$ is a homotopy equivalence for all 
$\phi\in{H^2}(\pi;M)$, 
and $\phi\mapsto{h_\phi}$ defines a homomorphism from
${H^2}(\pi;M)$ to ${E_\pi(L)}$.

The lift of $h_\phi$ to the universal cover $\widetilde{L}$ is 
(non-equivariantly) homotopic to the identity, 
since the lift of $c_L$ is (non-equivariantly) homotopic to a constant map.
Therefore $h_\phi$ acts as the identity on $M=\pi_2(L)$.
\end{proof}

The homomorphism $h:\phi\mapsto{h_\phi}$ is in fact an isomorphism 
onto the kernel of the action of ${E_\pi(L)}$ on $M$ \cite{Tsu},
and the extension splits: 
$E_\pi(L)$ is isomorphic to a semidirect product $H^2(\pi;M)\rtimes{Aut}(M)$.
(See Corollary 8.2.7 of \cite{Ba}.)
More  generally, if $P=P_2(X)$, $\Pi=\pi_2(X)$ and $H$ is the subgroup of
$Aut_\pi(\Pi)\rtimes{Aut(\pi)}$ which fixes $k_1(X)\in{H^3}(\pi;\Pi)$ then
\[
E_0(P)\cong{H^2(\pi;\Pi)}\rtimes{H}
\]
(see part II of \cite{Ru92}).
Thus if $P=L_\pi(\Pi)$ every automorphism of $\pi$ 
lifts to a self-homotopy equivalence of $L$,
and $E_0(L)\cong{E_\pi(L)}\rtimes{Aut(\pi)}$.

Let $X^{[k]}$ be the $k$-skeleton of $X$, for all $k\geq0$, and let $\Pi=\pi_2(X)$.
The image of $\pi_3(X^{[2]})$ in $\pi_3(X^{[3]})$ is isomorphic to  $\Gamma_W(\Pi)$, 
and the inclusion of the 3-skeleton induces a homomorphism 
$\iota_X:\Gamma_W(\Pi)\to\pi_3(X)$.
The composite of $\iota_X$ with the natural map from $\Pi\odot\Pi$ to $\Gamma_W(\Pi)$ 
is the Whitehead product $[-,-]$,
and there is a natural {\it Whitehead exact sequence\/} of abelian groups
\begin{equation*}
\begin{CD}
\pi_4(X)@> hwz_4>>H_4(\widetilde{X};\mathbb{Z})@>b_X>>\Gamma_W(\Pi)
@>\iota_X>>
\pi_3(X)@> hwz_3>>H_3(\widetilde{X};\mathbb{Z})\to0,
\end{CD}
\end{equation*}
where $b_X$ is the {\it secondary boundary homomorphism}  \cite{Wh50}.
(See (2.1.17) of \cite{Ba'}.)
This is an exact sequence of  left $\mathbb{Z}[\pi]$-modules, by naturality.
(Note also that the Whitehead sequence for $K(\Pi,2)$ gives 
$H_4(\Pi,2;\mathbb{Z})\cong\Gamma_W(\Pi)$. )
 
The homology  spectral sequence for $P_3(\widetilde{X})$
as a fibration over $K(\Pi,2)$ with fibre $K(\pi_3(X),3)$
gives an exact sequence
\begin{equation*}
\begin{CD}
0\to{H_4(P_3(\widetilde{X});\mathbb{Z})}\to{H_4(\Pi,2;\mathbb{Z})}@>d^2_{4,0}>>
{H_3(\pi_3(X),3;\mathbb{Z})}\to
H_3(P_3(\widetilde{X});\mathbb{Z})\to0,
\end{CD}
\end{equation*}
in which $d^2_{4,0}$ is  the homology transgression.
Composing $d^2_{4,0}$ with the inverse of the Hurewicz isomorphism 
$hwz_3$ for $K(\pi_3(X),3)$ gives the image of 
the second $k$-invariant $k_2(\widetilde{X})\in{H^4(\Pi,2;\pi_3(X))}$ 
in $Hom(H_4(\Pi,2;\mathbb{Z}),\pi_3(X) )$ under the evaluation homomorphism,
by the interpretation of $k$-invariants 
in terms of transgression \cite{Na}.
In fact $d^2_{4,0}=hwz_3\iota_X$, by Theorem 2.5.10 of \cite{Ba'}.

\section{$PD_4$-complexes and intersection pairings}

Let $X$ be a based finitely dominated cell complex, 
with the natural left $\mathbb{Z}[\pi]$-module structure.
The equivariant cellular chain complex $C_*=C_*(X;\mathbb{Z}[\pi])$
of $\widetilde{X}$ is a complex of left $\mathbb{Z}[\pi]$-modules, 
and is $\mathbb{Z}[\pi]$-chain homotopy equivalent to 
a finitely generated complex of projective modules.
Let $C^q=Hom_{\mathbb{Z}[\pi]}(C_q,\mathbb{Z}[\pi])$, for all $q\geq0$,
and let $\Pi={H_2(\widetilde{X};\mathbb{Z})}=H_2(C_*)$.
Recall that the choice of a basepoint for $\widetilde{X}$
determines an isomorphism $\pi_2(X)\cong\Pi$.

Let $ev:\overline{H^2(X;\mathbb{Z}[\pi])}\to\Pi^\dagger$ 
be the evaluation homomorphism,
given by 
\[
ev([c])([z])={[c]\cap[z]}=c(z)\quad\forall~c\in{C^2}~\mathrm{and}~z\in{C_2}.
\]
This homomorphism sits in the {\it evaluation} exact sequence 
\begin{equation*}
\begin{CD}
0\to{E^2\mathbb{Z}}\to\overline{H^2(X;\mathbb{Z}[\pi])}@> 
ev >>\Pi^\dagger\to{E^3\mathbb{Z}}\to
\overline{H^3(X;\mathbb{Z}[\pi])}.
\end{CD}
\end{equation*}
(See Lemma 3.3 of \cite{Hi}.)
If $X$ is a $PD_4$-complex then $H^3(X;\mathbb{Z}[\pi])=H_1(\widetilde{X};\mathbb{Z})=0$,
and the evaluation sequence is a 4-term exact sequence.

We assume henceforth that $X$ is a $PD_4$-complex,
with orientation character $w=w_1(X)$.
Let $X^+$ be the orientable covering space associated to
$\pi^+=\mathrm{Ker}(w)$.
The complex $X$ is finitely dominated and is homotopy equivalent to 
$X_o\cup_\phi{e^4}$, 
where $X_o$ is a complex of dimension at most 3 and $\phi\in\pi_3(X_o)$
\cite{Wa}.
In particular, $\pi$ is finitely presentable.
In \cite{Hi04a} and \cite{Hi04b} cellular decompositions were used
to study the homotopy types of $PD_4$-complexes.
Here we shall rely more consistently on the dual Postnikov approach.

\begin{lemma}
\label{post3}
If $\pi$ is infinite the homotopy type of $X$ is determined by $P_3(X)$.
\end{lemma}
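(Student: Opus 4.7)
The plan is an obstruction-theoretic lifting argument followed by a Whitehead-sequence analysis on universal covers. Suppose $Y$ is another $PD_4$-complex and $h\colon P_3(X)\simeq P_3(Y)$ is a homotopy equivalence; I will construct a homotopy equivalence $X\to Y$.

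First, turn $f_{Y,3}\colon Y\to P_3(Y)$ into a fibration with fiber $F$. Since $f_{Y,3}$ is $4$-connected, $\pi_i(F)=0$ for $i\le 3$. The standard obstructions to lifting $h\circ f_{X,3}\colon X\to P_3(Y)$ through $f_{Y,3}$ lie in $H^{n+1}(X;\pi_n(F))$: they vanish for $n\le 3$ because the coefficient is zero, and for $n\ge 4$ because $X\simeq X_o\cup_\phi e^4$ has cell dimension at most $4$, so $H^{n+1}(X;-)=0$ whenever $n+1\ge 5$. A lift $g\colon X\to Y$ therefore exists, and the relation $f_{Y,3}\circ g\simeq h\circ f_{X,3}$ combined with the fact that $f_{X,3}$, $h$, $f_{Y,3}$ each induce isomorphisms on $\pi_i$ for $i\le 3$ forces $g$ to do the same.

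Next I would show that the induced map $\tilde g\colon\widetilde X\to\widetilde Y$ on universal covers is a homotopy equivalence. Both spaces are simply-connected $4$-dimensional CW complexes, and $\tilde g$ induces isomorphisms on $\pi_2$ and $\pi_3$. Here the hypothesis that $\pi$ is infinite is used: by Poincar\'e duality applied to $X$ (and to $Y$), $H_4(\widetilde X;\mathbb{Z})=H_4(\widetilde Y;\mathbb{Z})=0$, because when $\pi$ is infinite there is no nonzero $\pi$-invariant element of $\overline{\mathbb{Z}[\pi]}$. Feeding this vanishing into the Whitehead exact sequence of \S3 collapses it to
\[
0\to\Gamma_W(\Pi)\to\pi_3(X)\to H_3(\widetilde X;\mathbb{Z})\to 0,
\]
and analogously for $Y$. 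Naturality of the sequence, together with the induced isomorphisms on $\Pi$ and $\pi_3$, then gives an isomorphism on $H_3$. Relative Hurewicz (valid by simple connectedness) gives $H_i(\tilde g)=0$ for $i\le 3$; the long exact sequence of the pair, together with $H_4(\widetilde X)=H_4(\widetilde Y)=0$ and the $H_3$-isomorphism, forces $H_4(\tilde g)=0$; and $H_i(\tilde g)=0$ for $i\ge 5$ by cellular dimension. Hence $\tilde g$ is a homology isomorphism between simply-connected spaces, so a homotopy equivalence by Whitehead's theorem, and $g$ is a homotopy equivalence as well.

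The main obstacle is producing the vanishing $H_4(\widetilde X;\mathbb{Z})=0$ cleanly from the infiniteness of $\pi$; this is the single input that carries all the weight of the hypothesis, since without it the Whitehead sequence does not collapse and $H_3(\widetilde X;\mathbb{Z})$ is no longer controlled by $\pi_3$ and $\Pi$ alone, as one would expect from the failure of the conclusion when $\pi$ is finite.
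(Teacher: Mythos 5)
Your proposal is correct and follows essentially the same route as the paper: realize the equivalence of Postnikov $3$-stages by a map $g\colon X\to Y$ (the paper gets $g$ by cellular approximation, since $P_3(Y)$ is built from $Y$ by attaching cells of dimension $\geq5$, rather than by your obstruction-theoretic lift, but the two are interchangeable here), use the infiniteness of $\pi$ to kill $H_4$ of the universal covers, and conclude by Hurewicz and Whitehead. Your use of the collapsed Whitehead sequence to nail down the isomorphism on $H_3(\widetilde{X};\mathbb{Z})$ is a welcome expansion of a step the paper compresses into a single citation of ``the Hurewicz and Whitehead theorems.''
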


\begin{proof}
If $X$ and $Y$ are two such $PD_4$-complexes and $h:P_3(X)\to{P_3(Y)}$ 
is a homotopy equivalence then $hf_{X,3}$ is homotopic to a map $g:X\to{Y}$.
Since $\pi$ is infinite 
$H_4(\widetilde{X};\mathbb{Z})=H_4(\widetilde{Y};\mathbb{Z})=0$,
by Poincar\'e duality.
Since $\pi_i(g)$ is is an isomorphism for $i\leq3$ 
any lift ${\tilde{g}:\widetilde{X}\to\widetilde{Y}}$ 
is a homotopy equivalence, by the Hurewicz and Whitehead theorems, 
and so $g$ is a homotopy equivalence.
\end{proof}

In particular, if $\pi$ is torsion free but not free
then $H_3(X;\mathbb{Z}[\pi])\cong{E^1}\mathbb{Z}$ 
is a free $\mathbb{Z}[\pi]$-module, by Lemma \ref{freeE^1}, 
and so $\pi_3(X)\cong\Gamma_W(\Pi)\oplus{E^1\mathbb{Z}}$.
Hence the homotopy type of $X$ is determined by
$\pi,w,\Pi$ and the first two $k$-invariants.
Note that the first $k$-invariant $k_1(X)$ may be defined 
as the primary obstruction to constructing a left inverse 
to the classifying map $c_X$.
The Homotopy Addition Theorem (see Theorem IV.6.1 of \cite{Wh} or
Proposition 7.5.3 of \cite{Spn}) may be used to 
identify $k_1(X)$ with the class in $Ext^3_{\mathbb{Z}[\pi])}(\mathbb{Z},\Pi)$
of the iterated extension
\[
0\to\pi_2(X)\to{C_2/\partial{C_3}}\to{C_1}\to{C_0}\to\mathbb{Z}\to0.
\]

Let $H=\overline{H^2(X;\mathbb{Z}[\pi])}$.
A choice of generator $[X]$ for $H_4(X;\mathbb{Z}^w)\cong\mathbb{Z}$
determines a Poincar\'e duality isomorphism $D:{H}\to\Pi$
by $D(u)=u\cap[X]$, for all $u\in{H}$.
Moreover $H^3(X;\mathbb{Z}[\pi])=0$.
The {\it cohomology intersection pairing} $\lambda:H\times H\to\mathbb{Z}[\pi]$ 
is defined by $\lambda(u,v)=ev(v)(D(u))$, for all $u,v\in{H}$. 
This pairing is $w$-hermitian: $\lambda(gu,hv)=g\lambda(u,v)\bar{h}$ and
$\lambda(v,u)=\overline{\lambda(u,v)}$ for all $u,v\in{H}$ and $g,h\in\pi$.
If $X$ is a closed 4-manifold this pairing is equivalent under Poincar\'e 
duality to the equivariant intersection pairing on $\Pi$.
(See page 82 of \cite{Ra}.)
Replacing $[X]$ by $-[X]$ changes the sign of the pairing.
Since $\lambda(u,e)=0$ for all $u\in{H}$ and $e\in E=E^2\mathbb{Z}$ 
the pairing $\lambda$ induces a pairing 
\[
\lambda_X:H/E\times{H/E}\to\mathbb{Z}[\pi].
\]
The adjoint $\widetilde{\lambda_X}$ is a monomorphism,
since $\mathrm{Ker}(ev)=E$.
The $PD_4$-complex $X$ is {\it strongly minimal\/} if $\lambda_X=0$.

The next lemma relates nonsingularity of $\lambda_X$,
projectivity of $\Pi$ and $H/E$ and conditions on $E^2\mathbb{Z}$ 
and $E^3\mathbb{Z}$.

\begin{lemma}
\label{lambdaE}
Let $X$ be a $PD_4$-complex with fundamental group $\pi$, and let
$E=E^2\mathbb{Z}$, $H=\overline{H^2(X;\mathbb{Z}[\pi])}$ and $\Pi=\pi_2(X)$.
Then

\begin{enumerate}
\item $\lambda_X=0$ if and only if $H=E$, 
and then $E^3\mathbb{Z}\cong{E^\dagger}$;

\item if $\lambda_X$ is nonsingular and
$H/E$ is a projective $\mathbb{Z}[\pi]$-module then
$E^3\mathbb{Z}\cong{E^\dagger}$;

\item if $\lambda_X$ is nonsingular and $E^\dagger=0$ then $E^3\mathbb{Z}=0$;

\item if $E^3\mathbb{Z}=0$ then $\lambda_X$ is nonsingular;

\item if $E^3\mathbb{Z}=0$ and $\Pi$ is a projective $\mathbb{Z}[\pi]$-module 
then $E=0$;

\item if $\pi=G*F(s)$, where $G=*_{i=1}^rG_i$ is the free product of $r\geq1$
one-ended groups and $\Pi$ is a projective $\mathbb{Z}[\pi]$-module
then $c.d.\pi\leq4$, with equality if $\pi$ has one end.
\end{enumerate}
\end{lemma}

\begin{proof}
Let $p:\Pi\to\Pi/D(E)$ and ${q:H\to{H/E}}$ be the canonical epimorphisms.
Poincar\'e duality induces an isomorphism $\gamma:H/E\cong\Pi/D(E)$.
It is straightforward to verify that
$p^\dagger(\gamma^\dagger)^{-1}\widetilde{\lambda_X}q=ev$,
and (1) is clear.

If $\lambda_X$ is nonsingular then $\widetilde{\lambda_X}$ is an isomorphism,
and so $\mathrm{Coker}(p^\dagger)=\mathrm{Coker}(ev)$. 
If moreover $\Pi/D(E)\cong{H/E}$ is projective then 
$\Pi\cong(\Pi/D(E))\oplus{D}(E)$.
Hence $\Pi^\dagger\cong(\Pi/D(E))^\dagger\oplus{E}^\dagger$,
and so $E^\dagger\cong\mathrm{Coker}(p^\dagger)={E}^3\mathbb{Z}$.

If $\lambda_X$ is nonsingular 
and $E^\dagger=0$ then $\widetilde{\lambda_X}$
and $p^\dagger$ are isomorphisms, and so 
$ev=p^\dagger(\gamma^\dagger)^{-1}\widetilde{\lambda_X}q$ is an epimorphism.
Hence $E^3\mathbb{Z}=0$.

If $E^3\mathbb{Z}=0$ then $H/E=\Pi^\dagger$ and $ev=q$.
Since $q$ is an epimorphism it follows that 
$p^\dagger(\gamma^\dagger)^{-1}\widetilde{\lambda_X}=id_{\Pi^\dagger}$,
and so $p^\dagger$ is an epimorphism.
Since $p^\dagger$ is also a monomorphism it is an isomorphism.
Therefore $\widetilde{\lambda_X}=\gamma^\dagger(p^\dagger)^{-1}$
is also an isomorphism.

If $\Pi$ is projective then so is $\Pi^\dagger$.
If, moreover, $E^3\mathbb{Z}=0$ then $H\cong{E}\oplus\Pi^\dagger$.
Hence  $E$ is projective, since it is a direct summand of $H\cong\Pi$,
and so $E\cong{E^{\dagger\dagger}}=0$.

If $\pi$ is a free product of $r\geq1$ one-ended groups and $s$ copies of $\mathbb{Z}$ then $E^1\mathbb{Z}\cong\mathbb{Z}[\pi]^{r+s-1}$,
by Lemma \ref{freeE^1}.
If, moreover, $\Pi$ is projective then so are $C_3'=C_3\oplus\Pi$ and $C_4'=C_4\oplus{E^1\mathbb{Z}}$.
We may easily extend the differentials of $C_*$ to obtain a projective
resolution $C_*'$ of length 4 for $\mathbb{Z}$.
Hence $c.d.\pi\leq4$.
If $\pi$ has one end and $\Pi$ is projective then 
$H^4(\pi;\mathbb{Z}[\pi])=E^4\mathbb{Z}\cong{H^4(X;\mathbb{Z}[\pi])}\cong\mathbb{Z}$,
by the Universal Coefficient spectral sequence and Poincar\'e duality,
and so $c.d.\pi=4$.
\end{proof}

In particular, if $E^2\mathbb{Z}=0$ then 
$\lambda_X$ is nonsingular if and only if $E^3\mathbb{Z}=0$ also.

Can  the hypotheses in this lemma be simplified?
If $G=\mathbb{Z}^2*\mathbb{Z}^2$ and $\pi=G\times{G}$ 
then $E^1\mathbb{Z}=E^3\mathbb{Z}=0$,
but $E^2\mathbb{Z}\cong\mathbb{Z}[\pi]$, and so $E^\dagger\not=0$.
Projectivity of $\Pi^\dagger$ and $E^2\mathbb{Z}=0$ together do not imply that
$E^3\mathbb{Z}=0$.
For if $\pi$ is a $PD_3$-group and $w=w_1(\pi)$ then 
$E^s\mathbb{Z}=0$ for $s<3$ and $\Pi$ is stably isomorphic 
to the augmentation ideal of $\mathbb{Z}[\pi]$, by Theorem 3.13 of \cite{Hi}, 
and so $\Pi^\dagger$ is stably free.
However $E^3\mathbb{Z}\cong\mathbb{Z}\not=0$.

We shall say that a based map $f:X\to{Y}$ between $PD_4$-complexes 
is a {\it degree-1 map\/} and write $f_*[X]=\pm[Y]$ if 
$f^*w_1(Y)=w_1(X)=w$ and the lift of $f$ to a based map of universal covers
induces an isomorphism $H_4(X;\mathbb{Z}^w)\cong{H_4(Y;\mathbb{Z}^w)}$.
(Note that if we do not work with based maps the homomorphisms
induced by different lifts may differ by sign -- 
see \cite{Ta} for an investigation of the subtleties involved.)
The homomorphism $\pi_1(f)$ is then surjective,
and Poincar\'e duality in $X$ and $Y$ determine {\it umkehr} 
homomorphisms $f_!:H_*(Y;\mathbb{Z}[\pi_1(Y)])\to{H_*(X;f^*\mathbb{Z}[\pi_1(Y)])}$, 
which split the homomorphisms induced by $f$.
The umkehr homomorphisms are well-defined up to sign.
(See \S10.3 of \cite{Ra}.)
If $f:X\to Z$ is a 2-connected degree-1 map then cap product with $[X]$ 
induces an isomorphism from the ``surgery cokernel"
$K^2(f)=\overline{\mathrm{Cok}(H^2(f;\mathbb{Z}[\pi]))}$ to $K_2(f)$, and
the induced pairing $\lambda_f$ on $K^2(f)\times{K^2(f)}$ is nonsingular,
by Theorem 5.2 of \cite{Wa}.

We shall not usually specify a fundamental class $[X]$, 
and so we shall allow orientation-reversing homotopy equivalences 
of oriented $PD_4$-complexes, 
and isomorphisms of modules with pairings
which are isometries after a change of sign.
In particular, if $Y$ is a second $PD_4$-complex we write $\lambda_X\cong\lambda_Y$
if there is an isomorphism $\theta:\pi_1(X)\cong\pi_1(Y)$ such that
$w_1(X)=w_1(Y)\circ\theta$ and a $\mathbb{Z}[\pi]$-module isomorphism 
$\Theta:\pi_2(X)\cong\theta^*\pi_2(Y)$
inducing an isometry of cohomology intersection pairings
(after changing the sign of $[Y]$, if necessary).

In \cite{BB} it is shown that a $PD_4$-complex $X$ is determined by 
its algebraic 2-type (i.e., by $P_2(X)$) 
together with $w_1(X)$ and $f_{X*}[X]$. 
(The main step involves showing that if $h:P_2(X)\to{P_2(Y)}$ 
is a homotopy equivalence such that $h^*w_1(Y)=w_1(X)$ and
$h_*f_{X*}[X]=f_{Y*}[Y]$ (up to sign) then $h=P_2(g)$ for some map 
$g:X\to{Y}$ such that $H_4(g;\mathbb{Z}^w)$ is an isomorphism.)
Our goal is to show that under suitable conditions
$X$ is determined by the more accessible invariants encapsulated in
the sextuple $[\pi,w,v_2(X),\Pi,k_1(X),\lambda_X]$.
(This is the {\it quadratic $2$-type} of $X$,
as in \cite{HK88},
enhanced by the Wu classes; equivalently, by the Stiefel-Whitney classes.) 
If $\lambda_X\not=0$ then $\lambda_X$ determines $w$,
since $\lambda_X(gu,gv)=w(g)g\lambda_X(u,v)g^{-1}$ for all $u,v$ and $g$.

It shall be useful to distinguish three ``$v_2$-types" of $PD_4$-complexes:
{\makeatletter
\renewcommand{\theenumi}{\Roman{enumi}}
\renewcommand{\p@enumi}{\theenumi--}
\makeatother
\begin{enumerate}
\item $v_2(\widetilde{X})\not=0$ (i.e., 
$v_2(X)$ is not in the image of $H^2(\pi;\mathbb{F}_2)$ under $c_X^*$);
\item $v_2(X)=0$;
\item $v_2(X)\not=0$ but $v_2(\widetilde{X})=0$ (i.e.,
$v_2(X)$ is in $c_X^*(H^2(\pi;\mathbb{F}_2))\setminus\{0\}$).
\end{enumerate}}
(This trichotomy is due to Kreck, 
who formulated it in terms of Stiefel-Whitney
classes of the stable normal bundle of a closed 4-manifold.)
The {\it refined\/} $v_2$-type (II and III) is given by the orbit 
of $v_2$ in $H^2(\pi;\mathbb{F}_2)$ under the action of 
automorphisms of $\pi$ which fix the orientation character.

\section{minimal models}

A  {\it model} for a $PD_4$-complex $X$ is a 2-connected 
degree-1 map $f:X\to Z$ to a $PD_4$-complex $Z$.
(We shall also say that $Z$ is a model for $X$.)
The ``surgery kernel" $K_2(f)=\mathrm{Ker}(\pi_2(f))$ is a finitely generated 
projective $\mathbb{Z}[\pi]$-module, 
and is an orthogonal direct summand of $\pi_2(X)$ with respect to 
the intersection pairing, by Theorem 5.2 of \cite{Wa}.
If both complexes are finite then $K_2(f)$ is stably free.
The $PD_4$-complex $X$ is {\it order-minimal} if 
every such map is a homotopy equivalence, i.e., 
if $X$ is minimal with respect to the order determined by such maps.
It is {\it strongly minimal\/} if $\lambda_X=0$,
and is {\it $\chi$-minimal\/} if $\chi(X)\leq\chi(Y)$,
for $Y$ any $PD_4$-complex with $(\pi_1(Y),w_1(Y))\cong(\pi,w)$.
We then let $q(\pi,w)=\chi(X)$ be this minimal value.
(The definition of  ``strongly minimal" used here may be broader 
than the one used in \cite{Hi04b}, 
where we said that $Z$ was strongly minimal if $\pi_2(Z)^\dagger=0$.
The two definitions are equivalent if $(E^2\mathbb{Z})^\dagger=0$.)

Order minimality is the most natural property, 
and $\chi$-minimality perhaps the one most easily established.
Strongly minimal $PD_4$-complexes are clearly order-minimal.
We shall show that $\chi$-minimality interpolates between these notions, 
when the $L^2$ Euler characteristic formula applies.

\begin{theorem}
\label{L2beta2}
A $PD_4$-complex $X$ with fundamental group $\pi$ 
is strongly minimal if and only if
$\beta_2^{(2)}(X)=\beta_2^{(2)}(\pi)$.
\end{theorem}

\begin{proof}
The module $C^2(X;\mathbb{C}[\pi])$ may be identified with the group 
of cellular 2-cochains with compact support on $\widetilde{X}$, 
while the corresponding module $C^2_{(2)}(\widetilde{X})$ 
of $L^2$-cochains is the group of square-summable cellular 2-cochains  
on $\widetilde{X}$.
The compactly supported cochains are dense in the square-summable cochains.
For each $z\in\pi_2(X)$ the evaluation $ev_z:f\to{f(z)}$ is continuous 
as a linear map from $C^2_{(2)}(\widetilde{X})$ to $\mathbb{C}$.
(See the proof of Theorem 3.4 of \cite{Hi}.
If $X$ is strongly minimal then $ev_z(f)=0$ for all $f\in{C^2(X;\mathbb{C}[\pi])}$.
Hence  $ev_z=0$ for all $z\in\pi_2(M)$.
The $L^2$ analogue of the evaluation sequence 
(as in \S1.4 of \cite{Ec94}) then shows that $c_X$ induces 
an isomorphism on the unreduced $L^2$-cohomology modules, 
and so $\beta_2^{(2)}(X)=\beta_2^{(2)}(\pi)$.
The converse is part (3) of Theorem 3.4 of \cite{Hi}.
\end{proof}

The next two corollaries need a further hypothesis at present.
 
\begin{cor}
\label{sm-chim-om}
Suppose that either $X$ is finite or $\pi$ 
satisfies the Strong Bass Conjecture.
Then if $X$ is strongly minimal it is $\chi$-minimal, 
and if it is $\chi$-minimal it is order minimal.
\end{cor}

\begin{proof}
If $X$ is finite or $\pi$ satisfies the Strong Bass Conjecture
we may use the $L^2$-Euler characteristic formula
then $\chi(X)=\beta_2^{(2)}(X)-2\beta_1^{(2)}(X)$ \cite{Ec96}.
Since we may construct a $K(\pi,1)$ complex by adjoining cells 
of dimension $>2$ to $X$, we have 
$\beta_2^{(2)}(X)\geq\beta_2^{(2)}(\pi)$, in general.
Hence $X$ strongly minimal implies that $X$ is $\chi$-minimal, 
by the Theorem.

Suppose that $f:X\to Y$ is a 2-connected degree 1 map and $\chi(X)=\chi(Y)$.
Then $K_2(f)$ is a finitely generated projective $\mathbb{Z}[\pi]$-module 
and $\mathbb{Z}\otimes_{\mathbb{Z}[\pi]}K_2(f)=0$.
If $X$ is finite then $X$ is a stably free $\mathbb{Z}[\pi]$-module, 
so $K_2(f)=0$, by a result of Kaplansky  \cite{Ro84}.
This also holds if $\pi$ satisfies the Weak Bass Conjecture \cite{Ec86}.
In either case, $f$ is a homotopy equivalence, and
so $\chi$-minimality implies order minimality.
\end{proof}

In particular, every sequence of 2-connected degree 1 maps 
\[
X_1\to X_2\to X_3\to\dots
\] 
eventually becomes a sequence of homotopy equivalences.
If $f:X\to Z$ is a 2-connected degree-1 map and $Z$ is strongly minimal 
then $\lambda_f=\lambda_X$.
 
\begin{cor}
\label{SBC}
Suppose that either $X$ is finite or $\pi$ 
satisfies the Strong Bass Conjecture.
If $\beta_1^{(2)}(X)=\chi(X)=0$ 
then $X$ is strongly minimal.
\end{cor}

\begin{proof}
In this case the $L^2$ Euler characteristic formula gives 
$\beta_2^{(2)}(X)=0$.
Hence $\beta_2^{(2)}(X)=\beta_2^{(2)}(\pi)$.
\end{proof}

Strong minimality has the disadvantage of limited applicability.
However, the case of greatest interest to us is when $c.d.\pi\leq2$. 
The three notions of minimality are then equivalent,
and order minimality is equivalent to 
strong minimality if and only if $c.d.\pi\leq2$.
(See Theorems \ref{cd2} and \ref{om=sm} below,
and \cite{Hi04a} for  $\pi$ a free group.)

If $\pi\cong\mathbb{Z}^r$ and $X$ is $\chi$-minimal 
then $X$ is order minimal.
However, $X$ can only be strongly minimal if $r=1$, 2 or 4.
The 4-torus $\mathbb{R}^4/\mathbb{Z}^4$ is the unique strongly minimal 
$PD_4$-complex with fundamental group $\mathbb{Z}^4$, 
since $E^s\mathbb{Z}=0$ if $s\leq3$ for this group.
Hence $q(\mathbb{Z}^4)=0$.
Let $K$ be the 2-complex corresponding to the standard presentation 
of $\mathbb{Z}^4$ with four generators and six relators, and let
$N$ be a regular neighbourhood of an embedding of $K$ in $\mathbb{R}^5$.
Then $M=\partial{N}$ is an orientable 4-manifold with 
$\pi_1(M)\cong\mathbb{Z}^4$ and $\chi(M)=6$.
If a 2-connected degree 1 map ${f:M\to Y}$ is not a homotopy equivalence
then $\chi(Y)<\chi(M)$ and so $\beta_2(Y)<12$.
Since $c_Y^*H^2(\mathbb{Z}^4;\mathbb{Z})$ has rank 6 it follows easily 
from Poincar\'e duality in $Y$ that $c_Y^*H^2(\mathbb{Z}^4;\mathbb{Z})$ 
cannot be self-annihilating with respect to cup product, 
and so $c_Y$ has nonzero degree.
However $c_{M*}[M]=0$, since $c_M$ factors through $N$,
and so there can be no such map $f$.
Thus $M$ is order-minimal, but not $\chi$-minimal, and not strongly minimal.

If $Z$ is strongly minimal and $\pi\cong{G_1*G_2}$ does $Z$ decompose 
accordingly as a connected sum?
If so, the hypothesis that $\pi$ have one end would not be
needed in our consideration later of groups of cohomologicial dimension 2.
If $M$ is a closed 4-manifold and $\pi_1(M)\cong{G_1*G_2}$ then 
there is a simply-connected 4-manifold $N$ such that 
$M\#N\cong{P_1\#P_2}$,
where $\pi_1(P_i)\cong{G_i}$ for $i=1,2$.
(See Theorem 14.10 of \cite{Hi}.)
If $p_i:P_i\to{Z_i}$ are strongly minimal models then 
$p=p_1\#p_2:M\#N\to{Z_1\#Z_2}$ is a strongly minimal model for $M\#N$.
The image of $\pi_2(N)$ generates a projective direct summand of
$\pi_2(M\#N)$ on which the intersection pairing is nonsingular,
and so $p$ factors through $M$, by the construction of Theorem \ref{exist} below.
Thus $M$ has a strongly minimal model which is a connected sum.

A strongly minimal 4-manifold $M$ must be of type II or III, 
since $\alpha^*v_2(\widetilde{M})$ is the normal 
Stiefel-Whitney class $w_2(\nu_\alpha)$, 
for $\alpha$ an immersion of $S^2$ in $\widetilde{M}$ 
with normal bundle $\nu_\alpha$, and so $v_2(\widetilde{M})([\alpha])$
is the {\it mod}-2 self-intersection number of $[\alpha]\in\pi_2(M)$.
Is there a purely homotopy-theoretic argument showing that
all strongly minimal $PD_4$-complexes are of type II or III?
(This is so if  $c.d.\pi=2$, by Theorem \ref{cd2min} below.)

\begin{lemma}
\label{v2type}
Let $f:X\to{Z}$ be a $2$-connected degree-$1$ map of $PD_4$-complexes
with fundamental group $\pi$.
If $X$ is of type {\rm{II}} or {\rm{III}} then so is $Z$.
\end{lemma}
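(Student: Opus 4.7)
The plan is to reformulate the hypothesis as the single condition $v_2(X)\in c_X^*H^2(\pi;\mathbb{F}_2)$ (with type II the subcase $v_2(X)=0$), and then to prove that the very class $\phi\in H^2(\pi;\mathbb{F}_2)$ pulling back to $v_2(X)$ under $c_X^*$ also pulls back to $v_2(Z)$ under $c_Z^*$. Since $f$ induces an isomorphism on $\pi_1$, the classifying maps satisfy $c_X\simeq c_Z\circ f$, so the hypothesis immediately gives $v_2(X)=f^*c_Z^*\phi$.

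Testing the candidate equality $v_2(Z)=c_Z^*\phi$ by pairing with an arbitrary $y\in H^2(Z;\mathbb{F}_2)$ against the mod-$2$ fundamental class, I would apply in turn Wu's formula on $Z$, naturality of $Sq^2$, the degree-one relation $f_*[X]_{\mathbb{F}_2}=[Z]_{\mathbb{F}_2}$, Wu's formula on $X$, and the identity $v_2(X)=f^*c_Z^*\phi$:
\[
\langle v_2(Z)\cup y,[Z]\rangle
=\langle Sq^2 y,[Z]\rangle
=\langle Sq^2 f^*y,[X]\rangle
=\langle v_2(X)\cup f^*y,[X]\rangle
=\langle f^*(c_Z^*\phi\cup y),[X]\rangle
=\langle c_Z^*\phi\cup y,[Z]\rangle.
\]
Hence $(v_2(Z)-c_Z^*\phi)\cup y$ evaluates to zero on $[Z]_{\mathbb{F}_2}$ for every $y\in H^2(Z;\mathbb{F}_2)$; the nonsingularity of the middle-dimensional $\mathbb{F}_2$-valued cup-product pairing on a $PD_4$-complex then forces $v_2(Z)=c_Z^*\phi\in c_Z^*H^2(\pi;\mathbb{F}_2)$, so $Z$ is of type II or III.

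There is no real obstacle: beyond this chain of equalities, the only inputs are the perfectness of the $\mathbb{F}_2$ cup-product pairing on a $PD_4$-complex and the identification $\ker(p_Z^*:H^2(Z;\mathbb{F}_2)\to H^2(\widetilde Z;\mathbb{F}_2))=c_Z^*H^2(\pi;\mathbb{F}_2)$, which follows from the low-degree terms of the Serre spectral sequence for $\widetilde Z\to Z\to K(\pi,1)$ since $H^1(\widetilde Z;\mathbb{F}_2)=0$. Both are standard, and neither requires any further hypothesis on $\pi$ or on $c.d.\pi$.
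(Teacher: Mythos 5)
Your proof is correct and takes essentially the same route as the paper's: both reduce the claim to the identity $v_2(Z)\cup y=c_Z^*\phi\cup y$ for all $y\in H^2(Z;\mathbb{F}_2)$ via the Wu formula, naturality, and the degree-one hypothesis, and then conclude by nonsingularity of the mod-$2$ cup-product pairing. The only cosmetic difference is that you evaluate against the fundamental classes $[X]$ and $[Z]$, whereas the paper works in $H^4(\,\cdot\,;\mathbb{F}_2)$ and cancels the injective map $f^*$ (and allows a self-equivalence $g$ of $K(\pi,1)$ relating $c_X$ and $c_Zf$, which is immaterial for membership in the image of $c_Z^*$).
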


\begin{proof}
Since $f$ is 2-connected, 
$c_X=gc_Zf$, for some self homotopy equivalence $g$ of $K(\pi,1)$.
If $v_2(X)=c_X^*V$ for some $V\in{H^2(\pi;\mathbb{F}_2)}$ then
\[
f^*(v_2(Z)\cup\alpha)=f^*(\alpha^2)=v_2(X)\cup{f^*\alpha}=f^*(c_Z^*g^*V\cup\alpha),
\]
for all $\alpha\in{H^2(Z;\mathbb{F}_2)}$.
Hence $v_2(Z)=c_Z^*g^*V$, since $H^4(f;\mathbb{F}_2)$ is an isomorphism.
\end{proof}

The converse is false.
For instance, the blowup of a ruled surface is of type I, 
but its minimal models are of type II or III.
(See \S14 below.)

If $X$ has $v_2$-type I and $c.d.\pi=2$ 
is there a model $f:X\to{Z}$ with $v_2(Z)=0$?

\begin{lemma}
\label{virtual}
Let $Z$ be a $PD_4$-complex with fundamental group $\pi$,
and let $Z_\rho$ be the covering space associated to a subgroup $\rho$ of finite index in $\pi$.
Then $Z$ is strongly minimal if and only if $Z_\rho$ is strongly minimal.
\end{lemma}

\begin{proof}
Let $\Pi=\pi_2(Z)$.
Then $\pi_2(Z_\rho)\cong\Pi|_\rho$.
Moreover, $H^2(\pi;\mathbb{Z}[\pi])|_\rho\cong{H^2}(\rho;\mathbb{Z}[\rho])$
and $Hom_{\mathbb{Z}[\pi]}(\Pi,\mathbb{Z}[\pi])|_\rho\cong
{Hom}_{\mathbb{Z}[\rho]}(\Pi|_\rho,\mathbb{Z}[\rho])$,
as right $\mathbb{Z}[\rho]$-modules, since $[\pi:\rho]$ is finite.
The lemma follows from these observations.
\end{proof}

\section{existence of strongly minimal models}

In this section we shall obtain a criterion for the existence of a strongly
minimal model, as a consequence of the following theorem, which may be thought
of as a converse to the 4-dimensional case of Wall's Lemma 2.2 and Theorem 5.2.

\begin{theorem}
\label{exist}
Let $X$ be a $PD_4$-complex  with fundamental group $\pi$.
If $K$ is a finitely generated projective direct summand of $H^2(X;\mathbb{Z}[\pi])$ such that $\lambda_X$ induces 
a nonsingular pairing on $K\times{K}$ then there is a $PD_4$-complex $Z$ 
and a $2$-connected degree-$1$ map $f:X\to Z$ with $K_2(f)=D(K)$.
\end{theorem}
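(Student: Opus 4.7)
The plan is to construct $Z$ by attaching $3$-cells to $X$ that kill $D(K)\subset\Pi=\pi_2(X)$, relying on the nonsingularity of $\lambda_X|_K$ to preserve Poincar\'e duality in the quotient. First I would reduce to the case $K$ free: since $K$ is finitely generated projective, pick a complement $L$ with $K\oplus L\cong\mathbb{Z}[\pi]^n$, and stabilise $X$ by connect-summing $n$ copies of an $S^2$-bundle over $S^2$ (orientable or twisted as dictated by $w$), producing $X^+$. Then $H^2(X^+;\mathbb{Z}[\pi])$ acquires a free hyperbolic summand $\mathbb{Z}[\pi]^{2n}$, and $K^+:=K\oplus\mathbb{Z}[\pi]^n$, with $\mathbb{Z}[\pi]^n$ taken to be one Lagrangian of this summand, is a free direct summand of rank $n$ still disjoint from $E$ and with $\lambda_{X^+}$ nonsingular on it. Proving the statement for $(X^+,K^+)$ and then undoing the stabilisation on the added Lagrangian gives the conclusion for $(X,K)$.

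Assuming $K$ is free on a basis $e_1,\dots,e_n$, realise the classes $D(e_i)\in\Pi$ by spherical maps $\alpha_i:S^2\to X$ (via the Hurewicz isomorphism $\pi_2(X)\cong H_2(\widetilde X;\mathbb{Z})$), and set
\[
Z = X\cup_{\alpha_1,\dots,\alpha_n}\bigsqcup_{i=1}^n e^3_i,
\]
with $f:X\to Z$ the inclusion. Then $\pi_1(f)$ is an isomorphism, and the relative cellular chain complex $C_*(\widetilde Z,\widetilde X)$ is concentrated in degree $3$, equal to $\mathbb{Z}[\pi]^n$, with boundary $e_i\mapsto D(e_i)$. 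The homology long exact sequence of the pair yields $H_2(\widetilde Z)\cong\Pi/D(K)$, $H_3(\widetilde Z)\cong H_3(\widetilde X)$, and $H_4(\widetilde Z)=0$ for $\pi$ infinite, so $f$ is $2$-connected; moreover $f$ has degree $1$ since $X$ and $Z$ share the same top cell.

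The central step is verifying that $Z$ is a $PD_4$-complex with fundamental class $[Z]=f_*[X]$. The dual cohomology long exact sequence of the pair gives $H^2(Z;\mathbb{Z}[\pi])\cong\ker(H^2(X;\mathbb{Z}[\pi])\to\mathbb{Z}[\pi]^n)$, where the second map carries $c$ to $(c(D(e_i)))_i=(\lambda_X(e_i,c))_i$, i.e.\ the composition of the projection $H\to K$ with the adjoint $\widetilde{\lambda_X|_K}:K\to K^\dagger$. Nonsingularity of $\lambda_X|_K$ makes $\widetilde{\lambda_X|_K}$ an isomorphism, so this kernel is a $\lambda_X$-orthogonal complement of $K$ in $H$ (automatically containing $E$, using $K\cap E=0$); naturality of cap product then identifies $-\cap[Z]$ with $-\cap[X]$ on matching quotients, delivering Poincar\'e duality on $Z$. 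With duality established, $K_2(f)=\ker(\pi_2(f))=D(K)$ follows from the homology computation. The principal obstacle is this duality verification: the two hypotheses play complementary roles, with $K\cap E=0$ ensuring the new $3$-cells do not disturb the $E^2\mathbb{Z}$-term of the evaluation sequence for $Z$, and nonsingularity of $\lambda_X|_K$ providing precisely the invertibility needed to install the Poincar\'e structure on the quotient.
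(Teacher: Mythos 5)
Your treatment of the case where $K$ is free on a basis $e_1,\dots,e_n$ is essentially the paper's argument: attach $3$-cells along maps representing the $D(e_i)$, observe that the connecting homomorphism $H^2(X;\mathbb{Z}[\pi])\to H^3(Z,X;\mathbb{Z}[\pi])\cong K^\dagger$ restricts on $K$ to the adjoint of $\lambda_X|_{K\times K}$ and is therefore onto, so that $H^3(Z;\mathbb{Z}[\pi])=0$ and $H^2(Z;\mathbb{Z}[\pi])$ is identified with the orthogonal complement of $K$, and then invoke the projection formula to transport duality. (In this case the basis elements $D(e_i)$ freely generate $D(K)$, so no $4$-cells are needed.) That part is sound and matches the paper.

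The gap is in your reduction to the free case, which does not work as described. First, $K\oplus\mathbb{Z}[\pi]^n\cong K\oplus K\oplus L$ is not ``a free direct summand of rank $n$''; it is free if and only if $K$ is stably free, so the stabilisation never reaches the genuinely projective case, which is part of the hypothesis of the theorem. Second, if you adjoin only one Lagrangian of the new hyperbolic summand, then $\lambda_{X^+}$ restricted to $K^+=K\oplus\mathbb{Z}[\pi]^n$ is singular (the Lagrangian is totally isotropic and orthogonal to $K$), so the nonsingularity hypothesis you need in order to run the free-case argument on $(X^+,K^+)$ fails; adjoining the full hyperbolic summand restores nonsingularity but brings back the freeness problem. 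Third, even granting a construction of $Z^+$ from $X^+$, you obtain a $2$-connected degree-$1$ map out of $X^+=X\#(\cdots)$ rather than out of $X$; there is no map $X\to X^+$ to compose with, and ``undoing the stabilisation'' is not explained. The paper circumvents all of this: for $K$ stably free it chooses $s$ generators of $D(K)$ whose relation module is free of some rank $t$, attaches $s$ $3$-cells, and then attaches $t$ $4$-cells to kill the resulting $H_3(Y,X;\mathbb{Z}[\pi])\cong\mathbb{Z}[\pi]^t$; for $K$ projective but not stably free it uses the Eilenberg swindle $K\oplus F\cong F$, with $F$ free of countably infinite rank, attaching countably many $2$-, $3$- and $4$-cells. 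As written, your argument establishes the theorem only when $K$ is free.
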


\begin{proof}
Suppose first that $K$ is stably free and choose maps $m_i:S^2\to X$ 
for $1\leq i\leq s$ representing generators of $D(K)$,
and such that the kernel of the corresponding epimorphism 
$m:\mathbb{Z}[\pi]^s\to{D(K)}$ is free of rank $t$.
Attach $s$ 3-cells to $X$ along the $m_i$ to obtain a cell complex $Y$
with $\pi_1(Y)\cong\pi$, $\pi_2(Y)\cong\Pi/D(K)$ and
$H_3(Y;\mathbb{Z}[\pi])\cong H_3(X;\mathbb{Z}[\pi])\oplus\mathbb{Z}[\pi]^t$.
Since the Hurewicz map is onto in degree 3 for 1-connected spaces
(such as $\widetilde Y$)
we may then attach $t$ 4-cells to $Y$ along maps whose Hurewicz images
form a basis for $H_3(Y,X;\mathbb{Z}[\pi])$ to obtain a cell complex $Z$ with 
$\pi_1(Z)\cong\pi$ and $\pi_2(Z)\cong\Pi/D(K)$.

If $K$ is not stably free then $K\oplus F\cong F$, 
where $F$ is free of countable rank,
and we first construct $Y$ by attaching countably many 2- and 3-cells to $X$,
and then attach countably many 4-cells to $Y$ to obtain $Z$ as before.

The inclusion $f:X\to Z$ is 2-connected and 
$\mathrm{Ker}(H_2(f;\mathbb{Z}[\pi]))=D(K)$.
Comparison of the equivariant chain complexes for $X$ and $Z$ shows that
$H_i(f;\mathbb{Z}[\pi])$ is an isomorphism for all $i\not=2$,
while $H^j(f;\mathbb{Z}[\pi])$ is an isomorphism for all $j\not=2$ or $3$,
and $H^2(f;\mathbb{Z}[\pi])$ is a monomorphism.
The connecting homomorphism in the long exact sequence for the cohomology of $(Z,X)$ 
with coefficients $\mathbb{Z}[\pi]$ induces an isomorphism from the summand 
$K\leq{H^2(X;\mathbb{Z}[\pi])}$ to $H^3(Z,X;\mathbb{Z}[\pi])=Hom_{\mathbb{Z}[\pi]}(D(K),\mathbb{Z}[\pi])$.
Therefore $H^3(Z;\mathbb{Z}[\pi])=0$.

Let $[Z]=f_*[X]\in H_4(Z;\mathbb{Z}^w)$.
Cap product with $[Z]$ gives isomorphisms
$\overline{H^j(Z;\mathbb{Z}[\pi]})\cong H_{4-j}(Z;\mathbb{Z}[\pi])$ for $j\not=2$, 
by the projection formula $f_*([X]\cap f^*\alpha)=[Z]\cap\alpha$.
This is also true when $j=2$,
for then $H^2(f;\mathbb{Z}[\pi])$ identifies $H^2(Z;\mathbb{Z}[\pi])$
with the orthogonal complement of $K$ in $H^2(X;\mathbb{Z}[\pi])$,
and $f_*([X]\cap-)$ carries this isomorphically to $H_2(Z;\mathbb{Z}[\pi])$.
Therefore $Z$ is a $PD_4$-complex with fundamental class $[Z]$, 
$f$ has degree 1 and $K_2(f)=D(K)$.
\end{proof}

This construction derives from \cite{Hi04b}, via \cite{Hi06}.
The main theorem of \cite{HRS} includes a similar result, 
for $X$ a closed orientable 4-manifold and $K$ a free module.

\begin{cor}
\label{existcor}
The $PD_4$-complex $X$ has a strongly minimal model if and only if
$H/E$ is a finitely generated projective $\mathbb{Z}[\pi]$-module
and $\lambda_X$ is nonsingular.
\end{cor}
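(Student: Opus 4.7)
The plan is to view both directions as a matching between the algebraic splitting $H = E \oplus (H/E)$ and the geometric splitting of $H$ induced by a 2-connected degree-1 map $f:X\to Z$ via surgery. In one direction I would apply Theorem \ref{exist}; in the other, the 4-dimensional case of Theorem 5.2 of \cite{Wa} invoked earlier in the paper.

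For the ``if'' direction, I would first split the extension $0\to E\to H\to H/E\to 0$ using the assumed projectivity of $H/E$, producing a finitely generated projective complement $K\cong H/E$ in $H^2(X;\mathbb{Z}[\pi])$ (after conjugation) with $K\cap E = 0$. Since $E$ lies in the radical of $\lambda_X$, the restriction of $\lambda_X$ to $K\times K$ inherits the nonsingular pairing on $H/E$. Theorem \ref{exist} then furnishes a 2-connected degree-1 map $f:X\to Z$ with $K_2(f)=D(K)$. To conclude that $Z$ is strongly minimal, I would combine the analysis in the proof of Theorem \ref{exist}, which shows that $\overline{\mathrm{Im}(H^2(f))}$ is a direct complement to $K$ in $H$, with the naturality $c_X\simeq c_Z\circ f$, which forces $E\subseteq\overline{\mathrm{Im}(H^2(f))}$. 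Two complements of $K$ in $H$ with one contained in the other must coincide, so $H(Z)=\overline{\mathrm{Im}(H^2(f))}=E$; Lemma \ref{lambdaE}(1) then gives $\lambda_Z=0$.

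For the ``only if'' direction, given a 2-connected degree-1 map $f:X\to Z$ with $Z$ strongly minimal, I would invoke Theorem 5.2 of \cite{Wa} to obtain the surgery splitting $H=\overline{\mathrm{Im}(H^2(f))}\oplus K^2(f)$, with $K^2(f)\cong K_2(f)$ finitely generated projective and induced pairing $\lambda_f$ nonsingular. Strong minimality of $Z$ via Lemma \ref{lambdaE}(1) gives $H(Z)=E(Z)$, which amounts to the surjectivity of $c_Z^*:H^2(\pi;\mathbb{Z}[\pi])\to H^2(Z;\mathbb{Z}[\pi])$, so by $c_X^*=f^*\circ c_Z^*$ we obtain $\overline{\mathrm{Im}(H^2(f))}=\overline{\mathrm{Im}(c_X^*)}=E$ in $H$. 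Therefore $H/E\cong K^2(f)$ is finitely generated projective, and the pairing on $H/E$ agrees with $\lambda_f$ and so is nonsingular.

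The main obstacle, common to both directions, is pinning down the identification $\overline{\mathrm{Im}(H^2(f))}=E$ whenever $Z$ is strongly minimal. This hinges on the equivalence between $\lambda_Z=0$ and the surjectivity of $c_Z^*$ onto $H^2(Z;\mathbb{Z}[\pi])$, together with the naturality of classifying maps under $f$. Once this matching is in place, the rest amounts to bookkeeping with the evaluation sequence, the cited surgery splitting, and Theorem \ref{exist}.
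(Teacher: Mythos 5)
Your proof is correct and follows essentially the same route as the paper: necessity via Wall's surgery splitting (projectivity of $K_2(f)$ and nonsingularity of $\lambda_f=\lambda_X$), and sufficiency by splitting off a projective complement $K\cong H/E$ and feeding it to Theorem \ref{exist}. The only difference is that you spell out the identification $\overline{\mathrm{Im}(H^2(f))}=E$ showing the resulting model is strongly minimal, a step the paper leaves implicit; that verification is sound.
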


\begin{proof}
If $f:X\to Z$ is a 2-connected degree-$1$ map then
$K^2(f)=\mathrm{Cok}(H^2(f;\mathbb{Z}[\pi]))$ 
is a finitely generated projective direct summand of $H^2(X;\mathbb{Z}[\pi])$,
by Lemma 2.2 of \cite{Wa}.
If $Z$ is strongly minimal the inclusion $E\to\overline{H^2(Z;\mathbb{Z}[\pi])}$
is an isomorphism,
and so  $H/E\cong\overline{K_2(f)}$.
Hence the conditions are necessary.
If they hold the construction of Theorem \ref{exist} 
gives a strongly minimal model for $X$.
\end{proof}


The above conditions hold if $\Pi^\dagger$ is a finitely generated projective
$\mathbb{Z}[\pi]$-module and $E^3\mathbb{Z}=0$.
In particular, they hold if $c.d.\pi\leq2$, 
by an elementary argument using Schanuel's Lemma and duality.
(See Theorem \ref{cd2} below).
On the other hand, if $c.d.\pi=3$ then no $PD_4$-complex with fundamental
group $\pi$ is strongly minimal.
For if $\lambda_X=0$ then $E^3\mathbb{Z}\cong(E^2\mathbb{Z})^\dagger$,
by Lemma \ref{lambdaE},
and this condition cannot hold, by the next lemma.

\begin{lemma}
\label{cd3}
Let $\pi$ be a finitely presentable group such that $c.d.\pi\leq3$ and
let $E=E^2\mathbb{Z}$.
If $E^3\mathbb{Z}\cong{E^\dagger}$ then $c.d.\pi\leq2$.
\end{lemma}

\begin{proof}
Let $P_*$ be a projective resolution of $\mathbb{Z}$, of length 3.
Then $\partial_3^\dagger:P_2^\dagger\to{P_3^\dagger}$ is a presentation 
for $E^3\mathbb{Z}$.
Hence $(E^3\mathbb{Z})^\dagger=\mathrm{Ker}(\partial_3^{\dagger\dagger})=
\mathrm{Ker}(\partial_3)=0$.
But then $E^3\mathbb{Z}\cong{E^\dagger}\cong{E}^{\dagger\dagger\dagger}=0$.
Hence $\partial_3$ is a split injection, and so $c.d.\pi\leq2$.
\end{proof}

Surgery on a factor of the 4-torus $\mathbb{R}^4/\mathbb{Z}^4$ 
gives a closed 4-manifold $M$ with $\pi\cong\mathbb{Z}^3$ 
and $\chi(M)=2$.
This 4-manifold is $\chi$-minimal, by Lemma 3.11 of \cite{Hi},
and is order minimal, by Corollary \ref{SBC},
but cannot be strongly minimal, 
since $c.d.\pi=3$.

The condition $E^3\mathbb{Z}\cong(E^2\mathbb{Z})^\dagger$ 
is far from characterizing the 
fundamental groups of strongly minimal $PD_4$-complexes.
In \S9--\S14 we shall determine such groups within certain subclasses.
In all cases considered,
$\pi$ has finitely many ends 
(i.e., $\pi$ is virtually cyclic or $E^1\mathbb{Z}=0$) 
and  $E^3\mathbb{Z}=0$.

\begin{lemma}
\label{sharp}
Let $f:X\to{Z}$ be a $2$-connected degree-$1$ map of $PD_4$-complexes
with fundamental group $\pi$.
Then $k_1(Z)=f_\#(k_1(X))$ and $k_1(X)=f_{!\#}k_1(Z)$, 
where $f_\#$ and $f_{!\#}$ are the change-of-coefficients homomorphisms 
induced by $\pi_2(f)$ and the umkehr homomorphism.
If $E^3\mathbb{Z}=0$ then these are mutually inverse isomorphisms.
\end{lemma}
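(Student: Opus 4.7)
The plan is to establish the two assertions in sequence: the first by naturality of the Postnikov $k$-invariant, the second by combining Wall's splitting of the surgery kernel with the hypothesis $E^3\mathbb{Z}=0$.

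For the first assertion, I would invoke the functoriality of Postnikov 2-stages. Since $f$ is 2-connected it induces the identity on $\pi$, so standard obstruction theory (equivalently, functoriality of $P_2(-)$) produces a map $P_2(f):P_2(X)\to P_2(Z)$ lying over the identity of $K(\pi,1)=P_1$ and restricting on the fibres $K(\pi_2(X),2)$ and $K(\pi_2(Z),2)$ to a map of Eilenberg--Mac Lane spaces induced by $\pi_2(f)$. Since $k_1$ classifies $P_2(-)\to K(\pi,1)$ as a principal $K(-,2)$-fibration, naturality of this classification gives $k_1(Z)=\pi_2(f)_*(k_1(X))=f_\#(k_1(X))$.

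For the second assertion, I would use that the surgery kernel $K_2(f)=\mathrm{Ker}(\pi_2(f))$ is a finitely generated projective $\mathbb{Z}[\pi]$-module and a direct summand of $\pi_2(X)$ (by Lemma~2.2 and Theorem~5.2 of \cite{Wa}, recalled at the start of \S5). The short exact sequence
\[
0\to K_2(f)\to\pi_2(X)\to\pi_2(Z)\to0
\]
therefore splits as $\mathbb{Z}[\pi]$-modules, and $f_\#$ is identified with the projection
\[
H^3(\pi;K_2(f))\oplus H^3(\pi;\pi_2(Z))\to H^3(\pi;\pi_2(Z)),
\]
which is an isomorphism if and only if $H^3(\pi;K_2(f))=0$.

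To finish, I would observe that $E^3\mathbb{Z}=0$ is, up to conjugation, the vanishing of $H^3(\pi;\mathbb{Z}[\pi])$. This vanishing propagates to every finitely generated free $\mathbb{Z}[\pi]$-module and, by the retract property of $\mathrm{Ext}$, to every finitely generated projective one, in particular to $K_2(f)$. The only step that needs real care is the first one: one must confirm both the existence of the Postnikov extension $P_2(f)$ and the precise form of the naturality of $k_1$. These are both standard consequences of obstruction theory applied to the principal fibration $c_{P_2(Z)}:P_2(Z)\to K(\pi,1)$, but given the central role of $k$-invariants throughout the paper, this is the place where the argument most warrants a careful spelling-out.
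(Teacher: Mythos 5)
Your proof is correct, and its second half is essentially identical to the paper's: both split $\pi_2(X)\cong K_2(f)\oplus\pi_2(Z)$ using Wall's theorem, identify $f_\#$ with the projection killing the $H^3(\pi;K_2(f))$ summand, and deduce that summand vanishes because $K_2(f)$ is a finitely generated projective and $H^3(\pi;\mathbb{Z}[\pi])=E^3\mathbb{Z}=0$. Where you diverge is the first assertion. You argue topologically: functoriality of $P_2(-)$ over $K(\pi,1)$ gives a map of Postnikov $2$-stages restricting to $K(\pi_2(f),2)$ on fibres, and naturality of the classification of such fibrations yields $k_1(Z)=(\pi_2 f)_\#\,k_1(X)$. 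The paper instead uses Dold's algebraic description of $k_1(X)$ as the class of the extension $0\to\Pi\to C_2/\partial C_3\to C_1\to C_0\to\mathbb{Z}\to0$ in $Ext^3_{\mathbb{Z}[\pi]}(\mathbb{Z},\Pi)$, for which the compatibility under the chain map induced by $f$ is immediate. Both routes are standard and sound; the chain-level identification has the advantage of being reused elsewhere in the paper (e.g.\ in the two-ended case, where the first nontrivial $k$-invariant of the minimal model is read off from an explicit exact sequence), whereas your Postnikov-naturality argument is more self-contained homotopy theory and, as you note, is the one step deserving careful spelling-out; your sketch of it is accurate.
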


\begin{proof} 
Since $K_2(f)$ is projective, $\pi_2(X)\cong\pi_2(Z)\oplus{K_2(f)}$,
with projection onto the first factor given by $\pi_2(f)$ and split by the umkehr map $f_!$.

Let $q:Q\to{Z}$ be the pullback of $P_3(f):P_3(X)\to{P_3(Z)}$ over
the inclusion of $Z$ into $P_3(Z)$. 
Then $q$ is a fibration with homotopy fibre $K(K_2(f),2)$ and $f=qg$, 
where $g:X\to{Q}$ and $P_3(g)$ is a homotopy equivalence.
Hence $\pi_2(g)$ is an isomorphism and $k_1(Q)=g_\#k_1(X)$.
This fibration is determined by a $k$-invariant in $H^3(Z;K_2(f))\cong{H_1(Z;K_2(f))}$,
which is 0 since $K_2(f)$ is projective.
Hence $k_1(Q)=g_\#f_{!\#}k_1(Z)$.
Therefore $k_1(X)=f_{!\#}k_1(Z)$, since $g_\#$ is an isomorphism,
and so $f_\#k_1(X)=f_\#f_{!\#}k_1(Z)=k_1(Z)$.

The second assertion follows easily from the fact that $\pi_2(f)$ 
is an epimorphism with kernel $K_2(f)$ 
a finitely generated projective direct summand of $\Pi=\pi_2(X)$ 
and the hypothesis $E^3\mathbb{Z}=0$, 
which implies that $H^3(\pi;K_2(f))=0$.
\end{proof}

In particular, if $X$ has a strongly minimal model then $k_1(X)$
derives from $H^3(\pi;E^2\mathbb{Z})$. 
Are there such examples with $k_1(X)\not=0$?
The simplest examples for testing that we have found are the groups $\pi=A^2_3*_CA^3_2$, 
where $A_n=\mathbb{Z}^n*\mathbb{Z}^n$ and $C$ is either trivial or $\mathbb{Z}^4$.
These groups have $c.d.\pi=6$.
Mayer-Vietoris arguments show that if $C=1$ then
$E^1\mathbb{Z}\cong{E^2}\mathbb{Z}\cong{E^3}\mathbb{Z}\cong\mathbb{Z}[\pi]$,
while if $C=\mathbb{Z}^4$ then $E^1\mathbb{Z}=0$ (i.e., $\pi$ has one end)
and ${E^2}\mathbb{Z}\cong{E^3}\mathbb{Z}\cong\mathbb{Z}[\pi]$.
In each case it follows that $H^3(\pi;E^2\mathbb{Z})\cong\mathbb{Z}[\pi]$.
These groups are right angled Artin groups.
Perhaps the ``smallest"  RAAG with similar cohomological properties
is the one given by the 1-skeleton of a minimal triangulation 
of $S^2\times{S^1}$, which has 10 generators and 40 relators
but is less easily described explicitly.
(This group has one end and $c.d.=4$.)

\section{reduction}

The main result of this section implies that when a $PD_4$-complex $X$ 
has a strongly minimal model $Z$ its homotopy type is determined 
by $Z$ and $\lambda_X$.

\begin{lemma}
\label{CPinfty}
Let $\beta_\xi=B_{\mathbb{Z}^n}(b_{(\mathbb{CP}^\infty)^n }(\xi))$, for
$\xi\in{H}_4((\mathbb{CP}^\infty)^n;\mathbb{Z})$, 
and let $G$ be a group.
Let $u=\Sigma{u_g}g$ and 
$v=\Sigma{v_h}h\in{H}^2((\mathbb{CP}^\infty)^n;\mathbb{Z}[G])\cong 
{H}^2((\mathbb{CP}^\infty)^n;\mathbb{Z})\otimes_\mathbb{Z}\mathbb{Z}[G]$.
Then ${v(u\cap{\xi})}=\Sigma_{g,h\in G}{\beta_\xi(u_g,v_h)}g\bar{h}$,
for all such $u,v$ and $\xi$.
\end{lemma}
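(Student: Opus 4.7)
The plan is to reduce the claim to the case $G=1$ via the K\"unneth decomposition of $\mathbb{Z}[G]$-coefficient (co)homology for the simply-connected space $(\mathbb{CP}^\infty)^n$, and then verify the resulting universal identity on a standard basis of $H_4((\mathbb{CP}^\infty)^n;\mathbb{Z})$.

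Since $(\mathbb{CP}^\infty)^n$ is simply connected there are natural decompositions $H^*((\mathbb{CP}^\infty)^n;\mathbb{Z}[G])\cong H^*((\mathbb{CP}^\infty)^n;\mathbb{Z})\otimes_{\mathbb{Z}}\mathbb{Z}[G]$ and the analogue for $H_*$, and cap product against $\xi\in H_4((\mathbb{CP}^\infty)^n;\mathbb{Z})$ respects these: $(u_g\otimes g)\cap\xi=g\cdot(u_g\cap\xi)$.  Both sides of the asserted identity are $\mathbb{Z}$-additive in $u$ and in $v$, so we may take $u=u_g g$ and $v=v_h h$ to be monomials.  With the standard convention that the Kronecker evaluation on $\mathbb{Z}[G]$-coefficients is $(v_h h)(x)=v_h(x)\bar h$ on an integer chain $x$, extended $\mathbb{Z}[G]$-linearly in the left argument, one reads off
\[
v(u\cap\xi)=v_h(u_g\cap\xi)\,g\bar h.
\]
The problem therefore reduces to the trivial-coefficient identity
\[
v_h(u_g\cap\xi)=\beta_\xi(u_g,v_h), \qquad u_g,v_h\in H^2((\mathbb{CP}^\infty)^n;\mathbb{Z}).
\]

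By the cap--cup adjunction the left-hand side equals $\langle v_h\cup u_g,\xi\rangle$.  Both sides are bilinear in $(u_g,v_h)$ and linear in $\xi$, so it suffices to check the identity on a basis.  Take the standard basis $t_1,\dots,t_n$ of $H^2((\mathbb{CP}^\infty)^n;\mathbb{Z})\cong\mathbb{Z}^n$ dual to the basis $e_1,\dots,e_n$ of $\pi_2((\mathbb{CP}^\infty)^n)$, and the basis of $H_4((\mathbb{CP}^\infty)^n;\mathbb{Z})$ consisting of classes $\eta_{ii}$ dual to $t_i^2$ together with $\eta_{ij}$ ($i<j$) dual to $t_it_j$.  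Since $K(\mathbb{Z}^n,2)$ has $\pi_3=\pi_4=0$, the Whitehead exact sequence forces $b=b_{(\mathbb{CP}^\infty)^n}$ to be an isomorphism $H_4((\mathbb{CP}^\infty)^n;\mathbb{Z})\cong\Gamma_W(\mathbb{Z}^n)$.  Naturality of $b$ applied to the inclusion of $\mathbb{CP}^2$ as the $i$-th coordinate sub-factor (using the classical identification $b_{\mathbb{CP}^\infty}([\mathbb{CP}^2])=\gamma(1)$, dual to the fact that the Hopf map generates $\pi_3(S^2)\cong\Gamma_W(\mathbb{Z})$) gives $b(\eta_{ii})=\gamma(e_i)$, while naturality applied to $S^2\times S^2\hookrightarrow(\mathbb{CP}^\infty)^n$ in positions $(i,j)$ gives $b(\eta_{ij})=s(e_i\odot e_j)$.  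Using the explicit formulas for $B_{\mathbb{Z}^n}$ on $\gamma(e_i)$ and on $s(e_i\odot e_j)$ computed in the proof of Theorem \ref{wh-herm}, the verification $\langle t_l\cup t_k,\eta_{pq}\rangle=\beta_{\eta_{pq}}(t_k,t_l)$ is a direct check on the four cases $p=q=k=l$, $p=q$ mixed, $\{p,q\}=\{k,l\}$, and incompatible.

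The principal obstacle is the book-keeping in the reduction to $G=1$: one must pin down the $\mathbb{Z}[G]$-linearity conventions for cap product and for the Kronecker pairing, together with the placement of the involution, so that the product ends up as $g\bar h$ rather than $\bar g h$ or $gh$.  Once this is done, the remaining universal identity on $(\mathbb{CP}^\infty)^n$ is essentially the compatibility between the polynomial-ring cup-product structure of $H^*((\mathbb{CP}^\infty)^n;\mathbb{Z})=\mathbb{Z}[t_1,\dots,t_n]$ and the Whitehead-functor description of $H_4K(\mathbb{Z}^n,2)\cong\Gamma_W(\mathbb{Z}^n)$.
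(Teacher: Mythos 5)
Your proof is correct and follows essentially the same route as the paper: reduce by multilinearity in $u$, $v$ and $\xi$ to the case $G=1$ and to generators of $H_4$ coming from $\mathbb{CP}^\infty$ and a product of two factors, then verify directly. The only cosmetic difference is that where the paper uses the polarization identity $2(x\otimes y)=(x+y)\otimes(x+y)-x\otimes x-y\otimes y$ to push the two-factor case down to $n=1$, you instead evaluate $B_{\mathbb{Z}^n}$ on $\gamma(e_i)$ and $s(e_i\odot e_j)$ explicitly; both come to the same computation.
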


\begin{proof}
As each side of the equation is linear in $\xi$ and 
$H_4((\mathbb{CP}^\infty)^n;\mathbb{Z})$ is generated 
by the images of homomorphisms 
induced by maps from $\mathbb{CP}^\infty$ or $(\mathbb{CP}^\infty)^2$, 
it suffices to assume $n=1$ or 2.
Since moreover each side of the equation is bilinear in $u$ and $v$ 
we may reduce to the case $G=1$. 
As these functions have integral values and 
$2(x\otimes y)=(x+y)\otimes(x+y)-x\otimes x-y\otimes y$ in
$H_4((\mathbb{CP}^\infty)^2;\mathbb{Z})$, for all $x,y\in\Pi\cong\mathbb{Z}^2$,
we may reduce further to the case $n=1$, which is easy.
\end{proof}

\begin{lemma}
\label{2dary}
Let $M$ be a finitely generated projective $\mathbb{Z}[\pi]$-module 
and $L=\!L_\pi(M,2)$.
The secondary boundary homomorphism $b_L$ determines an epimorphism
$b'$ from $H_4(L;\mathbb{Z}^w)$ to $\mathbb{Z}^w\otimes_{\mathbb{Z}[\pi]}\Gamma_W(M)$
such that  
\[B_M(b'(x))(u,v)=v(u\cap{x})\quad\mathrm{for}\,\mathrm{all}
\quad u,v\in M^\dagger\quad\mathrm{and}\quad x\in {H_4(L;\mathbb{Z}^w)}.\]
\end{lemma}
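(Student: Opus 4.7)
The plan is to exploit the structure of $L = L_\pi(M,2)$ as a principal $K(M,2)$-fibration over $K(\pi,1)$ with section $\sigma$, combined with the classical computation of $H_*(K(M,2);\mathbb{Z})$, to define $b'$ as a projection, and then to verify the formula by naturality from Lemma~\ref{CPinfty}.

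Since $M$ is projective over $\mathbb{Z}[\pi]$ it is free as an abelian group, so $\widetilde{L}=K(M,2)$ satisfies $H_1(\widetilde{L};\mathbb{Z})=H_3(\widetilde{L};\mathbb{Z})=0$ and $H_4(\widetilde{L};\mathbb{Z})\cong\Gamma_W(M)$. Together with $\pi_3(\widetilde{L})=\pi_4(\widetilde{L})=0$, the Whitehead exact sequence of $L$ collapses to show that $b_L:H_4(\widetilde{L};\mathbb{Z})\to\Gamma_W(M)$ is a $\pi$-equivariant isomorphism, inducing an isomorphism $1\otimes b_L:\mathbb{Z}^w\otimes_{\mathbb{Z}[\pi]}H_4(\widetilde{L};\mathbb{Z})\xrightarrow{\cong}\mathbb{Z}^w\otimes_{\mathbb{Z}[\pi]}\Gamma_W(M)$.

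Next, I analyse the Cartan--Leray spectral sequence $E^2_{p,q}=\mathrm{Tor}^{\mathbb{Z}[\pi]}_p(\mathbb{Z}^w,H_q(\widetilde{L};\mathbb{Z}))\Rightarrow H_{p+q}(L;\mathbb{Z}^w)$. Projectivity of $M$ kills $\mathrm{Tor}_i(\mathbb{Z}^w,M)$ for $i\geq 1$, and combined with the vanishing of $H_1$ and $H_3$ of $\widetilde{L}$ this annihilates every $E^2_{p,q}$ with $p+q=4$ and $0<p<4$. The section $\sigma$ kills every outgoing differential from the bottom row, in particular the potentially nonzero $d^5:E^5_{5,0}\to E^5_{0,4}$, so $E^\infty_{0,4}=E^2_{0,4}$, and the section provides a splitting
\[
H_4(L;\mathbb{Z}^w)\cong\bigl(\mathbb{Z}^w\otimes_{\mathbb{Z}[\pi]}H_4(\widetilde{L};\mathbb{Z})\bigr)\oplus H_4(\pi;\mathbb{Z}^w).
\]
Define $b'$ as the projection to the first summand composed with $1\otimes b_L$; surjectivity is then automatic.

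For the formula $B_M(b'(x))(u,v)=v(u\cap x)$, both sides are linear in $x$ and bilinear in $u,v\in M^\dagger$. If $x=\sigma_* y$ lies in the $H_4(\pi;\mathbb{Z}^w)$-summand then $b'(x)=0$; and by the projection formula $u\cap x=\sigma_*(\sigma^*u\cap y)$ lies in $\sigma_* H_2(\pi;\mathbb{Z}[\pi])=0$ (since $H_*(\pi;\mathbb{Z}[\pi])$ vanishes in positive degrees), so both sides vanish. On the other summand, each element is a sum of classes of the form $f_*[\mathbb{CP}^2]$ or $f_*[\mathbb{CP}^1\times\mathbb{CP}^1]$, where $f:(\mathbb{CP}^\infty)^n\to\widetilde{L}\to L$ is induced by a $\mathbb{Z}$-linear map $\mathbb{Z}^n\to M$ chosen to represent the relevant generators $\gamma_M(m)$ or $s(m\odot m')$ of $\Gamma_W(M)$; naturality of $B_M$, of $b'$, and of the cap product then reduces the identity to the computation of Lemma~\ref{CPinfty} on $(\mathbb{CP}^\infty)^n$. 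The main obstacle is this last reduction: one must verify that cap products on $L$ with $\mathbb{Z}[\pi]$-coefficients, pulled back along $f$, agree with those computed in Lemma~\ref{CPinfty}, which requires careful bookkeeping of the $\pi$-equivariance and of the bar involution implicit in $M^\dagger$.
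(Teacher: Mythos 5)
Your proposal is correct and follows essentially the same route as the paper: the Cartan--Leray spectral sequence plus the section $\sigma$ gives the splitting $H_4(L;\mathbb{Z}^w)\cong(\mathbb{Z}^w\otimes_{\mathbb{Z}[\pi]}H_4(\widetilde{L};\mathbb{Z}))\oplus H_4(\pi;\mathbb{Z}^w)$, $b'$ is defined by $b'(x)=(1\otimes b_{\widetilde L})(x-\sigma_*c_{L*}x)$, and the identity is checked by pulling back along maps from $(\mathbb{CP}^\infty)^n$ and invoking Lemma~\ref{CPinfty} together with the projection formula and $H_2(\pi;\mathbb{Z}[\pi])=0$. The ``main obstacle'' you flag at the end is handled in the paper exactly as you anticipate: writing $k^*u=\Sigma u_g g$, $k^*v=\Sigma v_h h$ and comparing $ev_M(k_*\xi)(u,v)=\Sigma\beta_\xi(u_g,v_h)g\bar h$ with $k^*v(k^*u\cap\xi)$.
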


\begin{proof}
The homomorphism from $H_4(L;\mathbb{Z}^w)$ to ${H}_4(\pi;\mathbb{Z}^w)$
induced by $c_L$ is an a epimorphism, since $c_L$ has a section $\sigma$.
Since $\widetilde{L}\simeq{K(M,2)}$ the homomorphism $b_{\widetilde{L}}$ 
is an isomorphism and $H_3(\widetilde{L};\mathbb{Z})=0$,
while since $M$ is projective $H_p(\pi;M)=0$ for all $p>0$.
Therefore it follows from the Cartan-Leray spectral sequence of the covering
$\widetilde{L}\to{L}$ that the kernel of the epimorphism induced by $c_L$
is $\mathbb{Z}^w\otimes_{\mathbb{Z}[\pi]}H_4(\widetilde{L};\mathbb{Z})$.
Let $b'(x)=(1\otimes{b}_{\widetilde{L}})(x-\sigma_*c_{L*}(x))$ for all 
$x\in{H_4(L;\mathbb{Z}^w)}$.
Then $b'$ is an epimorphism onto
$\mathbb{Z}^w\otimes_{\mathbb{Z}[\pi]}\Gamma_W(M)$.

Let $x\in{H_4(L;\mathbb{Z}^w)}$ and
$u,v\in{M}^\dagger\cong{H}^2(L;\mathbb{Z}[\pi])$.
Since $M$ is the union of its finitely generated free abelian subgroups
and homology commutes with direct limits there is an $n>0$ and a map 
$k:(\mathbb{CP}^\infty)^n\to\widetilde{L}$ such that $b'(x)$ 
is the image of $k_*(\xi)$
for some $\xi\in H_4((\mathbb{CP}^\infty)^n;\mathbb{Z})$.
Then $B_M(b'(x))(u,v)=ev_M(k_*\xi)(u,v)$.

Suppose that $k^*u=\Sigma{u_g}g$ and $k^*v=\Sigma{v_h}h$ in
$H^2((\mathbb{CP}^\infty)^n;\mathbb{Z}[\pi])$.
Then we have $ev_M(k_*\xi)(u,v)=\Sigma_{g,h\in G}{\beta_\xi(u_g,v_h)}g\bar{h}$,
which is equal to $v(u\cap{k_*\xi})=k^*v(k^*u\cap\xi)$, 
by Lemma \ref{CPinfty}.
Now $x=k_*\xi+\sigma^*u\cap{c_{L*}}x$
and $u\cap\sigma_*c_{L*}x=\sigma_*(\sigma^*u\cap{c_{L*}}x)=0$,
since $H_2(\pi;\mathbb{Z}[\pi])=0$.
Hence $B_M(b'(x))(u,v)=v(u\cap{x})$,
for all $u,v\in M^\dagger$ and $x\in {H_4(L;\mathbb{Z}^w)}$.
\end{proof}

\begin{theorem}
\label{thm06}
Let $g_X:X\to Z$ and $g_Y:Y\to Z$ be $2$-connected degree-$1$ maps 
of $PD_4$-complexes with fundamental group $\pi$.
If $w=w_1(Z)$ is trivial on elements of order $2$ in $\pi$ then there is 
a homotopy equivalence $h:X\to Y$ such that $g_Yh=g_X$ if and only if 
$\lambda_{g_X}\cong\lambda_{g_Y}$ (after changing the sign of $[Y]$,
if necessary).
\end{theorem}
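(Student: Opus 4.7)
\noindent\emph{Proof plan.}

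The \emph{only if} direction is routine: a homotopy equivalence $h:X\to Y$ with $g_Yh\simeq g_X$ restricts to an isomorphism $K_2(g_X)\cong K_2(g_Y)$ intertwining $\lambda_{g_X}$ and $\pm\lambda_{g_Y}$. For the converse, Theorem~5.2 of \cite{Wa} supplies splittings $\pi_2(X)\cong g_{X!}\pi_2(Z)\oplus K_2(g_X)$ and $\pi_2(Y)\cong g_{Y!}\pi_2(Z)\oplus K_2(g_Y)$, with each second summand an orthogonal complement for the intersection pairing. The hypothesised isometry $\Theta:K_2(g_X)\cong K_2(g_Y)$, together with the identification $g_{Y!}\circ g_{X!}^{-1}$ of the first summands, assembles into a $\mathbb{Z}[\pi]$-isomorphism $\alpha:\pi_2(X)\cong\pi_2(Y)$. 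The $k$-invariant compatibility $k_1(X)=(g_{X!})_\# k_1(Z)$, $k_1(Y)=(g_{Y!})_\# k_1(Z)$ forces $\alpha_\# k_1(X)=k_1(Y)$, so $\alpha$ lifts to a homotopy equivalence $H:P_2(X)\to P_2(Y)$ which commutes, up to homotopy, with the projections $P_2(g_X)$ and $P_2(g_Y)$ to $P_2(Z)$.

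By the Baues--Bleile theorem it now suffices to arrange $H_*f_{X*}[X]=f_{Y*}[Y]$ in $H_4(P_2(Y);\mathbb{Z}^w)$. The difference $\xi:=H_*f_{X*}[X]-f_{Y*}[Y]$ has $c_{P_2(Y)*}\xi=0$, since $c_{X*}[X]=c_{Z*}[Z]=c_{Y*}[Y]$. Because $\pi_3(P_2(Y))=\pi_4(P_2(Y))=0$, the cover $\widetilde{P_2(Y)}$ is a $K(\pi_2(Y),2)$ and $b_{\widetilde{P_2(Y)}}$ is an isomorphism onto $\Gamma_W(\pi_2(Y))$; the Cartan--Leray argument of Lemma \ref{2dary} then produces a secondary class $b'(\xi)\in\mathbb{Z}^w\otimes_{\mathbb{Z}[\pi]}\Gamma_W(\pi_2(Y))$, well defined up to a controlled indeterminacy coming from the lower spectral-sequence terms. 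Lemma \ref{2dary} further identifies $B_{\pi_2(Y)}(b'(\xi))$ with the $w$-hermitian discrepancy $H^*\lambda_X-\lambda_Y$ on $\pi_2(Y)^\dagger$; this discrepancy vanishes, because both pairings restrict to $\lambda_Z$ on $g_{Y!}\pi_2(Z)^\dagger$, they agree on $K_2(g_Y)^\dagger$ by the isometry $\Theta$, and the cross-terms are zero by Wall's orthogonal splitting applied to both $\lambda_X$ and $\lambda_Y$.

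The hypothesis that $w$ is trivial on elements of order 2 in $\pi$ now activates the injectivity half of Theorem \ref{wh-herm}, forcing $b'(\xi)=0$. Any residual indeterminacy is killed by composing $H$ with a self-equivalence $\phi=\alpha_\theta$ of $P_2(Y)$ of the type produced by Lemma \ref{alphatheta}, with $\theta:K_2(g_Y)\to g_{Y!}\pi_2(Z)$ chosen so that $\phi$ fixes the $g_{Y!}\pi_2(Z)$-summand pointwise---maintaining compatibility with $P_2(g_Y)$---while correcting the image of the fundamental class by exactly $\xi$. Baues--Bleile, carried out relatively over $Z$ so as to preserve the $P_2(g_Y)\circ\phi H=P_2(g_X)$ identity, produces a homotopy equivalence $h:X\to Y$ with $g_Yh\simeq g_X$. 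I expect the principal obstacle to be the second paragraph above: converting an isometry of the \emph{surgery-kernel} pairings into vanishing of a $\Gamma_W$-valued secondary obstruction on all of $\pi_2(Y)$, since one must marshal Wall's orthogonality together with the $k$-invariant hypothesis to handle the cross-terms; the 2-torsion hypothesis on $w$ is precisely the leverage supplied by Theorem \ref{wh-herm} that turns this vanishing into triviality of $b'(\xi)$.
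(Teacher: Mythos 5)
Your overall architecture matches the paper's (split $\pi_2$ via Wall, identify the Postnikov $2$-stages, correct the image of the fundamental class, then produce $h$), but the central second paragraph has a genuine gap. You apply Lemma \ref{2dary} and the injectivity half of Theorem \ref{wh-herm} to the \emph{full} module $\pi_2(Y)\cong M\oplus N$ with $M=K_2(g_Y)$ and $N=\pi_2(Z)$. Theorem \ref{wh-herm} is stated, and proved, only for finitely generated projective modules, and $N$ need not be projective: in the paper's main application $Z$ is strongly minimal with $N\cong E^2\mathbb{Z}$, which for $c.d.\pi=2$ satisfies $N^\dagger=0$ and is not projective. Two things then fail. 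First, the secondary class $b'(\xi)$ is not well defined on $H_4(L_\pi(\Pi,2);\mathbb{Z}^w)$, because the Cartan--Leray argument of Lemma \ref{2dary} needs $H_p(\pi;\Pi)=0$ for $p>0$; your ``controlled indeterminacy'' ($E^2_{2,2}=H_2(\pi;\mathbb{Z}^w\otimes\Pi)$, etc.) is never actually controlled. Second, and more seriously, even if $b'(\xi)$ were defined, vanishing of the hermitian discrepancy on $\pi_2(Y)^\dagger$ does \emph{not} force $b'(\xi)=0$: in the splitting $\Gamma_W(\Pi)\cong\Gamma_W(M)\oplus(M\otimes N)\oplus\Gamma_W(N)$ the evaluation pairing only sees the $\Gamma_W(M)$ summand when $N^\dagger=0$, so the cross-term in $M\otimes N$ is a genuine, possibly nonzero discrepancy that is invisible to the intersection pairings. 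It is not a ``residual indeterminacy''.

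The paper's proof is engineered around exactly these two points. It projects $P=P_2(X)$ onto $L_\pi(M,2)$ with $M=K_2(g_X)$ \emph{projective}, so that Lemma \ref{2dary} and the injectivity of $B_M$ legitimately apply and control only the $\Gamma_W(M)$ component of the difference of fundamental classes. The $M\otimes N$ component is then killed by the self-equivalence $P(\theta)$ of Lemma \ref{alphatheta}, where the existence of a suitable $\theta:M\to N$ uses the \emph{surjectivity} of $\widetilde{B_M}(\gamma)$, i.e.\ the nonsingularity of $\lambda_{g_X}$ --- an input your sketch never invokes. Finally the $\Gamma_W(N)$ component is disposed of using $g_{X*}[X]=g_{Y*}[Y]$ in $H_4(Z;\mathbb{Z}^w)$ together with the section $s:P_2(Z)\to P$, again absent from your argument. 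To repair your proof you would need to restructure the second paragraph along these lines; you cannot get the vanishing of the whole $\Gamma_W(\Pi)$-valued obstruction from the isometry of pairings alone.
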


\begin{proof}
The condition $\lambda_{g_X}\cong\lambda_{g_Y}$ is clearly necessary.
Suppose that it holds.

Since $g_X$ and $g_Y$ induce isomorphisms on $\pi_1$,
we may assume that $c_X=c_Zg_X$ and $c_Y=c_Zg_Y$.
Since $g_X$ and $g_Y$ are 2-connected degree-1 maps,
there are canonical splittings $\pi_2(X)={K_2(g_X)}\oplus{N}$ and
$\pi_2(Y)={K_2(g_Y)}\oplus{N}$, where $N=\pi_2(Z)$,
and $K_2(g_X)$ and $K_2(g_Y)$ are projective.
The projections $\pi_2(g_X)$ and $\pi_2(g_Y)$ onto the second factors 
are split by the umkehr homomorphisms.
We may identify $K_2(g_X)^\dagger$ and $K_2(g_Y)^\dagger$ with direct summands of 
$H^2(X;\mathbb{Z}[\pi])$ and $H^2(X;\mathbb{Z}[\pi])$, respectively,
by Lemma 2.2 of \cite{Wa}.
The homomorphism $\theta$ induces an isomorphism $K_2(Y)\cong{M}=K_2(X)$ such that 
$\lambda_{g_Y}=\lambda_{g_X}$ as pairings on $M^\dagger\times{M^\dagger}$.
Hence $\pi_2(X)\cong\pi_2(Y)\cong{\Pi=M\oplus{N}}$.
We may also assume that $M\not=0$, 
for otherwise $g_X$ and $g_Y$ are homotopy equivalences.

Let $g:P=P_2(X)\to{P_2(Z)}$ be the 2-connected map induced by $g_X$.
Then $g$ is a fibration with fibre $K(M,2)$,
and the inclusion of $N$ as a direct summand of $\Pi$ 
determines a section $s$ for $g$.
Since $\pi_2(X)\cong\pi_2(Y)$,
and $k_1(X)=(g_{X!})_\#(k_1(Z))$ and  $k_1(Y)=(g_{Y!})_\#(k_1(Z))$,
by Lemma \ref{sharp}, we see that $P_2(Y)\simeq{P}$.
We may choose the homotopy equivalence so that composition with $g$ 
is homotopic to the map induced by $g_Y$.
(This uses our knowledge of $E_\pi(P)$, as recorded in \S3 above.)

The splitting $\Pi=M\oplus{N}$ also determines a projection 
$q:P\to L=L_\pi(M,2)$.
We may construct $L$ by adjoining 3-cells to $X$ to kill 
the kernel of projection from $\Pi$ onto $M$
and then adjoining higher dimensional cells to kill the higher homotopy.
Let $j:X\to L$ be the inclusion.
Then $B_M(b'(j_*[X]))(u,v)=v(u\cap{j_*[X]})$ for all $u,v\in{M}^\dagger$, 
by Lemma \ref{2dary}.
Using the projection formula and identifying $M^\dagger=H^2(L;\mathbb{Z}[\pi])$
with $K^2(X)$ we may equate this with $\lambda_{g_X}(u,v)$.
Hence $f_{X*}[X]$ and $f_{Y*}[Y]$ have the same image $\lambda_{g_X}= 
\lambda_{g_Y}$ in $Her_w(M^\dagger)$.

Since $P_2(Z)$ is a retract of $P$ comparison of the Cartan-Leray spectral
sequences for the classifying maps $c_P$ and $c_{P_2(Z)}$ shows that 
\[\mathrm{Cok}(H_4(s;\mathbb{Z}^w))\cong
\mathbb{Z}^w\otimes_{\mathbb{Z}[\pi]}(\Gamma_W(\Pi)/\Gamma_W(N)).\]
Since $\pi$ has no orientation reversing element of order 2 the homomorphism 
$B_M$ is injective, by Theorem \ref{wh-herm}, and therefore since
$\lambda_{g_X}=\lambda_{g_Y}$ the images of 
$f_{X*}[X]$ and $f_{Y*}[Y]$ in 
$\mathbb{Z}^w\otimes_{\mathbb{Z}[\pi]}(\Gamma_W(\Pi)/\Gamma_W(N))$
differ by an element of the subgroup 
${\mathbb{Z}^w\otimes_{\mathbb{Z}[\pi]}(M\otimes{N})}$.
Let $c\in{M\otimes{N}}$ represent this difference, 
and let $\gamma\in\Gamma_W(M)$ represent $b'(f_{X*}[X])$.
Since $B_M(1\otimes\gamma)=\lambda_{g_X}$
is nonsingular $\widetilde{B_M}(\gamma)$ is surjective, 
and so we may choose a homomorphism $\theta:M\to{N}$ 
such that 
$(\widetilde{B_M}(\gamma)\otimes1)(t^{-1}(\theta))=(d\otimes1)(c)$.
Hence
$\Gamma_W(\alpha_\theta)(\gamma)-\gamma\equiv{c}$ mod $\Gamma_W(N)$,
by Lemma \ref{alphatheta}.
Let $P(\theta)$ be the corresponding self homotopy equivalence of $P$.
Then
$gP(\theta)=g$ and $P(\theta)_*f_{Y*}[Y]=f_{X*}[X]$ {\it mod} 
$\mathbb{Z}^w\otimes_{\mathbb{Z}[\pi]}\Gamma_W(N)$.
Since $g_{X*}[X]=g_{Y*}[Y]$ in $H_4(Z;\mathbb{Z}^w)$ and hence
$(gf_X)_*[X]=(gf_Y)_*[Y]$ in $H_4(P_2(Z);\mathbb{Z}^w)$ it
follows that $P(\theta)_*f_{Y*}[Y]=f_{X*}[X]$ in $H_4(P;\mathbb{Z}^w)$.

There is then a map $h:X\to Y$ with $f_Yh=f_X$, 
by the argument of Lemma 1.3 of \cite{HK88}. 
Since the orientation characters of $X$ and $Y$ are compatible,
$h$ lifts to a map $h^+:X^+\to Y^+$.
Since $f_X$ and $f_Y$ are 3-connected $\pi_1(h^+)$, $\pi_2(h^+)$ and 
$H_2(h^+;\mathbb{Z})$ are isomorphisms.
Since $M$ is projective and nonzero, $\mathbb{Z}\otimes_{\mathrm{Ker}(w)}M$ 
is a nontrivial torsion free direct summand of $H_2(X^+;\mathbb{Z})$,
and so $h^+$ has degree 1, by Poincar\'e duality.
Hence $h^+$ is a homotopy equivalence, and therefore so is $h$.
\end{proof}

The original version of this result (Theorem 11 of \cite{Hi06}) 
assumed that $k_1(X)=k_1(Y)=0$.
This was relaxed to the condition that ``$k_1(X)=(g_{X!})_\#k_1(Z)$ and
$k_1(Y)=(g_{Y!})_\#k_1(Z)$" in v2 of the present article
(put on the arXiv on 8 October 2013).
The final step is due to Hegenbarth, Pamuk and Repov\v s,
who noted that Poincar\'e duality in $Z$ may be used to establish an equivalent condition \cite{HPR}.
(This observation has been used in the current version of Lemma \ref{sharp}
above.)

The argument for Theorem \ref{thm06} breaks down 
when $\pi=Z/2Z$ and $w$ is nontrivial, for then 
$B_M:\mathbb{Z}^w\otimes_{\mathbb{Z}[\pi]}\Gamma_W(M)\to{Her}_w(M^\dagger)$ 
is no longer injective,
and the intersection pairing is no longer a complete invariant \cite{HKT4}.
Thus the condition on 2-torsion is in general necessary. 

\begin{cor}
If $X$ has a strongly minimal model $Z$ and $\pi$ has no $2$-torsion 
then the homotopy type of $X$ is determined by $Z$ and $\lambda_X$.
\qed
\end{cor}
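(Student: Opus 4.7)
The plan is to reduce the corollary directly to Theorem \ref{thm06}. Suppose $X'$ is another $PD_4$-complex satisfying the same hypotheses (fundamental group identified with $\pi$ preserving $w$, strongly minimal model homotopy equivalent to $Z$, trivial first $k$-invariant) and with $\lambda_{X'}\cong\lambda_X$; the goal is to produce a homotopy equivalence $X\simeq X'$.

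After composing with a homotopy equivalence between the strongly minimal models, fix $2$-connected degree-$1$ maps $g_X\colon X\to Z$ and $g_{X'}\colon X'\to Z$, chosen compatibly with the isomorphisms used to express the isometry $\lambda_X\cong\lambda_{X'}$. I then verify the three hypotheses of Theorem \ref{thm06} in turn. First, since $\pi$ has no $2$-torsion, $w=w_1(Z)$ is vacuously trivial on elements of order $2$. Second, Lemma \ref{sharp} applied to $g_X$ gives $k_1(Z)=\pi_2(g_X)_\# k_1(X)=0$; hence $(g_{X!})_\# k_1(Z)=0=k_1(X)$ and likewise $(g_{X'!})_\# k_1(Z)=0=k_1(X')$, providing the required $k$-invariant compatibility on both sides. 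Third, the observation recorded in \S5 that $\lambda_f=\lambda_X$ whenever $f\colon X\to Z$ is a $2$-connected degree-$1$ map with $Z$ strongly minimal gives $\lambda_{g_X}=\lambda_X\cong\lambda_{X'}=\lambda_{g_{X'}}$.

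Theorem \ref{thm06} then supplies a homotopy equivalence $h\colon X\to X'$ with $g_{X'}h\simeq g_X$, which finishes the proof. The only real work is bookkeeping: arranging the identifications of $Z$, $\pi$, and $\pi_2$ so that the abstract isometry $\lambda_X\cong\lambda_{X'}$ (which involves an isomorphism $\theta\colon\pi_1(X)\cong\pi_1(X')$ with $w_1(X')\theta=w_1(X)$ and a $\theta$-semilinear isomorphism on $\pi_2$) translates into the identification of pairings required in Theorem \ref{thm06}. I expect no serious obstacle; the substantive content was already extracted in Theorem \ref{thm06}, and the corollary is essentially a translation of its hypotheses into the language of strongly minimal models via the vanishing $k_1(Z)=0$ deduced from Lemma \ref{sharp}.
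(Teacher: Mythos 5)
Your proposal is correct and is essentially the paper's (unwritten) argument: the corollary is stated with no separate proof precisely because it follows immediately from Theorem \ref{thm06} once one notes that the no-$2$-torsion hypothesis makes the condition on $w$ vacuous, that $k_1(X)=0$ forces $k_1(Z)=0$ via Lemma \ref{sharp} and hence the $k$-invariant compatibility $(g_{X!})_\#k_1(Z)=0=k_1(X)$, and that $\lambda_{g_X}=\lambda_X$ for a $2$-connected degree-$1$ map to a strongly minimal target. Your bookkeeping of these three points is exactly what is needed.
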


\begin{cor}
\cite{HRS}
If $g:X\to Z$ is a $2$-connected degree-$1$ map of $PD_4$-complexes such that 
$w_1(Z)$ is trivial on elements of order $2$ in $\pi_1(Z)$
then $X$ is homotopy equivalent to $M\#Z$ with $M$  $1$-connected 
if and only if $\lambda_g$ is extended from a nonsingular pairing 
over $\mathbb{Z}$.
\qed
\end{cor}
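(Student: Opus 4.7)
The only-if direction is a direct calculation on connect sums. If $X \simeq M \# Z$ with $M$ a $1$-connected $PD_4$-complex, the pinch map $p: M \# Z \to Z$ is a $2$-connected degree-$1$ map whose surgery kernel may be computed as follows. The universal cover $\widetilde{M \# Z}$ is obtained from $\widetilde{Z}$ by attaching a copy of $M \setminus D^4$ along each lift of the connecting $3$-sphere, so $K_2(p) \cong \mathbb{Z}[\pi] \otimes_\mathbb{Z} H_2(M;\mathbb{Z})$. Since distinct translates of $M$ in $\widetilde{M \# Z}$ are disjoint, one reads off that $\lambda_p(u\otimes g,\,v\otimes h) = \lambda_M(u,v)\,g\bar h$, the $w$-hermitian extension of the nonsingular intersection form $\lambda_M$. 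Any homotopy equivalence $X \simeq M \# Z$ identifies $g$ with $p$ up to a self-equivalence of $Z$, so $\lambda_g$ is extended from $\lambda_M$ over $\mathbb{Z}$.

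For the converse, assume $\lambda_g$ is the $w$-hermitian extension of a nonsingular symmetric bilinear form $\lambda_0$ on a finitely generated free abelian group $V \cong \mathbb{Z}^r$. Realize $\lambda_0$ as the intersection form of a $1$-connected $PD_4$-complex $M$ by attaching a single $4$-cell to $\vee^r S^2$ along an element of $\pi_3(\vee^r S^2) \cong \Gamma_W(\mathbb{Z}^r)$ whose image under $B_{\mathbb{Z}^r}$ is $\lambda_0$; non-singularity of $\lambda_0$ ensures Poincar\'e duality holds for $M$. Form $Y = M \# Z$; the pinch map $g_Y: Y \to Z$ is a $2$-connected degree-$1$ map, and the computation of the previous paragraph identifies $\lambda_{g_Y}$ as the extension of $\lambda_M = \lambda_0$, so $\lambda_{g_Y} \cong \lambda_g$.

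To apply Theorem \ref{thm06} I verify the $k$-invariant hypotheses. By Lemma \ref{sharp}, $k_1(Z) = g_\#(k_1(X)) = 0$, hence $(g_!)_\# k_1(Z) = 0 = k_1(X)$. For $Y$, a cellular model has $3$-skeleton $Y_o \simeq (\vee^r S^2) \vee Z_o$, where $Z_o$ is a $3$-skeleton of $Z$; the obvious retraction $Y_o \to Z_o$ collapsing $\vee^r S^2$ to the basepoint exhibits $Z_o$ as a retract of $Y_o$. Since $P_2$ depends only on the $3$-skeleton, $P_2(Z)$ is a retract of $P_2(Y)$, and the section $K(\pi,1) \to P_2(Z)$ arising from $k_1(Z) = 0$ yields a section of $c_{P_2(Y)}$, so $k_1(Y) = 0 = (g_{Y!})_\# k_1(Z)$. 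The remaining hypothesis that $w_1(Z)$ is trivial on $2$-torsion is given, and Theorem \ref{thm06} produces a homotopy equivalence $h: X \to Y = M \# Z$. The chief subtlety is the vanishing of $k_1(Y)$, which reduces cleanly to the retraction argument on $3$-skeleta; the rest is bookkeeping around the Milnor--Whitehead realization of $\lambda_0$ and the identification of the intersection pairing on a connect sum as the $w$-hermitian extension of $\lambda_M$.
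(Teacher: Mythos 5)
Your ``if'' direction is exactly the derivation the paper intends (the corollary carries only a \verb|\qed|, being presented as an immediate consequence of Theorem \ref{thm06}): realize the nonsingular integral form $\lambda_0$ by a $1$-connected complex $M=(\vee^rS^2)\cup e^4$, take $Y=M\#Z$ with the pinch map $g_Y$, check that $\lambda_{g_Y}$ is the $w$-hermitian extension of $\lambda_0$, verify the $k$-invariant hypotheses, and invoke Theorem \ref{thm06}. Your verification of those hypotheses is the right one and is carried out correctly: Lemma \ref{sharp} forces $k_1(Z)=0$ from $k_1(X)=0$, and the retract $P_2(Z)\hookrightarrow P_2(M\#Z)$ (coming from $Z_o$ sitting inside the $3$-skeleton $(\vee^rS^2)\vee Z_o$) transports the section of $c_{P_2(Z)}$ to one of $c_{P_2(Y)}$, giving $k_1(Y)=0$. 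This is a complete argument for the implication that matters, and it is the same route as the paper's.

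The one genuine soft spot is in your ``only if'' direction, in the sentence ``any homotopy equivalence $X\simeq M\#Z$ identifies $g$ with $p$ up to a self-equivalence of $Z$.'' A homotopy equivalence $\phi:X\to M\#Z$ only produces a \emph{second} $2$-connected degree-$1$ map $p\phi:X\to Z$, whose kernel pairing $\lambda_{p\phi}$ is extended; it does not identify $p\phi$ with $g$, and two such maps need not have isometric kernels a priori. What one actually knows is $\lambda_g\perp\lambda_Z\cong\lambda_X\cong\lambda_{p\phi}\perp\lambda_Z$, and passing from this to $\lambda_g\cong\lambda_{p\phi}$ is a cancellation statement that fails in general over group rings. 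It can be repaired in the cases of interest: if $Z$ is strongly minimal and $\lambda_g$ is nonsingular, then $g_!\pi_2(Z)$ is exactly the radical of $\lambda_X$, so $(K_2(g),\lambda_g)$ is canonically $\pi_2(X)$ modulo the radical and is independent of $g$; alternatively, Theorem \ref{thm06} produces the equivalence $h$ with $g_Yh=g_X$, so one may read the corollary as an equivalence over $Z$, which makes the ``only if'' direction immediate from your pinch-map computation. You should either add one of these remarks or restrict the claim accordingly.
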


The result of \cite{HRS} assumes that $X$ is orientable, $\pi$ is infinite and
either $E^2\mathbb{Z}=0$ or $\pi$ acts trivially on $\pi_2(Z)$.
(In the latter case $Hom_{\mathbb{Z}[\pi]}(\pi_2(Z),\mathbb{Z}[\pi])=0$,
and so $Z$  is strongly minimal.)

\section{realization of pairings}

In this short section we shall show that if $Z$ is a strongly minimal
$PD_4$-complex and $\mathrm{Ker}(w)$ has no element of order $2$
every nonsingular $w$-hermitean pairing on a finitely generated
projective $\mathbb{Z}[\pi]$-module is realized as $\lambda_X$ for some 
$PD_4$-complex $X$ with minimal model $Z$.
This is an immediate consequence of the following stronger result.

\begin{theorem}
\label{constr}
Let $Z$ be a $PD_4$-complex with fundamental group $\pi$ and let $w=w_1(Z)$.
Assume that $\mathrm{Ker}(w)$ has no element of order $2$.
Let $N$ be a finitely generated projective $\mathbb{Z}[\pi]$-module
and $\Lambda$ be a nonsingular $w$-hermitean pairing on $N^\dagger$.
Then there is a $PD_4$-complex $X$ and a $2$-connected degree-$1$ map 
$f:X\to{Z}$ such that $\lambda_f\cong\Lambda$.
\end{theorem}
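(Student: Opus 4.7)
The plan is to realize $\Lambda$ by an explicit construction: enlarge a $3$-skeleton of $Z$ by wedging on enough $2$-cells to add $N$ as a summand of $\pi_2$, then attach a single $4$-cell whose attaching map carries the Whitehead-product datum encoding $\Lambda$.

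First I would pass from $\Lambda$ to a Whitehead-functor class. Since $\mathrm{Ker}(w)$ has no element of order $2$, Theorem \ref{wh-herm} yields the surjectivity of $B_N:\mathbb{Z}^w\otimes_{\mathbb{Z}[\pi]}\Gamma_W(N)\to Her_w(N^\dagger)$, so I choose $\gamma\in\mathbb{Z}^w\otimes_{\mathbb{Z}[\pi]}\Gamma_W(N)$ with $B_N(\gamma)=\Lambda$.

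Next, by Wall's finiteness result quoted in \S4, write $Z\simeq Z_0\cup_{\phi_Z}e^4$ with $\dim Z_0\le 3$ and $\phi_Z\in\pi_3(Z_0)$. If $N\cong\mathbb{Z}[\pi]^s$ is free of finite rank, form the $3$-complex $Y_0=Z_0\vee\bigvee_{i=1}^sS^2$, so that $\pi_2(Y_0)\cong\pi_2(Z_0)\oplus N$. Since $H_4(\widetilde{Y_0};\mathbb{Z})=0$, the Whitehead exact sequence embeds $\Gamma_W(\pi_2(Y_0))$ into $\pi_3(Y_0)$; let $w(\gamma)\in\pi_3(Y_0)$ denote the image of $\gamma\in\Gamma_W(N)\subset\Gamma_W(\pi_2(Y_0))$. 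I then set $X=Y_0\cup_{i_\ast\phi_Z+w(\gamma)}e^4$, where $i:Z_0\hookrightarrow Y_0$. When $N$ is merely projective, I use the countable stabilisation trick of Theorem \ref{exist}: wedge $Z_0$ with countably many $2$-spheres and attach countably many $3$-cells to kill a complement $Q$ of $N$ in a free module of countable rank, producing a $3$-complex $Y_0$ with $\pi_2(Y_0)\cong\pi_2(Z_0)\oplus N$, and proceed as before.

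I would then check that the collapse $f:X\to Z$ (sending each added $S^2$ to the basepoint and the new $4$-cell to the $4$-cell of $Z$) is a $2$-connected degree-$1$ map with $K_2(f)\cong N$. Since the Whitehead-product summand $w(\gamma)$ has zero Hurewicz image, the cellular boundary of the new $4$-cell coincides with that of $Z$'s, so the equivariant chain complex of $\widetilde X$ is obtained from that of $\widetilde Z$ by inserting $N$ as an extra summand of $C_2$, and Poincar\'e duality for $X$ reduces to that of $Z$ together with the nonsingular pairing $B_N(\gamma)=\Lambda$ on the new summand.

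The main obstacle is to identify $\lambda_f$ with $\Lambda$: namely, to prove that $v(u\cap[X])=B_N(\gamma)(u,v)$ for all $u,v\in N^\dagger\cong K^2(f)$. The left-hand side is determined, on the chain level, by the Whitehead-product contribution $w(\gamma)$ to the $4$-cell attaching map. Reducing to generators $\gamma_N(n)$ of $\Gamma_W(N)$ via the universal quadratic property, this identification is the chain-level content of Lemma \ref{2dary}, whose formula $B_M(b'(x))(u,v)=v(u\cap x)$, applied to the Eilenberg-Mac Lane space $L_\pi(N,2)$ in which $w(\gamma)$ naturally lives, gives the required equality $\lambda_f=B_N(\gamma)=\Lambda$.
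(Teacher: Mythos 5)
Your proposal is correct and follows essentially the same route as the paper: wedge $2$-spheres (and, in the projective case, attach $3$-cells) onto a $3$-skeleton $Z_o$ of $Z$ to insert $N$ into $\pi_2$, use the surjectivity of $B_N$ from Theorem \ref{wh-herm} to choose the extra datum for the $4$-cell, attach a single $4$-cell, and verify duality via the projection formula plus nonsingularity of $\Lambda$ in degree $2$. The only (harmless) difference is that you take the attaching map to be $i_*\phi_Z+w(\gamma)$ with $w(\gamma)$ an explicit Whitehead-product element coming from $\Gamma_W(N)$, whereas the paper chooses an arbitrary $\psi\in\pi_3(X_o)$ with $B_N([j(\psi)])=\Lambda$ and then corrects it to $\phi=\psi-iq\psi+i\theta$; since $q_*w(\gamma)=0$, these produce the same class.
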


\begin{proof}
Suppose $N\oplus F_1\cong F_2$, where $F_1$ and $F_2$ are free
$\mathbb{Z}[\pi]$-modules with countable bases $I$ and $J$, respectively.
(These may be assumed finite if $N$ is stably free.)
We may assume $Z=Z_o\cup_\theta{e^4}$ is obtained by attaching a single
4-cell to a 3-complex $Z_o$, by Lemma 2.9 of \cite{Wa}.
Construct a 3-complex $X_o$ with $\pi_2(X_o)\cong\pi_2(Z_o)\oplus{N}$ 
by attaching $J$ 3-cells to $Z_o\vee(\vee^IS^2)$, 
along sums of translates under $\pi$ of 
the 2-spheres in $\vee^IS^2$, 
as in Theorem \ref{exist}.
Let $i:Z_o\to X_o$ be the natural inclusion.
Collapsing $\vee^IS^2$ gives $X_o/\vee^IS^2\simeq Z_o\vee(\vee^JS^3)$, 
and so there is a retraction $q:X_o\to Z_o$.
Let $p:\Pi=\pi_2(X_o)\to{N}$ be the projection with kernel
$\mathrm{Im}(\pi_2(i))$,
and let $j:X_o\to L=L_\pi(N,2)$ be the corresponding map.
Then $\pi_2(ji)=0$ and so $ji$ factors through $K(\pi,1)$.
The map $B_N:\mathbb{Z}^w\otimes_{\mathbb{Z}[\pi]}\Gamma_W(N)\to{Her}_w(N^\dagger)$
is an epimorphism, by Theorem \ref{wh-herm}.
Therefore we may choose $\psi\in\pi_3(X_o)$ so that $B_N([j(\psi)])=\Lambda$.

Let $\phi=\psi-iq\psi+i\theta$. 
Then $q\phi=\theta$ and $j(\phi)=j(\psi)$,
so $B_N([j(\phi)])=\Lambda$.
Let $X=X_o\cup_\phi D^4$.
The retraction $q$ extends to a map $f:X\to{Z}$.
Comparison of the exact sequences for these pairs shows that $f$ 
induces isomorphisms on homology and cohomology in degrees $\not=2$.
In particular, $H_4(X;\mathbb{Z}^w)\cong{H}_4(Z;\mathbb{Z}^w)$.
Let $[X]=f_*^{-1}[Z]$.
Then $f_*(f^*(\alpha)\cap[X])=\alpha\cap[Z]$
for all cohomology classes $\alpha$ on $Z$,
by the projection formula.
Therefore cap product with $[X]$ induces the Poincar\'e duality isomorphisms
for $Z$ in degrees other than 2.
As it induces an isomorphism 
$\overline{H^2(X;\mathbb{Z}[\pi])}\cong{H}_2(X;\mathbb{Z}[\pi])$, 
by the assumption on $\Lambda$, 
$X_\phi$ is a $PD_4$-complex with $\lambda_X\cong\Lambda$.
\end{proof}

\section{strongly minimal models with $\pi_2=0$}

A $PD_4$-complex $Z$ with $\pi_2(Z)=0$ is clearly strongly minimal.

\begin{lemma}
\label{Pi=0}
Let $X$ be a $PD_4$-complex with fundamental group $\pi$. Then 
\begin{enumerate}
\item$\Pi=0$ if and only if $X$ is strongly minimal and $E^2\mathbb{Z}=0$,
and then $E^3\mathbb{Z}=0$;

\item{if $\Pi=0$ and $\pi$ is infinite then the homotopy type of $X$ 
is determined by $\pi$, $w$ and $k_2(X)\in{H^4(\pi;E^1\mathbb{Z})}$.}
\end{enumerate}
\end{lemma}

\begin{proof}
Part (1) follows from part (1) of Lemma \ref{lambdaE}.
If $\Pi=0$ then $P_2(X)\simeq{K(\pi,1)}$ and $\pi_3(Z)\cong{E^1\mathbb{Z}}$, 
by Poincar\'e duality. 
Hence (2) follows from Lemma \ref{post3}.
\end{proof}
 
\begin{theorem}
Let $\pi$ be a finitely presentable group with no $2$-torsion and
such that $E^2\mathbb{Z}=E^3\mathbb{Z}=0$,
and let $w:\pi\to\mathbb{Z}^\times$ be a homomorphism.
Then two $PD_4$-complexes $X$ and $Y$ with fundamental group $\pi$,
$w_1(X)=c_X^*w$, $w_1(Y)=c_Y^*w$ and $\pi_2(X)$ and $\pi_2(Y)$ projective 
$\mathbb{Z}[\pi]$-modules are homotopy equivalent if and only if 
\begin{enumerate}
\item$c_{X*}[X]=\pm {g^*}c_{Y*}[Y]$ in $H_4(\pi;\mathbb{Z}^w)$,
for some $g\in{Aut(\pi)}$ with $wg=w$; 
and 
\item$\lambda_X\cong\lambda_Y$.
\end{enumerate}
\end{theorem}

\begin{proof}
The hypotheses imply that $X$ and $Y$ have strongly minimal models
$Z_X$ and $Z_Y$ with $\pi_2(Z_X)=\pi_2(Z_Y)=0$,
and hence $P_2(Z_X)\simeq P_2(Z_Y)\simeq K(\pi,1)$.
Moreover $H^3(\pi;\pi_2(X))=H^3(\pi;\pi_2(Y))=0$, since $E^3\mathbb{Z}=0$, 
and so the result follows by the argument of Theorem \ref{thm06}.
\end{proof}

In particular, $Z_X\simeq{Z_Y}$.
If $\pi$ also has one end then the minimal model is aspherical. 
See Theorem \ref{asph} below.

Connected sums of complexes with $\pi_2=0$ again have $\pi_2=0$,
and the fundamental groups of such connected sums usually have 
infinitely many ends.
(The sole nontrivial exception is $RP^4\#RP^4$.)
The arguments of \cite{Tu}  can be extended to this situation,
to show that if $\pi$ splits as a free product then 
$Z$ has a corresponding connected sum decomposition \cite{BBH}.
(In particular, if $\pi$ is torsion free then its free factors 
are one-ended or infinite cyclic,
and so the summands are either aspherical or copies of $S^1\times{S^3}$ or $S^1\tilde\times{S^3}$.)

In the next two sections we shall determine the groups $\pi$
with finitely many ends which are fundamental groups of strongly minimal $PD_4$-complexes $Z$ with $\pi_2(Z)=0$.
(Little is known about such complexes with $\pi$ indecomposable 
and having infinitely many ends.
It follows from the results of \cite{Cr} that  the centralizer 
of any element of finite order is finite or has two ends.)

\section{strongly minimal models with $\pi$ virtually free}

If $\pi$ is virtually free (in particular, if it is finite or two-ended) 
then $E^s\mathbb{Z}=0$ for all $s>1$,
and so a strongly minimal $PD_4$-complex $Z$ with fundamental group $\pi$
must have $\pi_2(Z)=0$,
by Lemma \ref{Pi=0}.
Thus if $\pi$ is finite $\widetilde{Z}\simeq{S^4}$,
and so $Z\simeq{S^4}$ or $\mathbb{RP}^4$.
(See Lemma 12.1 of \cite{Hi}.)
Every orientable $PD_n$-complex admits a degree-1 map to $S^n$.
It is well known that the (oriented) homotopy type of a 1-connected
$PD_4$-complex is determined by its intersection pairing and that every 
such pairing is realized by some 1-connected topological 4-manifold.
(See page 161 of \cite{FQ}).
Thus the only finite group we need to consider is $\pi=Z/2Z$.

\begin{theorem}
Let $X$ be a $PD_4$-complex with $\pi_1(X)=Z/2Z$ and let $w=w_1(X)$.
Then $\mathbb{RP}^4$ is a model for $X$ if and only if $w^4\not=0$.
\end{theorem}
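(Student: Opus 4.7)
The plan is to prove the two directions separately. The forward direction is a straightforward Kronecker-duality argument: if $f:X\to\mathbb{RP}^4$ is a $2$-connected degree-$1$ map, then by definition $f^*w_1(\mathbb{RP}^4)=w$ and $f_*[X]_2=[\mathbb{RP}^4]_2$ in $H_4(\mathbb{RP}^4;\mathbb{F}_2)$. Writing $u=w_1(\mathbb{RP}^4)$, naturality of the Kronecker pairing yields
\[
\langle w^4,[X]_2\rangle=\langle f^*(u^4),[X]_2\rangle=\langle u^4,f_*[X]_2\rangle=\langle u^4,[\mathbb{RP}^4]_2\rangle=1,
\]
so $w^4\neq0$.

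For the converse I construct a $2$-connected degree-$1$ map in two stages, starting from the classifying map $c_X:X\to K(\mathbb{Z}/2\mathbb{Z},1)=\mathbb{RP}^\infty$. Since $X\simeq X_o\cup_\phi e^4$ is homotopy equivalent to a $4$-dimensional complex and $\mathbb{RP}^4$ is the $4$-skeleton of $\mathbb{RP}^\infty$ in its standard CW-structure, cellular approximation produces $g:X\to\mathbb{RP}^4$ with $ig\simeq c_X$, where $i:\mathbb{RP}^4\hookrightarrow\mathbb{RP}^\infty$ denotes the inclusion. Then $g$ induces the same isomorphism on $\pi_1$ as $c_X$, and since $\pi_2(\mathbb{RP}^4)=0$ it is automatically $2$-connected. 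Applying the Kronecker calculation above with $g$ in place of $f$, the hypothesis $w^4\neq0$ forces the mod-$2$ degree of $g$ to equal $1$; equivalently, the twisted integer degree $d$ of $g$ is odd.

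The second stage corrects the degree from $d$ to $1$ via the pinch-fold construction
\[
g':X\xrightarrow{\mathrm{pinch}}X\vee S^4\xrightarrow{g\vee\phi}\mathbb{RP}^4\vee\mathbb{RP}^4\xrightarrow{\nabla}\mathbb{RP}^4,
\]
where the pinch collapses the boundary of a small $4$-disk in $X$ and $\nabla$ is the fold map. Any map $\phi:S^4\to\mathbb{RP}^4$ factors up to homotopy as $p\circ h$ with $h:S^4\to S^4$ and $p:S^4\to\mathbb{RP}^4$ the double cover, since $p$ generates $\pi_4(\mathbb{RP}^4)\cong\pi_4(S^4)=\mathbb{Z}$. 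Because $p_*[S^4]=2[\mathbb{RP}^4]$ in $H_4(\mathbb{RP}^4;\mathbb{Z}^w)$, the degrees realized by such $\phi$ are exactly the even integers. Choosing $\phi$ of degree $1-d$ (even, since $d$ is odd), the standard additivity of the pinch gives
\[
g'_*[X]=g_*[X]+\phi_*[S^4]=d[\mathbb{RP}^4]+(1-d)[\mathbb{RP}^4]=[\mathbb{RP}^4].
\]
The pinch and the $S^4$-summand contribute trivially to $\pi_1$, so $g'$ still induces an isomorphism on $\pi_1$, and hence is a $2$-connected degree-$1$ map, exhibiting $\mathbb{RP}^4$ as a model for $X$.

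The main obstacle is the parity matching in the final step: the maps $S^4\to\mathbb{RP}^4$ realize only even degrees, so one could only adjust an odd degree to $1$. Happily, the assumption $w^4\neq0$ both forces $d$ to be odd via the Kronecker computation and guarantees the pinch-fold correction terminates at degree $1$, so the construction always succeeds.
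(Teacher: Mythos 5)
Your proof is correct and follows essentially the same route as the paper: factor $c_X$ through the $4$-skeleton $\mathbb{RP}^4$ of $\mathbb{RP}^\infty$, observe that $w^4\not=0$ forces the resulting map to have odd degree, and then correct the degree by modifying the map on a disc. Your pinch--fold argument, together with the computation that maps $S^4\to\mathbb{RP}^4$ realize exactly the even degrees (since $\pi_4(\mathbb{RP}^4)$ is generated by the double cover, whose twisted degree is $2$), is precisely the content of the paper's citation of Olum, so you have simply made that final step explicit.
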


\begin{proof}
The condition is clearly necessary.
Conversely, we may assume that $X=X_o\cup e^4$ is obtained by attaching 
a single 4-cell to a 3-complex $X_o$, by Lemma 2.9 of \cite{Wa}.
The map $c_X:X\to\mathbb{RP}^\infty=K(Z/2Z,1)$ 
factors through a map $f:X\to\mathbb{RP}^4$,
and $w=f^*w_1(\mathbb{RP}^4)$, since $w\not=0$.
The degree of $f$ is well-defined up to sign, 
and is odd since $w^4\not=0$.
We may arrange that $f$ is a degree-1 map, 
after modifying $f$ on a disc, if necessary.
(See \cite{Ol}.)
\end{proof}

In particular, $\pi_2(X)$ is projective if and only if $w^4\not=0$.
Can this be seen directly?
The two $\mathbb{RP}^2$-bundles over $S^2$ provide contrasting examples.
If $X={S^2\times\mathbb{RP}^2}$ then $w^3=0$ and 
$\Pi\cong\mathbb{Z}\oplus\mathbb{Z}^w$,
which has no nontrivial projective $\mathbb{Z}[Z/2Z]$-module summand.
Thus ${S^2\times\mathbb{RP}^2}$ is  order minimal but not strongly minimal.
On the other hand, if $X$ is the nontrivial bundle space then
$w^4\not=0$ and $\Pi\cong\mathbb{Z}[Z/2Z]$.

Nonorientable topological 4-manifolds with fundamental group $Z/2Z$ 
are classified up to homeomorphism in \cite{HKT4},
and it is shown there that the homotopy types are determined
by the Euler characteristic, $w^4$, the $v_2$-type and an Arf invariant
(for $v_2$-type III).
The authors remark that their methods show that $\lambda_X$ together with a
quadratic enhancement $q:\Pi\to{Z/4Z}$ due to \cite{KKR}
is also a complete invariant for the homotopy type of such a manifold.

If $\pi=\pi_1(Z)$ has two ends 
and $\pi_2(Z)=0$ then $\widetilde{Z}\simeq S^3$.
Since $\pi$ has two ends it is an extension of 
$\mathbb{Z}$ or the infinite dihedral group $D_\infty={Z/2Z*Z/2Z}$ 
by a finite normal subgroup $F$.
Since $F$ acts freely on $\widetilde{Z}$ it has cohomological period 
dividing 4 and acts trivially on $\pi_3(Z)\cong{H}_3(Z;\mathbb{Z}[\pi])$,
while the action $u:\pi\to\{\pm1\}=Aut(\pi_3(Z))$ induces the usual action of
$\pi/F$ on $H^4(F;\mathbb{Z})$.
The action $u$ and the orientation character $w_1(Z)$
determine each other, and every such group $\pi$ and action $u$ 
is realized by some $PD_4$-complex $Z$ with $\pi_2(Z)=0$.
The homotopy type of $Z$ is determined by $\pi$, $u$ 
and the first nontrivial $k$-invariant in $H^4(\pi;\mathbb{Z}^u)$.
(See Chapter 11 of \cite{Hi}.)

We shall use Farrell cohomology to show that any $PD_4$-complex $X$ with 
$\pi_1(X)\cong\pi$ satisfying corresponding conditions has a strongly minimal 
model.
We refer to the final chaper of \cite{Br} for more information on Farrell
cohomology.

It is convenient to use the following notation.
If $R$ is a noetherian ring and $M$ is a finitely generated $R$-module
let $\Omega^1M=\mathrm{Ker}(\phi)$, where $\phi:R^n\to M$ is any epimorphism,
and define $\Omega^kM$ for $k>1$ by iteration, so that
$\Omega^{n+1}M=\Omega^1\Omega^nM$.
We shall say that two finitely generated $R$-modules $M_1$ and $M_2$
are projectively equivalent ($M_1\simeq{M_2}$) if they are isomorphic up to
direct sums with a finitely generated projective module.
Then these ``syzygy modules" $\Omega^kM$ are finitely generated, 
and are well-defined up to projective equivalence, by Schanuel's Lemma.

\begin{theorem}
\label{2end}
Let $X$ be a $PD_4$-complex such that $\pi=\pi_1(X)$ has two ends.
Then $X$ has a strongly minimal model if and only if 
$\pi$ and the action $u$ of $\pi$ on $H_3(X;\mathbb{Z}[\pi])\cong\mathbb{Z}$ 
are realized by some $PD_4$-complex $Z$ with $\pi_2(Z)=0$.
\end{theorem}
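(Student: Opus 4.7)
The plan uses the criteria assembled in Section 6. For $\pi$ two-ended the group is virtually infinite cyclic, so $H^s(\pi;\mathbb{Z}[\pi])=0$ for $s\neq 1$ and $H^1(\pi;\mathbb{Z}[\pi])\cong\mathbb{Z}$; in particular $E^2\mathbb{Z}=E^3\mathbb{Z}=0$. Consequently $\lambda_X$ is nonsingular by Lemma \ref{lambdaE}(4) and the evaluation sequence reduces to $H\cong\Pi^\dagger$, where $\Pi=\pi_2(X)$. By Corollary \ref{existcor} the existence of a strongly minimal model for $X$ is therefore equivalent to $\Pi$ being a finitely generated projective $\mathbb{Z}[\pi]$-module. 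If such a model $Z$ exists, Lemma \ref{Pi=0}(1) forces $\pi_2(Z)=0$, and $Z$ itself realizes the pair $(\pi,u)$. This gives the forward direction.

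For the converse, suppose some $PD_4$-complex $Z'$ with the same $(\pi,u)$ and $\pi_2(Z')=0$ is given. Then $\widetilde{Z'}\simeq S^3$ (by Hurewicz and Poincar\'e duality applied to $Z'$), so every finite subgroup $F\leq\pi$ acts freely on $S^3$ and in particular has cohomological period dividing $4$. Since $\pi$ has finite virtual cohomological dimension, Farrell cohomology $\hat H^*(\pi;-)$ is defined, and a finitely generated $\mathbb{Z}[\pi]$-module of finite projective dimension is projective iff its restriction to every finite subgroup is projective over the subgroup ring. The chain complex $C_*(\widetilde X;\mathbb{Z})$ is a finite complex of finitely generated projective $\mathbb{Z}[\pi]$-modules, and syzygy bookkeeping on its low-degree homology shows that $\Pi$ has finite projective dimension. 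To check that $\Pi|_F$ is projective over $\mathbb{Z}[F]$, I would exploit the $4$-periodicity of $\hat H^*(F;-)$ by comparing the $F$-equivariant chain complex of $\widetilde X$ with that of $\widetilde{Z'}\simeq S^3$; the latter provides a free $F$-resolution of $\mathbb{Z}$ of length $3$, and the resulting Tate-cohomology computation forces $\hat H^*(F;\Pi)=0$, hence $\Pi|_F$ is cohomologically trivial.

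For uniqueness, any strongly minimal model $Z$ has $\pi_2(Z)=0$ (as above), so $k_1(Z)\in H^3(\pi;0)=0$, and $\pi_3(Z)\cong H_3(\widetilde Z;\mathbb{Z})\cong\mathbb{Z}^u$ by Hurewicz and Poincar\'e duality. Lemma \ref{post3} and Lemma \ref{Pi=0}(3) then reduce the homotopy type of $Z$ to the triple $(\pi,w,k_3(Z))$ with $k_3(Z)\in H^4(\pi;\mathbb{Z}^u)$. A $2$-connected degree-$1$ map $f:X\to Z$ satisfies $f_*[X]=[Z]\in H_4(Z;\mathbb{Z}^w)$, and since $P_3(Z)$ is an Eilenberg-Mac\,Lane-type fibration over $K(\pi,1)$ classified by $k_3(Z)$, the class $k_3(Z)$ is recovered from the image of $f_*[X]$ in $H_4(P_3(Z);\mathbb{Z}^w)$, hence is determined by $X$. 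Alternatively, once $\Pi$ is known to be projective one can build $Z$ explicitly via Theorem \ref{exist} applied with $K=H^2(X;\mathbb{Z}[\pi])$, and the construction is canonical up to homotopy. The main obstacle is the projectivity of $\Pi|_F$ for each maximal finite subgroup $F$: this is the step where the period-$4$ hypothesis on $F$ (forced by the existence of $Z'$) and the Farrell-cohomology machinery are essential, and where the argument must be carried out with care.
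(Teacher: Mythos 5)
Your opening reduction is sound: for two--ended $\pi$ one has $E^2\mathbb{Z}=E^3\mathbb{Z}=0$, so by Lemma \ref{lambdaE} and Corollary \ref{existcor} (equivalently, Lemma \ref{Pi=0}(2)) the existence of a strongly minimal model is equivalent to $\Pi=\pi_2(X)$ being projective, and any such model has $\pi_2=0$. You have also correctly located where the hypothesis on $Z'$ must enter, namely through the period--$4$ Farrell cohomology of $\pi$. The gap is in the mechanism you propose for proving projectivity. The criterion you invoke --- that a finitely generated $\mathbb{Z}[\pi]$-module of finite projective dimension is projective if and only if its restriction to every finite subgroup is projective --- is false: over $\mathbb{Z}[\mathbb{Z}]$ the augmentation module $\mathbb{Z}$ has projective dimension $1$ and restricts projectively to the only finite subgroup, yet is not projective. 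Restriction to finite subgroups cannot see the infinite part of $\pi$, and it is precisely here that the paper uses Poincar\'e duality: the universal coefficient spectral sequence for $H^*(X;\mathbb{Z}[\pi])$, together with duality and the periodicity identity $\Omega^1(\mathbb{Z}^u)\simeq\Omega^1\mathbb{Z}$, forces $Ext^s_{\mathbb{Z}[\pi]}(\Pi,\mathbb{Z}[\pi])=0$ for all $s\geq1$, and only this vanishing combined with finiteness of the projective dimension yields projectivity. That duality input is entirely absent from your argument, and the Tate cohomology of finite subgroups cannot replace it.

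A second, related gap: the finite projective dimension of $\Pi$ is not mere ``syzygy bookkeeping'' on $C_*(\widetilde X)$. From $X$ alone one only gets $\Omega^5\mathbb{Z}\simeq\Omega^1(\mathbb{Z}^u)\oplus\Omega^2\Pi$; one needs the corresponding sequences for $Z'$ to identify $\Omega^1(\mathbb{Z}^u)\simeq\Omega^5\mathbb{Z}$, and then a cancellation argument using the finite generation of $Ext^q_{\mathbb{Z}[\pi]}(\Omega^5\mathbb{Z},N)\cong Ext^q_{\mathbb{Z}[\pi]}(\Omega^1\mathbb{Z},N)$ (period $4$ again) to conclude $Ext^r_{\mathbb{Z}[\pi]}(\Pi,-)=0$ for $r>3$, whence finite projective dimension by Theorem X.5.3 of \cite{Br}. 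Finally, your uniqueness argument is circular as written: you recover $k_3(Z)$ from the image of $f_*[X]$ in $H_4(P_3(Z);\mathbb{Z}^w)$, which presupposes $Z$; and the alternative claim that the construction of Theorem \ref{exist} is ``canonical up to homotopy'' is exactly what needs proof. The paper instead exhibits the relevant $k$-invariant as the Yoneda class in $H^4(\pi;\mathbb{Z}^u)$ of an exact sequence assembled directly from $C_*(\widetilde X)$ and $\Pi$, which is manifestly determined by $X$.
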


\begin{proof}
If $\pi_2(Z)=0$ then $\widetilde{Z}\simeq S^3$, by 
Poincar\'e duality and the Hurewicz and Whitehead Theorems,
and the conditions on $\pi$ are necessary, 
by Theorem 11.1 and Lemma 11.3 of \cite{Hi}.

Conversely, since $\pi$ is virtually infinite cyclic the conditions imply that
the Farrell cohomology of $\pi$ has period dividing 4 \cite{Fa77}.
We may assume that the chain complex $C_*$ for $\widetilde{X}$
is a complex of finitely generated $\mathbb{Z}[\pi]$-modules. 
Then the modules $B_2$, $Z_2$ $Z_3$ and $\Pi$ are finitely generated,
since $\mathbb{Z}[\pi]$ is noetherian.
The chain complex $C_*$
gives rise to four exact sequences:
\begin{equation*}
\begin{CD}
0\to{Z}_2\to{C}_2\to{C}_1\to{C}_0\to\mathbb{Z}\to0,
\end{CD}
\end{equation*}
\begin{equation*}
\begin{CD}
0\to{Z}_3\to{C}_3\to{B}_2\to0,
\end{CD}
\end{equation*}
\begin{equation*}
\begin{CD}
0\to{B}_2\to{Z}_2\to\Pi\to0
\end{CD}
\end{equation*}
and
\begin{equation*}
\begin{CD}
0\to{C}_4\to{Z}_3\to\mathbb{Z}^u\to0.
\end{CD}
\end{equation*} 
It is clear that $Z_2\simeq\Omega^3\mathbb{Z}$ and $Z_3\simeq\Omega^1B_2$, 
while $\Omega^1Z_3\simeq\Omega^1(\mathbb{Z}^u)$.
The standard construction of a resolution of the middle term of a short exact 
sequence from resolutions of its extremes, applied to the third sequence, 
gives a projective equivalence $\Omega^1Z_2\simeq\Omega^1B_2\oplus\Omega^1\Pi$.
The corresponding sequences for a strongly minimal complex with
the same group $\pi$ and action $u$ give an equivalence
$\Omega^1(\mathbb{Z}^u)\simeq\Omega^1(\Omega^4\mathbb{Z})$.
(This is in turn equivalent to $\Omega^1\mathbb{Z}$, by periodicity.)
Together these equivalences give
\[
\Omega^5\mathbb{Z}\simeq\Omega^2Z_2\simeq\Omega^2B_2\oplus\Omega^2\Pi
\simeq\Omega^1Z_3\oplus\Omega^2\Pi\simeq\Omega^5\mathbb{Z}\oplus\Omega^2\Pi.
\]
Hence ${Ext}^q_{\mathbb{Z}[\pi]}(\Omega^5\mathbb{Z},N)
\cong{Ext}^q_{\mathbb{Z}[\pi]}(\Omega^5\mathbb{Z},N)\oplus
{Ext}^q_{\mathbb{Z}[\pi]}(\Omega^2\Pi,N)$, for all $q>v.c.d.\pi=1$, 
and any $\mathbb{Z}[\pi]$-module $N$.
If $N$ is finitely generated so is 
${Ext}^q_{\mathbb{Z}[\pi]}(\Omega^1\mathbb{Z},N)$, and so
${Ext}^{q+2}_{\mathbb{Z}[\pi]}(\Pi,N)=
{Ext}^q_{\mathbb{Z}[\pi]}(\Omega^2\Pi,N)=0$, for all $q>1$.
Since $\Pi$ is finitely generated ${Ext}^r_{\mathbb{Z}[\pi]}(\Pi,-)$ 
commutes with direct limits and so is 0, for all $r>3$.
Therefore $\Pi$ has finite projective dimension, by
Theorem X.5.3 of \cite{Br}.
There is a Universal Coefficient Spectral Sequence  
$$E_2^{pq}=Ext_{\mathbb{Z}[\pi]}^q (H_p (X;\mathbb{Z}[\pi]),\mathbb{Z}[\pi])
\Rightarrow H^{p+q}(X;\mathbb{Z}[\pi]).$$
Here $E_2^{pq}=0$ unless $p=0$, 2 or 3,
and $E_2^{0q}=E_2^{3q}=0$ if $q>1$,
since $\pi$ is virtually infinite cyclic and 
$\Omega^1(\mathbb{Z}^u)\simeq\Omega^1\mathbb{Z}$.
It follows easily from this spectral sequence and Poincar\'e duality that 
$Ext^s_{\mathbb{Z}[\pi]}(\Pi,\mathbb{Z}[\pi])=0$ for all $s\geq1$.
Since $\Pi$ also has finite projective dimension it is projective.
Hence $X$ has a strongly minimal model, by Theorem \ref{exist}.
\end{proof}

Thus, for instance, an orientable $PD_4$-complex with fundamental group 
$D_\infty$ does not have a strongly minimal model.

We shall summarize here the results of \cite{Hi04a} on the case 
when $\pi\cong{F(r)}$, for some $r\geq1$.
All epimorphisms $w:F(r)\to\mathbb{Z}^\times$ are equivalent up to
composition with an automorphism of $F(r)$.
The ring $\mathbb{Z}[F(r)]$ is a coherent domain of global dimension 2,
for which all projectives are free.
There are just two homotopy types of $\chi$-minimal $PD_4$-complexes
$Z$ with $\pi_1(Z)\cong{F(r)}$,
namely $\#^r(S^1\times{S^3})$ (if $w=0$) and 
$(S^1\tilde\times{S^3})\#(\#^{r-1}(S^1\times{S^3}))$ (if $w\not=0$).
(These are strongly minimal, and so the notions of minimality coincide 
in this case.)
If $X$ is any $PD_4$-complex with $\pi_1(X)\cong{F(r)}$ 
then $\pi_2(X)$ is a free $\mathbb{Z}[F(r)]$-module,
and there is a degree-1 map from $X$ to the minimal model 
with compatible $w$.
Every $w$-hermitean pairing on $\mathbb{Z}[F(r)]^s$ is realizable
by some such $PD_4$-complex, and two such complexes $X$ and $Y$
realizing $(F(r),w)$ are homotopy equivalent if and only if $\lambda_X$ and $\lambda_Y$ are isometric.

The key observation is that if $X$ is a $PD_4$-complex 
with $\pi_1(X)\cong{F(r)}$ then its 3-skeleton is standard:
if $\beta_2(X)=\beta$ then $X\simeq{X_\psi}=X_o\cup_\psi{e^4}$,
where $X_o=\vee^r(S^1\vee{S^3})\vee(\vee^\beta{S^2})$ and $\psi\in\pi_3(X_o)$.
(This is an easy homological argument, relying on Schanuel's Lemma and
the theorems of Hurewicz and Whitehead.)
The main results then follow on exploring how the group $E(X_o)$ acts
on the attaching map $\psi$.
This group is ``large" and its action is easily analyzed.
Most of these results (excepting for the determination of the minimal models)
can also be proven by adapting the arguments of this paper.

Finitely generated virtually free groups provide a potentially 
broader class of examples.
These groups are fundamental groups of finite graphs of finite groups.
The arguments of \cite{Cr} may be adapted to show that if $Z$ 
is a strongly minimal $PD_4$-complex such that $\pi=\pi_1(Z)$ is virtually free 
(so $\pi_2(Z)=0$)
and if $\pi$ has no dihedral subgroup of order $>2$ then it is a free product of groups with two ends \cite{BBH}.
However, not much is known about criteria for 2-connected degree-1 maps
to a specific minimal model.

\section{Strongly minimal models with $\pi$ one-ended}

We begin this section with a general result on the case when $\pi$ has one end.

\begin{lemma}
\label{nolf}
Let $G$ be a group.
If $T$ is a locally-finite normal subgroup of $G$ then $T$ 
acts trivially on $H^j(G;\mathbb{Z}[G])$, for all $j\geq0$.
\end{lemma}

\begin{proof} 
If $T$ is finite then  $H^j(G;\mathbb{Z}[G])\cong{H^j}(G/T;\mathbb{Z}[G/T]))$,
for all $j$, and the result is clear.
Thus we may assume that $T$ and $G$ are infinite.
Hence $H^0(G;\mathbb{Z}[G])=0$, and $T$ acts trivially.
We may write $T=\cup_{n\geq1}{T_n}$ as a strictly increasing union of finite subgroups. 
Then there are short exact sequences \cite{Je}
\[
0\to{\varprojlim}^1{H^{s-1}(T_n;\mathbb{Z}[\pi])}\to{H^s(T;\mathbb{Z}[\pi])}\to
\varprojlim{H^s(T_n;\mathbb{Z}[\pi])}\to0.
\]
Hence $H^s(T;\mathbb{Z}[\pi])=0$ if $s\not=1$ and 
$H^1(T;\mathbb{Z}[\pi])={\varprojlim}^1H^0(T_n;\mathbb{Z}[\pi])$,
and so the LHS spectral sequence collapses to give
$H^j(G;\mathbb{Z}[G])\cong{H^{j-1}}(G/T;H^1(T;\mathbb{Z}[G]))$,
for all $j\geq1$.
Let $g\in{T}$. 
We may assume that $g\in{T_n}$ for all $n$, and so $g$ acts trivially on $H^0(T_n;\mathbb{Z}{G})$,
for all $j$ and $n$.
But then $g$ acts trivially on ${\varprojlim}^1H^0(T_n;\mathbb{Z}[\pi])$,
by the functoriality of the construction.
Hence every element of $T$ acts trivially on $H^{j-1}(G/T;H^1(T;\mathbb{Z}[G]))$,
for all $j\geq1$.
\end{proof}

\begin{theorem}
\label{nofnsgp}
Let $X$ be an orientable, strongly minimal $PD_4$-complex .
If $\pi=\pi_1(X)$ has one end 
then $\pi$ has no non-trivial locally-finite normal subgroup.
\end{theorem}

\begin{proof}
Suppose that $\pi$ has a nontrivial locally-finite normal subgroup $T$.
Since $\pi$ has one end,  
$H_s(X;\mathbb{Z}[\pi])=0$ for $s\not=0$ or 2.
Since $X$ is strongly minimal,
$\Pi=H_2(X;\mathbb{Z}[\pi])\cong\overline{H^2(\pi;\mathbb{Z}[\pi])}$.
Hence $T$ acts trivially on $\Pi$, 
since it acts trivially on $H^2(\pi;\mathbb{Z}[\pi])$, 
by Lemma \ref{nolf}, and $X$ is orientable.

Let $g\in{T}$ have prime order $p$, and let $C=\langle{g}\rangle\cong{Z/pZ}$.
Then $C$ acts freely on $\widetilde{X}$,
which has homology only in degrees 0 and 2.
On considering the homology spectral sequence for the classifying map
$c_{\widetilde{X}/C}:\widetilde{X}/C\to{K(C,1)}$, 
we see that $H_{i+3}(C;\mathbb{Z})\cong H_i(C;\Pi)$, for all $i\geq2$.
(See Lemma 2.10 of \cite{Hi}.)
Since $C$ has cohomological period 2 and acts trivially on $\Pi$, 
there is an exact sequence 
\[
0\to{Z/pZ}\to\Pi\to\Pi\to0.
\]
On the other hand, since $\pi$ is finitely presentable,
$\Pi\cong{H^2(\pi;\mathbb{Z}[\pi])}$ is torsion-free,
by Proposition 13.7.1 of \cite{Ge}.
Hence $T$ has no such element $g$ and so $\pi$ has no such finite normal subgroup.
\end{proof}

As an immediate consequence, if $X$ is strongly minimal,
but not orientable, 
and $\pi$ has one end, then either $\pi$ has no nontrivial 
locally-finite normal subgroup or $\pi\cong\pi^+\times{Z/2Z^-}$, 
and $\pi^+$ has no nontrivial
locally-finite normal subgroup.

If $\pi$ has one end and $E^2\mathbb{Z}=0$ then 
any strongly minimal $PD_4$-complex with fundamental group $\pi$ is aspherical.
Hence $\pi$ is a $PD_4$-group and $K(\pi,1)$
is the unique strongly minimal model.
The next theorem gives several equivalent conditions for 
a $PD_4$-complex with such a group to have a strongly minimal model.

\begin{theorem}
\label{asph}
Let $X$ be a $PD_4$-complex with fundamental group $\pi$ 
such that $\pi$ has one end and $E^2\mathbb{Z}=0$.
Then the following are equivalent:
\begin{enumerate}
\item $X$ has a strongly minimal model;

\item $\pi$ is a $PD_4$-group and $\Pi=\pi_2(X)$ is projective:

\item $\pi$ is a $PD_4$-group,
$w_1(X)=c_X^*w_1(\pi)$ and $c_X$ is a degree-$1$ map;

\item $\pi$ is a $PD_4$-group,
$w_1(X)=c_X^*w_1(\pi)$ and $k_1(X)=0$.

\end{enumerate}
\end{theorem}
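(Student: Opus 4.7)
The plan is to establish $(1)\Leftrightarrow(2)\Leftrightarrow(3)$ via four implications; the main step will be $(3)\Rightarrow(2)$, where the vanishing of $k_1(X)$ will annihilate the critical differential in a Serre spectral sequence.

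For $(2)\Rightarrow(1)$ and $(2)\Rightarrow(3)$: since $\pi$ is a $PD_4$-group, $K(\pi,1)$ is a $PD_4$-complex with $\pi_2=0$, hence strongly minimal; $c_X$ is automatically 2-connected (as $\pi_2K(\pi,1)=0$) and degree-1 by hypothesis, so $K(\pi,1)$ is a strongly minimal model for $X$, proving (1). Applying Wall's Lemma 2.2 to this 2-connected degree-1 map yields that $K_2(c_X)=\Pi$ is finitely generated projective; since $E^3\mathbb{Z}=0$ (as $\pi$ is a $PD_4$-group), $H^3(\pi;\Pi)=0$, forcing $k_1(X)=0$ and proving (3).

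For $(1)\Rightarrow(2)$: given a strongly minimal model $Z$, Lemma \ref{Pi=0}(1) combined with $E^2\mathbb{Z}=0$ forces $\pi_2(Z)=0$, so $c_Z:Z\to K(\pi,1)$ is 3-connected and $H^j(Z;\mathbb{Z}[\pi])\cong H^j(\pi;\mathbb{Z}[\pi])$ for $j\leq 2$. Poincar\'e duality for $Z$ then gives $H_q(\widetilde Z;\mathbb{Z})\cong\overline{H^{4-q}(Z;\mathbb{Z}[\pi])}$, which for $q=3,4$ reduces to $\overline{E^1\mathbb{Z}}$ and $\overline{E^0\mathbb{Z}}$, both vanishing since $\pi$ is infinite with one end. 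Hurewicz gives $\pi_3(Z)=\pi_4(Z)=0$, so $\widetilde Z$ is 4-connected; since $Z$ is homotopy equivalent to a 4-dimensional CW complex (being a $PD_4$-complex), so is $\widetilde Z$, which is therefore acyclic and hence contractible. Thus $Z\simeq K(\pi,1)$, $\pi$ is a $PD_4$-group, and the given 2-connected degree-1 map $X\to Z$ is $c_X$ up to a self-equivalence of $Z$, yielding the stated conclusions.

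For $(3)\Rightarrow(2)$, I will compute $\deg(c_X)$ from the Serre spectral sequence of $c_X:X\to K(\pi,1)$ (replaced by a fibration with fiber $\widetilde X$):
\[E^2_{p,q}=H_p(\pi;H_q(\widetilde X;\mathbb{Z})\otimes\mathbb{Z}^w)\Longrightarrow H_{p+q}(X;\mathbb{Z}^w).\]
As in the previous paragraph, Poincar\'e duality and the hypotheses on $\pi$ give $H_q(\widetilde X)=0$ for $q=1,3,4$ and $H_2(\widetilde X)=\Pi$; so on the line $p+q=4$ only $E^2_{4,0}=H_4(\pi;\mathbb{Z}^w)\cong\mathbb{Z}$ (as $\pi$ is a $PD_4$-group) and $E^2_{2,2}=H_2(\pi;\Pi\otimes\mathbb{Z}^w)$ can be nonzero. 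The only possibly nonzero differential out of $E^3_{4,0}$ is $d_3:E^3_{4,0}\to E^3_{1,2}$; the 3-connectedness of $f_X:X\to P_2(X)$ identifies this with the corresponding $d_3$ in the Serre sequence of the Postnikov fibration $K(\Pi,2)\to P_2(X)\to K(\pi,1)$, which is cap product with $k_1(X)$. So $k_1(X)=0$ gives $d_3=0$, hence $E^\infty_{4,0}=\mathbb{Z}$. In the induced filtration on $H_4(X;\mathbb{Z}^w)=\mathbb{Z}$, the quotient $F^4/F^3=E^\infty_{4,0}=\mathbb{Z}$ forces $F^3=0$ (as $\mathbb{Z}$ is indecomposable), so $c_{X*}[X]$ generates $H_4(\pi;\mathbb{Z}^w)$ and $\deg(c_X)=\pm 1$. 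The main obstacle is the identification of $d_3$ with cap product by $k_1(X)$; this rests on the standard transgression formula for the Serre spectral sequence of a Postnikov fibration, together with the 3-connectedness of $f_X$ which ensures the spectral sequences of $X\to K(\pi,1)$ and $P_2(X)\to K(\pi,1)$ agree on the relevant differential.
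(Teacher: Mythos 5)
Your proposal is correct, and three of its four implications run essentially as in the paper: for $(1)\Rightarrow(2)$ you use Lemma \ref{Pi=0} and $E^2\mathbb{Z}=0$ to get $\pi_2(Z)=0$, then Poincar\'e duality plus one-endedness to kill $H_3(\widetilde{Z};\mathbb{Z})$ and $H_4(\widetilde{Z};\mathbb{Z})$ and conclude asphericity; for $(2)\Rightarrow(3)$ you use Wall's projectivity of the surgery kernel and $E^3\mathbb{Z}=0$ to kill $H^3(\pi;\pi_2(X))$ — exactly the paper's reasoning. The genuine divergence is in $(3)\Rightarrow(2)$. The paper writes $K=K_o\cup e^4$ and $X=X_o\cup e^4$, uses the section $s$ of $c_{P_2(X)}$ (available since $k_1(X)=0$) together with cellular approximation to exhibit $K_o$ as a retract of $X_o$, and then compares the long exact sequences of the pairs $(X,X_o)$ and $(K,K_o)$ with $\mathbb{Z}[\pi]$- and $\mathbb{Z}^w$-coefficients to see that $c_X$ has degree $1$. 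You instead run the homology Serre spectral sequence of $c_X$ with $\mathbb{Z}^w$-coefficients, note that $H_q(\widetilde{X};\mathbb{Z})=0$ for $q=1,3,4$ by duality and one-endedness, so that the only differential that could obstruct surjectivity of the edge map $c_{X*}:H_4(X;\mathbb{Z}^w)\to H_4(\pi;\mathbb{Z}^w)$ is $d_3:E^3_{4,0}\to E^3_{1,2}$, and you identify this, via the $3$-connected map $f_{X,2}$ and naturality, with cap product by $k_1(X)$. Both arguments are sound; yours makes it transparent that the vanishing of $k_1(X)$ removes precisely the one obstruction to $\deg(c_X)=\pm1$, at the cost of invoking the standard (but not entirely trivial) identification of the first nonvanishing differential of a Postnikov fibration with the $k$-invariant. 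A small simplification available within your framework: once $k_1(X)=0$ gives a section of $c_{P_2(X)}$, the induced splitting of $c_{P_2(X)*}$ already forces every differential out of $E^r_{4,0}$ to vanish for the Postnikov fibration, and naturality (using that $f_{X,2}$ induces an isomorphism on the relevant $E^2$-terms in rows $q\leq2$) transfers this to the spectral sequence for $X$; the explicit cap-product description of $d_3$ is then not needed.
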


\begin{proof}
The equivalence $(1)\Leftrightarrow(2)$ follows from Corollary \ref{existcor}.

If $Z$ is strongly minimal and $E^1\mathbb{Z}=E^2\mathbb{Z}=0$
then $\pi_2(Z)=0$ and $\pi_3(Z)=E^1\mathbb{Z}=0$.
Hence $Z$ is aspherical, 
so $\pi$ is a $PD_4$-group and $Z\simeq{K}= K(\pi,1)$.
Any 2-connected map $f:X\to{K}$ is homotopic to $c_X$ 
(up to composition with a self homotopy equivalence of $K$).
Thus $w_1(X)=c_X^*w_1(\pi)$ and $c_X$ is a degree-1 map.
Conversely, if (3) holds then $K=K(\pi,1)$ is the unique strongly minimal
$PD_4$-complex with fundamental group $\pi$, and $c_X$
is a  2-connected degree-1 map.
Thus $(3)\Leftrightarrow(1)$. 

If (2) or (3) holds then $\Pi=\mathrm{Ker}(\pi_2(c_X))$ is projective,
Since $\pi$ is a $PD_4$-group, 
$H^3(\pi;M)=0$ for any projective module $M$,
and so $k_1(X)=0$.
Conversely, if (4) holds the map $c_P:P=P_2(X)\to{K}$ has a section $s$, 
since $k_1(X)=0$.
We may assume that $K=K_o\cup{e^4}$ and $X=X_o\cup{e^4}$,
where $K_o$ and $X_o$ are 3-complexes.
The restriction $s|_{K_o}$ factors through $X_o$, by cellular approximation,
since $P=X_o\cup\{ cells~of~dim\geq4\}$.
Thus $K_o$ is a retract of $X_o$.
The map $c_X$ induces a commuting diagram of homomorphisms 
between the long exact sequences of the pairs $(X,X_o)$ and $(K,K_o)$,
with coefficients $\mathbb{Z}[\pi]$.
Hence the induced map from $H_4(X,X_0;\mathbb{Z}[\pi])$ to
$H_4(K,K_0;\mathbb{Z}[\pi])$ is an isomorphism.
This homomorphism is one side of a commuting square whose 
opposite side is the corresponding map with coefficients $\mathbb{Z}^w$.
The other sides are epimorphisms, induced by the $w$-twisted augmentation.
It now follows easily that $c_X$ has degree 1.
Thus $(3)\Leftrightarrow(4)$.
\end{proof}

If $\pi$ has one end and $\Pi$ is projective then $c.d.\pi=4$ and
$H^4(\pi;\mathbb{Z}[\pi])\cong\mathbb{Z}$, by part (6) of Lemma \ref{lambdaE}.
Must $\pi$ be a $PD_4$-group?
This is so if also $E^3\mathbb{Z}=0$, 
for then $X$ has a strongly minimal model, by Lemma \ref{lambdaE}
and Theorem \ref{exist}, which must be aspherical.
If $X$ is strongly minimal and $\pi$ is virtually an $r$-dimensional duality group
then $r=1,2$ or 4, and in the latter case $\pi$ is a $PD_4$-group.

The next result follows immediately from Corollary 19 and Theorem \ref{asph}. 

\begin{cor}
Let $X$ and $Y$ be $PD_4$-complexes with fundamental group $\pi$ a $PD_4$-group,
and such that $\pi_2(X)$ and $\pi_2(Y)$ are projective $\mathbb{Z}[\pi]$-modules, 
$w_1(X)=c_X^*w$ and $w_1(Y)=c_Y^*w$, where  $w=w_1(\pi)$.
Then $X$ and $Y$ are homotopy equivalent if and only if $\lambda_X\cong\lambda_Y$. 
\qed
\end{cor}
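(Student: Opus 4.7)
The plan is to derive this directly from the previous Corollary together with Theorem~\ref{asph}, by verifying that the hypotheses of that Corollary hold once we assume $\pi$ is a $PD_4$-group. First I would check that such a $\pi$ is finitely presentable, which is part of the definition of a $PD_n$-group, and that $\pi$ has no $2$-torsion: since $\mathrm{c.d.}\pi=4$ is finite, every subgroup of $\pi$ has finite cohomological dimension, but nontrivial finite groups have infinite $\mathbb{Z}$-cohomological dimension, so $\pi$ must be torsion-free. Second, as $\pi$ is a $4$-dimensional duality group, $H^i(\pi;\mathbb{Z}[\pi])=0$ for all $i<4$, so $E^2\mathbb{Z}=E^3\mathbb{Z}=0$.

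With these facts in hand, the previous Corollary applies to $X$ and $Y$ with $w=w_1(\pi)$ and yields precisely the biconditional asserted here: $X\simeq Y$ if and only if $c_{X*}[X]=\pm g^*c_{Y*}[Y]$ in $H_4(\pi;\mathbb{Z}^w)$ for some $g\in\mathrm{Aut}(\pi)$ with $wg=w$, together with $\lambda_X\cong\lambda_Y$. Note that the projectivity assumptions on $\pi_2(X)$ and $\pi_2(Y)$ and the orientation identifications $w_1(X)=c_X^*w$, $w_1(Y)=c_Y^*w$ match the hypotheses of the previous Corollary verbatim upon taking $w=w_1(\pi)$.

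The role of Theorem~\ref{asph} in the deduction is to tie the picture together when $\pi$ is specifically a $PD_4$-group: via the equivalence (1)$\Leftrightarrow$(2) of that Theorem, it guarantees that $X$ and $Y$ each possess a strongly minimal model, and identifies that common model as $K(\pi,1)$; this underpins the invocation of Theorem~\ref{thm06} inside the proof of the previous Corollary. There is essentially no obstacle, as the deduction is pure hypothesis-verification; the only point that requires any care is ensuring that the orientation character $w$ in the previous Corollary is taken to be $w_1(\pi)$, so that the compatibility conditions $w_1(X)=c_X^*w=c_X^*w_1(\pi)$ and $w_1(Y)=c_Y^*w_1(\pi)$ coincide with the hypotheses we have assumed.
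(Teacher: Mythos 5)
Your proposal is correct and matches the paper's own (one-line) derivation: the paper states that this corollary "follows immediately from Corollary 16 and Theorem \ref{asph}," and your verification that a $PD_4$-group is torsion-free (hence has no $2$-torsion) and satisfies $E^2\mathbb{Z}=E^3\mathbb{Z}=0$ as a $4$-dimensional duality group is exactly the hypothesis-checking that makes that deduction go through.
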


This corollary and the equivalence of (3) and (4) in the Theorem
are from \cite{CH}.
(It is assumed there that $X$ and $\pi$ are orientable.)
Theorems \ref{asph} and \ref{thm06} give an alternative proof 
of the main result of \cite{CH},
namely that a $PD_4$-complex $X$ with fundamental group $\pi$ a $PD_4$-group
and $w_1(X)=w_1(\pi)$ is homotopy equivalent to $M\# K(\pi,1)$ 
with $M$ $1$-connected if and only if $k_1(X)=0$
and $\lambda_X$ is extended from a nonsingular pairing over $\mathbb{Z}$.

\section{semidirect products and mapping tori}

In this section we shall determine which semidirect products $\nu\rtimes_\alpha\mathbb{Z}$ 
with $\nu$ finitely presentable are fundamental groups of strongly minimal $PD_4$-complexes.

\begin{theorem}
\label{sdp}
Let $\nu$ be a finitely presentable group and let $X$ be a $PD_4$-complex
with fundamental group $\pi\cong\nu\rtimes_\alpha\mathbb{Z}$,
for some automorphism $\alpha$ of $\nu$.
Then the following are equivalent:
\begin{enumerate}
\item $X$ is the mapping torus of a self homotopy equivalence
of a $PD_3$-complex $N$ with fundamental group $\nu$;

\item $X$ is strongly minimal;

\item $\chi(X)=0$.
\end{enumerate}
In general, $X$ has a strongly minimal model if and only if $\Pi^\dagger$ is projective.
\end{theorem}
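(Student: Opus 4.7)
The plan is to prove the cycle (1) $\Rightarrow$ (2) $\Rightarrow$ (3) $\Rightarrow$ (1), handling (1) $\Rightarrow$ (3) directly as a sanity check: for $X \simeq M(f)$ the multiplicativity of Euler characteristic in the fibration $N \to X \to S^1$ gives $\chi(X) = \chi(N)\chi(S^1) = 0$ immediately.

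For (1) $\Rightarrow$ (2), I would work with the Wang-type long exact sequence for the fibration $N \to X \to S^1$ with twisted coefficients $\mathbb{Z}[\pi]$, regarded as a $\mathbb{Z}[\nu]$-bimodule via the factorisation $\mathbb{Z}[\pi] = \mathbb{Z}[\nu][t,t^{-1}]$:
\begin{equation*}
\cdots \to H^{q-1}(N;\mathbb{Z}[\pi]) \xrightarrow{1-f^*} H^{q-1}(N;\mathbb{Z}[\pi]) \to H^q(X;\mathbb{Z}[\pi]) \to H^q(N;\mathbb{Z}[\pi]) \to \cdots .
\end{equation*}
Since $N$ is a $PD_3$-complex, $H^q(N;\mathbb{Z}[\pi]) = 0$ for $q \geq 4$. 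The equivariant cup-product pairing is compatible with the filtration coming from this sequence, and a short diagram chase shows that the image of $H^2(X;\mathbb{Z}[\pi]) \otimes H^2(X;\mathbb{Z}[\pi])$ in $H^4(X;\mathbb{Z}[\pi])$ is assembled from pairings into $H^*(N;\mathbb{Z}[\pi])$ in degrees $\geq 4$, hence vanishes. Combined with Poincar\'e duality this gives $\lambda_X = 0$.

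The main step is (3) $\Rightarrow$ (1). The projection $\pi \to \mathbb{Z}$ is represented by a map $p : X \to S^1$ whose homotopy fibre (equivalently, the infinite cyclic cover) $X'$ satisfies $\pi_1(X') = \nu$, and the finite presentability of $\nu$ allows us to take $X'$ with finite $2$-skeleton. The crux is to show that $\chi(X) = 0$ forces $X'$ to be a finitely dominated $PD_3$-complex. Viewing the equivariant chain complex $C_* = C_*(\widetilde{X};\mathbb{Z}[\pi])$ as a finite projective complex of $\mathbb{Z}[\nu]$-modules (using that $\mathbb{Z}[\pi]$ is free over $\mathbb{Z}[\nu]$ of countable rank), the cover $X'$ has chain complex fitting into a short exact sequence
\begin{equation*}
0 \to C_* \xrightarrow{1-t_*} C_* \to C_*(\widetilde{X'};\mathbb{Z}[\nu]) \to 0,
\end{equation*}
and the hypothesis $\chi(X) = 0$ (together with Poincar\'e duality for $X$) translates into vanishing of the relevant Wall finiteness obstruction, so $C_*(\widetilde{X'};\mathbb{Z}[\nu])$ is equivalent to a finite projective complex. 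The Poincar\'e duality structure of $X$ then descends to a $PD_3$-structure on $X'$, and the deck transformation $t$ provides a self homotopy equivalence realising $\alpha$, exhibiting $X \simeq M(t)$. To close the cycle with (2) $\Rightarrow$ (3), I would use that strong minimality gives $H^2(X;\mathbb{Z}[\pi]) = H^2(\pi;\mathbb{Z}[\pi])$ and apply an $L^2$-Betti number computation: for infinite $\pi$, $\chi(X) = \beta_2^{(2)}(X) - 2\beta_1^{(2)}(X)$, and strong minimality with $\pi = \nu \rtimes \mathbb{Z}$ forces both terms to vanish.

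I expect the finite domination argument in (3) $\Rightarrow$ (1) to be the principal technical obstacle; it is essentially the content of Hillman's earlier work on 4-dimensional mapping tori (e.g.\ Theorem 4.5 of \emph{Four-Manifolds, Geometries and Knots}) and requires careful control of the Wall finiteness obstruction for the infinite cyclic cover. Once $X'$ is known to be a finite $PD_3$-complex, the mapping torus description is formal.
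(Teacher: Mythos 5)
Your overall architecture (the cycle of implications, with (3)$\Rightarrow$(1) delegated to the mapping-torus theorem from Chapter 4 of \cite{Hi}) matches the paper's, but the two implications you actually argue both have genuine gaps, and both trace back to the same missing idea. The engine of the paper's proof is that, since $\nu$ is finitely presentable and hence accessible, $H^1(\nu;\mathbb{Z}[\nu])$ is finitely generated as a $\mathbb{Z}[\nu]$-module, and the LHS spectral sequence identifies $H^2(\pi;\mathbb{Z}[\pi])|_\nu$ with $H^1(\nu;\mathbb{Z}[\nu])$. For (1)$\Rightarrow$(2) this gives that $\pi_2(X)|_\nu=\pi_2(N)\cong\overline{H^1(\nu;\mathbb{Z}[\nu])}$ is finitely generated over $\mathbb{Z}[\nu]$, whence $\pi_2(X)^\dagger=Hom_{\mathbb{Z}[\pi]}(\pi_2(X),\mathbb{Z}[\pi])=0$ and $\lambda_X=0$, because the adjoint of $\lambda_X$ factors through this dual. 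Your cup-product substitute does not work as stated: the equivariant pairing takes values in $\mathbb{Z}[\pi]\cong H^4(X;\mathbb{Z}[\pi]\otimes\mathbb{Z}[\pi])$ (diagonal action), not in $H^4(X;\mathbb{Z}[\pi])\cong\mathbb{Z}$, so killing the latter would only kill the augmentation of $\lambda_X$; moreover the claim that the product is ``assembled from pairings into $H^*(N;\mathbb{Z}[\pi])$ in degrees $\geq4$'' ignores the Wang connecting homomorphism from $H^3(N;\mathbb{Z}[\pi])\cong\mathbb{Z}[t,t^{-1}]\neq0$, through which essentially all of the top cohomology of $X$ factors, so restricting products to the fibre yields no information.

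For (2)$\Rightarrow$(3) your $L^2$ sketch simply asserts the hard point, namely that strong minimality forces $\beta_2^{(2)}(X)=0$. To prove it you would again need that $\Pi\cong\overline{H^2(\pi;\mathbb{Z}[\pi])}$ is finitely generated over $\mathbb{Z}[\nu]$ (so that $\ell^2\pi\otimes_{\mathbb{Z}[\pi]}\Pi$ has von Neumann dimension zero), together with L\"uck's mapping-torus vanishing theorem for $\beta_2^{(2)}(\pi)$ and a universal-coefficient argument to combine them; none of this appears in your proposal. The paper avoids $L^2$ machinery entirely: finite generation of $\Pi|_\nu$ gives $\beta_q(X_\nu;\mathbb{F}_2)<\infty$ for $q\leq2$, Poincar\'e duality gives $H_3(X_\nu;\mathbb{F}_2)\cong\mathbb{F}_2$, and then $\chi(X)=0$ follows from the Wang sequence over $\mathbb{F}_2$. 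I recommend you isolate the accessibility/finite-generation lemma first; with it in hand, both implications become short.
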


\begin{proof}
Let $X_\nu$ be the covering space of $X$ corresponding to $\nu$. 
Then $X_\nu$ is the homotopy fibre of a map from $X$ to $S^1$
which corresponds to the projection of $\pi$ onto $\mathbb{Z}$,
and $H_q(X_\nu;k)=0$ for $q>3$ and all coefficients $k$.
The LHS spectral sequence gives an isomorphism
$H^2(\pi;\mathbb{Z}[\pi])|_\nu\cong{H^1(\nu;\mathbb{Z}[\nu])}$
of right $\mathbb{Z}[\nu]$-modules.
Since $\nu$ is finitely presentable it is accessible, 
and hence $H^1(\nu;\mathbb{Z}[\nu])$ is finitely generated as a right 
$\mathbb{Z}[\nu]$-module.
(See Theorems VI.6.3 and IV.7.5 of \cite{DD}.)

Suppose first that $X$ is the mapping torus of 
a self homotopy equivalence of a $PD_3$-complex $N$.
Since $\pi_2(X)|_\nu=\pi_2(N)\cong\overline{H^1(\nu;\mathbb{Z}[\nu])}$
is finitely generated as a left $\mathbb{Z}[\nu]$-module,
$Hom_{\mathbb{Z}[\pi]}(\pi_2(X),\mathbb{Z}[\pi])=0$, 
and so $X$ is strongly minimal.

If $X$ is strongly minimal then
$\pi_2(X)\cong\overline{H^2(X;\mathbb{Z}[\pi])}=
\overline{H^2(\pi;\mathbb{Z}[\pi])}$.
Since $\beta_q(\nu;\mathbb{F}_2)<\infty$ for $q\leq2$ 
and $\pi_2(X_\nu)=\pi_2(X)|_\nu$ is finitely generated 
as a left $\mathbb{Z}[\nu]$-module
$\beta_q(X_{\nu};\mathbb{F}_2)$ is finite for $q\leq2$.
Poincar\'e duality in $X$ gives an isomorphism
$H_3(X_\nu;\mathbb{F}_2)\cong H^1(X;\mathbb{F}_2[\pi/\nu])=\mathbb{F}_2$.
Hence $\beta_q(X_{\nu};\mathbb{F}_2)$ is finite for all $q$,
and so $\chi(X)=0$, 
by a Wang sequence argument applied to the fibration
$X_\nu\to{X}\to{S^1}$.

If $\chi(X)=0$ then $X$ is a mapping torus of a self homotopy equivalence
of a $PD_3$-complex $N$ with $\pi_1(N)=\nu$. 
(See Chapter 4 of \cite{Hi}.)

The indecomposable factors $G_i$ of $\nu=*G_i$ are either $PD_3$-groups 
or virtually free \cite{Cr}, and in either case $H^2(G_i;\mathbb{Z}[G_i])=0$.
Therefore $H^2(\nu;\mathbb{Z}[\nu])=0$ and so $E^3\mathbb{Z}=0$.
The final assertion now follows from the evaluation sequence,
Lemma \ref{lambdaE} and Theorem \ref{exist}.
\end{proof}

The condition that $\nu$ be the fundamental group of a $PD_3$-complex
is quite restrictive.
Mapping tori of self homotopy equivalences of $PD_3$-complexes 
are always strongly minimal,
but other $PD_4$-complexes with such groups 
may have no strongly minimal model.
(See \S5 above for an example with $\pi=\mathbb{Z}^4$.)

If $\nu$ is finite then $\pi$ has two ends, 
and if $\nu$ has one end then $\pi$ is a $PD_4$-group.
If $\nu$ is torsion free and has two ends it is $\mathbb{Z}$,
and so $\pi\cong\mathbb{Z}^2$ or $\mathbb{Z}\rtimes_{-1}\mathbb{Z}$.
More generally, when $\nu$ is a finitely generated free group $F(s)$
(with $s>0$) then $\pi$ has one end and $c.d.\pi=2$.
This broader class of groups is the focus of the rest of this paper.

\section{groups of cohomological dimension $2$}

When $c.d.\pi=2$, we may drop the qualification ``strongly",
by the following theorem. 
(This is also so if $\pi$ is a free group.
The arguments below may be adapted to the latter case, 
which is well understood \cite{Hi04a}.)

\begin{theorem}
\label{cd2}
Let $X$ be a $PD_4$-complex with $\pi_1(X)\cong\pi$ such that $c.d.\pi=2$,
and let $w=w_1(X)$.
Then 
\begin{enumerate}
\item 
$C_*(X;\mathbb{Z}[\pi])$ is $\mathbb{Z}[\pi]$-chain homotopy equivalent to 
$D_*\oplus{P[2]}\oplus D_{4-*}^\dagger$, 
where $D_*$ is a projective resolution of $\mathbb{Z}$,
$P[2]$ is a finitely generated projective module $P$ concentrated 
in degree $2$ and $D_{4-*}^\dagger$ is the conjugate dual of $D_*$, 
shifted to terminate in degree $2$;

\item $\Pi=\pi_2(X)\cong{P}\oplus{E^2\mathbb{Z}}$;

\item $\chi(X)\geq2\chi(\pi)$, with equality if and only if $P=0$;

\item $(E^2\mathbb{Z})^\dagger=0$;

\item $\pi_3(X)\cong\Gamma_W(\Pi)\oplus{E^1}\mathbb{Z}$.
\end{enumerate}
Moreover, $P_2(X)\simeq{L}=L_\pi(\Pi,2)$, and
so the homotopy type of $X$ is determined by $\pi$, $w$, $\Pi$,
and the orbit of $k_2(X)\in{H^4(L;\pi_3(X))}$ 
under the actions of  $Aut_\pi(\pi_3(X))$ and $E_0(L)$.

Every nonsingular $w$-hermitean pairing on a finitely generated 
projective $\mathbb{Z}[\pi]$-module is realized by some such $PD_4$-complex.
\end{theorem}

\begin{proof}
Let $C_*=C_*(X;\mathbb{Z}[\pi])$,
and let $D_*$ be the chain complex with $D_0=C_0$, $D_1=C_1$,
$D_2=\mathrm{Im}(\partial_2^C)$ and $D_q=0$ for $q>2$.
Then 
\[0\to {D_2}\to{D_1}\to{D_0}\to\mathbb{Z}\to0\]
is a resolution of the augmentation module.
Since $c.d.\pi\leq2$ and $D_0$ and $D_1$ are free modules $D_2$ is projective,
by Schanuel's Lemma.
Therefore the epimorphism from $C_2$ to $D_2$ splits,
and so $C_*$ is a direct sum $C_*\cong D_*\oplus(C/D)_*$.
Since $X$ is a $PD_4$-complex $C_*$ is chain homotopy equivalent to
the conjugate dual  $C_{4-*}^\dagger$.
The first two assertions follow easily.

On taking homology with simple coefficients $\mathbb{Q}$, we see that 
$\chi(X)=2\chi(\pi)+\mathrm{dim}_\mathbb{Q}\mathbb{Q}\otimes_\pi{P}$.
Hence $\chi(X)\geq2\chi(\pi)$.
Since $\pi$ satisfies the Weak Bass conjecture \cite{Ec86} and $P$ is projective
$P=0$ if and only if $\mathrm{dim}_\mathbb{Q}\mathbb{Q}\otimes_\pi{P}=0$.

Let $\delta:D_2\to D_1$ be the inclusion.
Then $E^2\mathbb{Z}=\mathrm{Cok}(\delta^\dagger)$
and so $(E^2\mathbb{Z})^\dagger=\mathrm{Ker}(\delta^{\dagger\dagger})$.
But $\delta^{\dagger\dagger}=\delta$ is injective, and so
$(E^2\mathbb{Z})^\dagger=0$.

The indecomposable free factors of $\pi$ are 
either one-ended or infinite cyclic, 
and at least one factor has one end, since $c.d.\pi>1$.
Thus $H_3(\widetilde{X};\mathbb{Z})\cong{E^1\mathbb{Z}}$ is a free
$\mathbb{Z}[\pi]$-module, by Lemma \ref{freeE^1}.
Hence $\pi_3(X)\cong\Gamma_W(\Pi)\oplus{E^1\mathbb{Z}}$. 

Since $c.d.\pi=2$ the first $k$-invariant of $X$ is trivial,
and so $P_2(X)\simeq{L}=L_\pi(\Pi,2)$.
Hence the next assertion follows from Lemma \ref{post3}.

The realization result follows from Theorem \ref{constr}.
\end{proof}

It follows immediately from (2), (3) and Theorem \ref{exist}
that  ``$\chi$-minimal", ``order-minimal" and ``strongly minimal" 
are equivalent, when $c.d.\pi=2$.
We shall henceforth use just ``minimal" for such complexes.

It remains unknown  whether every finitely presentable group $\pi$
with $c.d.\pi=2$ has a finite 2-dimensional $K(\pi,1)$-complex.
We shall write $g.d.\pi=2$ if this is so.

\begin{cor}
Let $X$ and $Y$ be $PD_4$-complexes with fundamental group $\pi$ 
such that $c.d.\pi=2$, and $w_1(X)=c_X^*w$ and $w_1(Y)=c_Y^*w$ for some
homomorphism $w:\pi\to\mathbb{Z}^\times$.
Then $X$ and $Y$ are homotopy equivalent if and only if they
have the same minimal model $Z$ and $\lambda_X\cong\lambda_Y$.
\qed
\end{cor}

The minimal model may not be uniquely determined! 
See \S14 below.

\begin{theorem}
\label{cd2min}
Let $Z$ be a minimal $PD_4$-complex with fundamental group $\pi$ such that 
${c.d.\pi=2}$, and let $w=w_1(Z)$, $L=L_\pi(E^2\mathbb{Z},2)$ and 
$\pi_3=\Gamma_W(E^2\mathbb{Z})\oplus{E^1}\mathbb{Z}$.
Then 
\begin{enumerate}
\item{the homotopy type} of $Z$ is determined by $\pi$, $w$ and the orbit of 
$k_2(Z)\in{H^4}(L;\pi_3)$ under the actions of 
$Aut_\pi(\Gamma_W(E^2\mathbb{Z})\oplus{E^1}\mathbb{Z})$ and $E_0(L)$;
\item{if} $\widehat{Z}$ is another such complex then $P_2(\widehat{Z})\simeq{P_2(Z)}$
if and only if there is an isomorphism $f:\pi_1(\widehat{Z})\cong\pi$ such that $w_1(\widehat{Z})=f^*w$;
\item{the $v_2$-type} of $Z$ is $\mathrm{II}$ or $\mathrm{III}$,
i.e., $v_2(Z)=c_Z^*V$ for some $V\in{H^2}(\pi;\mathbb{F}_2)$;
\item{if} $Z$ is orientable then it has signature $\sigma(Z)=0$;
\item{for every} $v\in{H^2}(\pi;\mathbb{F}_2)$ there is a 
minimal $PD_4$-complex $Z$ with $\pi_1(Z)\cong\pi$, $w_1(Z)=c_Z^*w$ and $v_2(Z)=c_Z^*v$.
\end{enumerate}
\end{theorem}

\begin{proof}
The first assertion follows from Theorem \ref{cd2}, since $P_2(Z)\simeq{L}$.

If $f:\pi_1(\widehat{Z})\cong\pi$ is an isomorphism such that $w_1(\widehat{Z})=f^*w$
then $\pi_2(\widehat{Z})\cong\Pi$ and so $P_2(\widehat{Z})\simeq{P_2(Z)}$.
Conversely,  
$Ext^2_{\mathbb{Z}[\pi]}(\Pi,\mathbb{Z}[\pi])=\mathbb{Z}^w$, 
so  $\pi$ and $\Pi$ determine $w$.

Let $H=c_Z^*H^2(\pi;\mathbb{F}_2)$.
Then $\dim{H^2}(Z;\mathbb{F}_2)=2\dim{H}$, since $\chi(Z)=2\chi(\pi)$,
and $H\cup{H}=0$, since $c.d.\pi=2$.
In particular, $v_2(Z)\cup{h}=h\cup{h}=0$ for all $h\in{H}$.
Therefore $v_2(Z)\in{H}$, by the nonsingularity of Poincar\'e duality.
If $Z$ is orientable a similar argument with coefficients $\mathbb{Q}$
shows that $H^2(Z;\mathbb{Q})$ has a self-orthogonal summand of rank
$\beta_2(\pi)=\frac12\beta_2(Z)$, and so $\sigma(Z)=0$.

We may use a presentation
$\mathcal{P}=\langle {x_1,\dots,x_g}|{w_1,\dots,w_r}\rangle$ for $\pi$
as a pattern for constructing a 5-dimensional handlebody
$D^5\cup{g}h^1\cup{r}h^2\simeq{C(\mathcal{P})}$, 
but we refine the construction by taking non-orientable 1-handles 
for generators $x$ with $w(x)\not=0$ and
using $w_2=v+w^2$ to twist the framings of the 2-handles corresponding to the relators. 
Let $M$ be the boundary of the resulting 5-manifold.
Then $\pi_1(M)\cong\pi$, $w_1(M)=c_M^*w$ and $v_2(M)=c_M^*v$.
Since $E^3\mathbb{Z}=0$ the pairing $\lambda_M$ is nonsingular,
by part (4) of Lemma \ref{lambdaE}.
Hence $M$ has a strongly minimal model $Z$, by Corollary \ref{existcor}.
Since $c_M$ factors through $c_Z$ via a 2-connected degree-1 map,
$Z$ has the required properties.
\end{proof}

The argument for realizing $v$ is taken from \cite{HKT},
where it is shown that if $C(\mathcal{P})$ is aspherical then
the manifold $M$ is itself minimal.

How does $k_2(X)$ determine $v_2(X)$ (and conversely)?
This seems to be a crucial question.
We expect that the orbit of the $k$-invariant 
is detected by the refined $v_2$-type,
but have only proven this in some cases.
(See Theorems \ref{pd2gp} and \ref{min} below.)

Since $f_{X,3}$ is 4-connected $H^4(f_{X,3};\mathbb{F}_2)$ is injective,
and so it is an isomorphism if also $\beta_2(X;\mathbb{F}_2)>0$,
by the nondegeneracy of Poincar\'e duality.
Thus  the ring $H^*(X;\mathbb{F}_2)$ and hence $v_2(X)$ 
should be directly computable from $H^*(P_3(X);\mathbb{F}_2)$.

If $X$ is of $v_2$-type II or III then any minimal model for $X$ must have compatible $v_2$-type, by Lemma \ref{v2type}.
What happens if $v_2(\widetilde{X})\not=0$? 
Does $X$ have a minimal model $Z$ with $v_2(Z)=0$?
(If $\pi$ is a $PD_2$-group then $X$ has minimal models of each type,
by Theorem \ref{s2bdle} below.)

We show next that the class of groups considered here is the largest  
for which every $PD_4$-complex with such a fundamental group 
has a strongly minimal model.

\begin{theorem}
\label{om=sm}
Let $\pi$ be a finitely presentable group and $w:\pi\to\mathbb{Z}^\times$
be a homomorphism.
Then the following are equivalent:
\begin{enumerate}
\item{every} $PD_4$-complex with fundamental group $\pi$ and orientation character $w$ has a strongly minimal model;

\item{every} order minimal $PD_4$-complex with fundamental group $\pi$ and orientation character $w$ is strongly minimal;

\item$c.d.\pi\leq2$.

\end{enumerate}
\end{theorem}

\begin{proof}
The equivalence $(1)\Leftrightarrow(2)$ is clear.

Suppose that (1) holds, 
and let $K$ be a finite $2$-complex with $\pi_1(K)=\pi$.
Then $K$ has a 4-dimensional thickening $N$ which is a handlebody with
only 0-, 1- and 2-handles, and with $w_1(N)=c_N^*w$.
(Cf. the final paragraph of Theorem 30.)
Let $M=D(N)$ be the closed $4$-manifold obtained by doubling $N$, 
and let $j:N\to{M}$ be one of the canonical inclusions.
Then $(\pi_1(M),w_1(M))\cong(\pi,w)$, 
and collapsing the double gives a retraction $r:M\to{N}$.
We may assume that $c_M={c_Nr}$.

Since $N$ is a retract of $M=D(N)$, we have 
\[
H^2(M;\mathbb{Z}[\pi])\cong
{H^2(N;\mathbb{Z}[\pi])}\oplus{H^2(M,N;\mathbb{Z}[\pi])}.
\]
Let $E=E^2\mathbb{Z}$, and $H=\overline{H^2(M;\mathbb{Z}[\pi])}$.
Since $c_M\sim{c_Nr}$, we have
\[
H/E\cong
(\overline{H^2(N;\mathbb{Z}[\pi])}/E)\oplus\overline{H^2(M,N;\mathbb{Z}[\pi])}.
\]
Since $M$ has a strongly minimal model $H/E$ is projective, 
by Corollary \ref{existcor}.
Hence so is the direct summand $\overline{H^2(M,N;\mathbb{Z}[\pi])}$.
This summand is $\overline{H^2(M,N;\mathbb{Z}[\pi])}\cong{H_2(N;\mathbb{Z}[\pi])}$,
by Poincar\'e-Lefshetz duality.

Now $H_2(N;\mathbb{Z}[\pi])\cong{P=H_2(K;\mathbb{Z}[\pi])}$, 
since $K\simeq{N}$.
Hence the augmentation $\mathbb{Z}[\pi]$-module
$\mathbb{Z}$ has a projective resolution of length 3, 
given by $C_*(K;\mathbb{Z}[\pi])$ in degrees $\leq2$
and by the module $P$ in degree 3, with differential $\partial_3$ 
given by the natural inclusion of $P$ as the submodule of 2-cycles.
Thus $c.d.\pi\leq3$.
We also have $E^3\mathbb{Z}\cong{E^\dagger}$, 
since there is a strongly minimal $PD_4$-complex realizing the pair $(\pi,w)$.
Therefore $c.d.\pi\leq2$, by Lemma \ref{cd3}.

The converse implication $(3)\Rightarrow(1)$ follows from Theorem \ref{cd2min}.
\end{proof}

The group $\pi$ is a $PD_2$-group if and only if $E^2\mathbb{Z}$ 
is infinite cyclic \cite{Bo}.
The minimal $PD_4$-complexes are then the total spaces 
of $S^2$-bundles over aspherical closed surfaces, 
by Theorem 5.10 of \cite{Hi}.
We shall review this case in \S14 below.

Otherwise $E^2\mathbb{Z}$ is not finitely generated.
If $\pi\cong\nu\rtimes\mathbb{Z}$, with $\nu$ finitely presentable, 
then $\nu\cong{F(s)}$ for some $s>0$ and $\pi$ has one end.

\begin{theorem}
\label{freebyZ}
Let $\pi=F(s)\rtimes_\alpha\mathbb{Z}$, where $s>0$, 
and let $w:\pi\to\mathbb{Z}^\times$ be a homomorphism.
Then the minimal $PD_4$-complexes $X$ with fundamental group $\pi$
and $w_1(X)=c_X^*w$ are homotopy equivalent to mapping tori,
and their homotopy types may be distinguished by their refined $v_2$-types.
\end{theorem}

\begin{proof}
A $PD_3$-complex $N$ with fundamental group $F(s)$ is homotopy equivalent to
$\#^s(S^2\times{S^1})$ (if it is orientable) or $\#^s(S^2\tilde\times{S^1})$ (otherwise).
There is a natural representation of $Aut(F(s))$ by isotopy classes of based
homeomorphisms of $N$, 
and the group of based self homotopy equivalences $E_0(N)$ 
is a semidirect product $D\rtimes{Aut(F(s))}$,
where $D$ is generated by Dehn twists about nonseparating 2-spheres. 
We may identify $D$ with $(\mathbb{Z}/2\mathbb{Z})^s=H^1(F(s);\mathbb{F}_2)$, 
and then $E_0(N)=(\mathbb{Z}/2\mathbb{Z})^s\rtimes{Aut(F(s))}$,
with the natural action of $Aut(F(s))$ \cite{He77}.

Thus a minimal $PD_4$-complex $X$ with $\pi_1(X)\cong\pi$
is homotopy equivalent to the mapping torus $M(f)$ 
of a based self-homeomorphism $f$ of such an $N$, 
with $w_1(N)=w|_{F(s)}$, and $f$ has image $(d,\alpha)$ in $E_0(N)$.
Let $\delta(f)$ be the image of $d$ in
$H^2(\pi;\mathbb{F}_2)=H^1(F(s);\mathbb{F}_2)/(\alpha-1)H^1(F(s);\mathbb{F}_2)$.
If $g$ is another based self-homeomorphism of $N$
with image $(d',\alpha)$ and $\delta(g)=\delta(f)$
then $d-d'=(\alpha-1)(e)$ for some $e\in{D}$.
Hence $(d,\alpha)$ and $(d',\alpha)$ are conjugate,
and so $M(g)\simeq{M(f)}$.

All minimal $PD_4$-complexes $X$ with $\pi_1(X)=\pi$ 
and $w_1(X)=w$ have the same Postnikov 2-stage $L=P_2(X)$, 
all have $v_2$-type II or III,
and  there is such a $PD_4$-complex $X$ with $v_2(X)=V$, 
for every  $V\in{H^2}(\pi;\mathbb{F}_2)$, by Theorem \ref{cd2}
and its corollary.
Hence the refined $v_2$-type is a complete invariant. 
\end{proof}

If $\beta_1(\pi)>1$ then $N$ may not be determined by $M(f)$.
For instance if $N=S^2\tilde\times{S^1}$ then $M(id_N)=N\times{S^1}$ 
is also the mapping torus of an orientation reversing self homeomorphism 
of $S^2\times{S^1}$.
It is a remarkable fact that if $\pi=F(s)\rtimes_\alpha\mathbb{Z}$,
$s>1$ and $\beta_1(\pi)\geq2$ then $\pi$ is such a semidirect product 
for infinitely many distinct values of $s$ \cite{Bu}.
However this does not affect our present considerations. 

The refined $v_2$-type is also a complete invariant of the homotopy type
of a minimal $PD_4$-complex when $\pi$ is a $PD_2$-group.
This case is treated in \S15 below.
The argument given there is generalized in Theorem \ref{min}
to other 2-dimensional duality groups,
subject to a technical algebraic condition.
This condition holds if $w=1$ and $\pi$ is 
an ascending HNN extension $\mathbb{Z}*_m$, 
by Theorem \ref{BStf},
while if $m$ is even there is an unique minimal model, by Corollary \ref{beta=0}.

\section{realizing $k$-invariants}

For the rest of this paper we shall assume that $\pi$ is a finitely presentable,
$2$-dimensional duality group (i.e., $\pi$ has one end and $c.d.\pi=2$).
The homotopy type of a minimal $PD_4$-complex  $X$ with $\pi_1(X)=\pi$ 
is determined by $\pi$, $w$ and the orbit of $k_2(X)$ under the actions 
of $E_0(L)$ and $Aut(\Gamma_W(\Pi))$, by Corollary 26.
We would like to find more explicit and accessible invariants that 
characterize such orbits.
We would also like to know which $k$-invariants give rise to $PD_4$-complexes.
Note first  that $H_3(\widetilde{X};\mathbb{Z})=H_4(\widetilde{X};\mathbb{Z})=0$, 
 since $\pi$ has one end.

\begin{theorem}
\label{Postcd2}
Let $\pi$ be a finitely presentable, $2$-dimensional duality  group,
and let $w:\pi\to\mathbb{Z}^\times$ be a homomorphism.
Let $\Pi=E^2\mathbb{Z}$ and let $k\in{H^4}(L;\Gamma_W(\Pi))$.
Then 
\begin{enumerate}
\item
there is a $4$-complex $Y$ with $\pi_1(Y)\cong\pi$, $\pi_2(Y)\cong\Pi$,
$\pi_3(Y)\cong\Gamma_W(\Pi)$,
$k_2(Y)=k$ and $H_3(\widetilde{Y};\mathbb{Z})=H_4(\widetilde{Y};\mathbb{Z})=0$ 
if and only if the homomorphism from $H_4(K(\Pi,2);\mathbb{Z})$ to $\Gamma_W(\Pi)$
determined by $p_L^*k$ is an isomorphism;

\item
any such complex $Y$ is finitely dominated, and
we may assume that $Y$ is a finite complex if $\pi$ is of type $FF$;

\item $\overline{H^2(Y;\mathbb{Z}[\pi])}\cong\Pi$;

\item $H_4(Y;\mathbb{Z}^w)\cong\mathbb{Z}$ and cap product with a generator induces 
isomorphisms $\overline{H^p(Y;\mathbb{Z}[\pi])}\cong{H_{4-p}}(Y;\mathbb{Z}[\pi])$, 
for $p\not=2$.
\end{enumerate}
\end{theorem}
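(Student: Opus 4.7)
The forward direction of (1) is immediate from the Whitehead exact sequence for $Y$,
\[
\pi_4(Y) \to H_4(\widetilde{Y};\mathbb{Z}) \to \Gamma_W(\Pi) \xrightarrow{k_2(\widetilde{Y})} \pi_3(Y) \to H_3(\widetilde{Y};\mathbb{Z}) \to 0,
\]
which under $H_3(\widetilde{Y};\mathbb{Z}) = H_4(\widetilde{Y};\mathbb{Z}) = 0$ forces $k_2(\widetilde{Y}) = p_L^*k_2(Y) = p_L^*k$ to be an isomorphism.

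For the converse and part (2), I plan to construct $Y$ by realizing at the chain level the model $C_\ast = D_\ast \oplus D_{4-\ast}^\dagger$ identified in Theorem \ref{cd2}. Choose a projective resolution $0 \to D_2 \to D_1 \to D_0 \to \mathbb{Z} \to 0$ of the augmentation module, with $D_0, D_1$ finitely generated free and $D_2$ finitely generated projective (taken finite free when $\pi$ is of type $FF$). Starting from a presentation $2$-complex $K$ for $\pi$, wedge on finitely many $2$-spheres and attach finitely many $3$-cells to obtain a $3$-complex $K_3$ with $\pi_2(K_3) \cong \Pi = E^2\mathbb{Z}$, using that $\Pi$ is finitely generated ($c.d.\pi \leq 2$ makes $\pi$ of type $FP_3$) and that the module-level target is realizable by a Schanuel-lemma argument with the resolution $D_\ast$. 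Since $c.d.\pi = 2$, $k_1(K_3) = 0$ and $P_2(K_3) \simeq L$; the Whitehead sequence (with $H_4(\widetilde{K}_3;\mathbb{Z}) = 0$ trivially) gives $0 \to \Gamma_W(\Pi) \to \pi_3(K_3) \to H_3(\widetilde{K}_3;\mathbb{Z}) \to 0$. Then attach $4$-cells along (i) lifts of a finite generating set for $H_3(\widetilde{K}_3;\mathbb{Z})$ to kill $H_3$, and (ii) further classes in the $\Gamma_W(\Pi)$-subgroup, chosen so that $k_2(Y) = k$. Under the hypothesis that $p_L^*k$ is an isomorphism, the Whitehead sequence for the resulting $Y$ collapses to $\Gamma_W(\Pi) \xrightarrow{\cong} \pi_3(Y)$, forcing $H_4(\widetilde{Y};\mathbb{Z}) = 0$. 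Finite domination in (2) follows because only finitely many cells are used, and finiteness when $\pi$ is of type $FF$ comes from choosing $D_\ast$ finite free.

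For (3), the chain complex $C_\ast(\widetilde{Y}) \simeq D_\ast \oplus D_{4-\ast}^\dagger$ is self-dual after swapping the two summands and shifting by $4$: $\mathrm{Hom}_{\mathbb{Z}[\pi]}(C_\ast, \mathbb{Z}[\pi])$ is chain isomorphic to $C_{4-\ast}$ up to conjugation. Since $\pi$ is a one-ended $2$-dimensional duality group, $E^0\mathbb{Z} = E^1\mathbb{Z} = 0$, so $\overline{H^p(Y;\mathbb{Z}[\pi])} = 0$ for $p \in \{0,1,3\}$ and $\overline{H^4(Y;\mathbb{Z}[\pi])} \cong \mathbb{Z}$. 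Tensoring with $\mathbb{Z}^w$ gives $H_4(Y;\mathbb{Z}^w) \cong \mathbb{Z}$; with a chosen generator $[Y]$, cap product $- \cap [Y]$ realizes the summand-swap chain equivalence, hence induces the required Poincar\'e duality isomorphisms in degrees $p \neq 2$.

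The main obstacle is step (ii) of the $4$-cell attachment: showing that attachments along elements of $\Gamma_W(\Pi) \subset \pi_3(K_3)$ can realize any prescribed $k$ with $p_L^*k$ an isomorphism, without undoing step (i) or introducing extra $H_4$. The key point is that attaching a $4$-cell along $\phi$ modifies $k_2$ by a class determined by $\phi$, and as $\phi$ ranges over $\Gamma_W(\Pi)$ these classes sweep out exactly the $k$-invariants with $p_L^*$-pullback an isomorphism. Verifying this precisely is the technical heart of the argument.
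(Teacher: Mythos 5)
The forward direction of (1) and the outline of (3) are fine, but your construction for the converse of (1) has a genuine gap, and it is not the one you flagged as the ``technical heart''. You claim that once $p_L^*k$ is an isomorphism the Whitehead sequence ``forces $H_4(\widetilde{Y};\mathbb{Z})=0$''. It does not: exactness at $\Gamma_W(\Pi)$ only gives $b_Y=0$, and exactness at $H_4(\widetilde{Y};\mathbb{Z})$ then says that $H_4(\widetilde{Y};\mathbb{Z})$ equals the image of the Hurewicz map $hwz_4$ --- which, for a $4$-complex obtained by attaching $4$-cells, is the whole of $Z_4=\mathrm{Ker}(\partial_4)$ and is typically nonzero (attach a $4$-cell along a class whose Hurewicz image is not part of a basis of $H_3(\widetilde{K}_3;\mathbb{Z})$ and you create $4$-dimensional homology in the universal cover). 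Killing $H_4(\widetilde{Y};\mathbb{Z})$ is the real difficulty and cannot be done by attaching further $4$-cells. The paper's proof runs in the opposite direction: it starts from the Postnikov $3$-stage $P(k)$, so that the $k$-invariant is prescribed by construction (which also disposes of the realization problem you deferred), takes its $4$-skeleton, shows via Schanuel's Lemma that $H_4$ of the universal cover is a projective direct summand of $C_4$, stabilizes by wedging on copies of $S^3$ to make it free, and kills it by attaching $5$-cells. The resulting complex $Q$ is $5$-dimensional, and one then invokes Wall's finiteness theorem \cite{Wa66} --- since $C_*(\widetilde{Q})$ is chain homotopy equivalent to the finite projective complex $D_*\oplus{D_{2-*}^\dagger[2]}$ --- to replace $Q$ by a homotopy equivalent finitely dominated complex of dimension at most $4$. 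This detour through dimension $5$ and back via Wall is essential and is entirely absent from your proposal; it is also the actual source of finite domination in (2), which does not follow from ``only finitely many cells are used'' (when $\pi$ is not of type $FF$ the relevant projectives need not be stably free, and your finite-cell construction would wrongly yield a finite complex in all cases).

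In part (3) you also overclaim: the assertion that $-\cap[Y]$ ``realizes the summand-swap chain equivalence'' would give duality in degree $2$ as well, i.e.\ that $Y$ is a $PD_4$-complex --- exactly the question the paper declares open in the paragraph following the theorem. What is actually proved is weaker: for $p\neq0,2,4$ both $\overline{H^p(Y;\mathbb{Z}[\pi])}$ and $H_{4-p}(Y;\mathbb{Z}[\pi])$ vanish, so there is nothing to check, and at $p=4$ one verifies that $\varepsilon_{w\#}:H^4(Y;\mathbb{Z}[\pi])\to{H^4}(Y;\mathbb{Z}^w)$ is an isomorphism (it is surjective because $Y$ is $4$-dimensional, and both groups are infinite cyclic) to conclude that cap product with $[Y]$ is an isomorphism there. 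Your argument should be restricted to exactly these degrees.
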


\begin{proof}
If $Y$ is such a 4-complex then $p_L^*k$ is an isomorphism,
by the exactness of the Whitehead sequence.

Suppose, conversely, that $p_L^*k$ is an isomorphism.
Let $P(k)$ denote the Postnikov 3-stage determined by 
$k\in{H^4}(L;\Gamma_W(\Pi))$, and let $P=P(k)^{[4]}$.
Let $C_*=C_*(\widetilde{P})$ be the equivariant 
cellular chain complex for $\widetilde{P}$, and
let $B_q\leq{Z_q}\leq{C_q}$ be the submodules
of $q$-boundaries and $q$-cycles, respectively.
Clearly $H_1(C_*)=0$ and $H_2(C_*)\cong\Pi$,
while $H_3(C_*)=0$, since $p_L^*k$ is an isomorphism.
Hence there are exact sequences
\[0\to{B_1}\to{C_1}\to{C_0}\to\mathbb{Z}\to0,\]
\[0\to{B_3}\to{C_3}\to{Z_2}\to\Pi\to0\]
and 
\[0\to{H_4(C_*)}=Z_4\to{C_4}\to{B_3}\to0.\]
Schanuel's Lemma implies that $B_1$ is projective, since $c.d.\pi=2$.
Hence $C_2\cong{B_1}\oplus{Z_2}$ and so $Z_2$ is also projective.
It then follows that $B_3$ is also projective,
and so $C_4\cong{B_3}\oplus{Z_4}$.
Thus $H_4(C_*)=Z_4$ is a projective direct summand of $C_4$.

After replacing $P$ by $P\vee{W}$,
where $W$ is a wedge of copies of $S^4$,
if necessary, we may assume that $Z_4=H_4(P;\mathbb{Z}[\pi])$ is free.
Since $\Gamma_W(\Pi)\cong\pi_3(P)$ the Hurewicz homomorphism 
from $\pi_4(P)$ to $H_4(P;\mathbb{Z}[\pi])$ is onto,
by the exactness of the Whitehead sequence.
We may then attach 5-cells along maps
representing a basis for $Z_4$ to obtain a countable 5-complex $Q$
with 3-skeleton $Q^{[3]}=P(k)^{[3]}$ and
with $H_q(\widetilde{Q};\mathbb{Z})=0$ for $q\geq3$.
The inclusion of $P$ into $P(k)$ extends to a 4-connected map 
from $Q$ to $P(k)$.

Let $D_*$ be  the finite projective resolution of $\mathbb{Z}$ 
determined by a finite presentation for $\pi$.
Dualizing gives a finite projective resolution $E_*=D_{2-*}^\dagger$ 
for $\Pi=E^2\mathbb{Z}$.
Then $C_*(\widetilde{Q})$ is chain homotopy equivalent to $D_*\oplus{E_*[2]}$,
which is a finite projective chain complex.
It follows from the finiteness conditions of Wall that $Q$ 
is homotopy equivalent to a finitely dominated complex $Y$ 
of dimension $\leq4$ \cite{Wa66}.
(The splitting reflects the fact that $c_Y$ is a retraction, 
since $k_1(Y)=0$.)
The homotopy type of $Y$ is uniquely determined by the data, as in Lemma \ref{post3}.

If $\pi$ is of type $FF$ then $B_1$ is stably free, by Schanuel's Lemma.
Hence $Z_2$ is also stably free.
Since dualizing a finite free resolution of $\mathbb{Z}$
gives a finite free resolution of $\Pi=E^2\mathbb{Z}$ we see in turn that
$B_3$ must be stably free, and so $C_*(\widetilde{Y})$ is chain homotopy
equivalent to a finite free complex.
Hence $Y$ is homotopy equivalent to a finite 4-complex \cite{Wa66}.

Condition (3) follows immediately from the 4-term evaluation sequence, 
since $\Pi^\dagger=E^2\mathbb{Z}^\dagger=0$, by part (4) of Theorem \ref{cd2}.

We see easily that $\overline{H^4(Y;\mathbb{Z}[\pi])}=E^2\Pi\cong\mathbb{Z}$
and $H^4(Y;\mathbb{Z}^w)\cong{Ext^2(\Pi;\mathbb{Z}^w)}\cong\mathbb{Z}$.
The homomorphism $\varepsilon_{w\#}:H^4(Y;\mathbb{Z}[\pi])\to{H^4}(Y;\mathbb{Z}^w)$
induced by $\varepsilon_w$ is surjective,
since $Y$ is 4-dimensional, 
and therefore is an isomorphism.
We also have $H_4(Y;\mathbb{Z}^w)\cong{Tor_2(\mathbb{Z}^w;\Pi)}\cong
\mathbb{Z}^w\otimes_\pi\mathbb{Z}[\pi]\cong\mathbb{Z}$.
Let $[Y]$ be a generator of $H_4(Y;\mathbb{Z}^w)$.
Then evaluation on $[Y]$ induces an isomorphism from
$\overline{H^4(Y;\mathbb{Z}[\pi])}$
to $H_0(Y;\mathbb{Z}[\pi])$. 
Hence $-\cap[Y]$ induces isomorphisms  from $\overline{H^p(Y;\mathbb{Z}[\pi])}$
to $H_{4-p}(Y;\mathbb{Z}[\pi])$ for all $p\not=2$,
since $H^p(Y;\mathbb{Z}[\pi])=H_{4-p}(Y;\mathbb{Z}[\pi])=0$ if $p\not=2$ or 4.
\end{proof}


Cap product with $[Y]$ in degree 2 is determined by an integer,
since $\overline{H^2(Y;\mathbb{Z}[\pi])}\cong{E^2\mathbb{Z}}=\Pi\cong
{H_2(Y;\mathbb{Z}[\pi])}$, so $Hom_{\mathbb{Z}[\pi]}(\overline{H^2(Y;\mathbb{Z}[\pi])},H_2(Y;\mathbb{Z}[\pi]))\cong
{End(E^2\mathbb{Z})}\cong\mathbb{Z}$.
The 4-complex $Y$ is a $PD_4$-complex if and only if this integer is $\pm1$.
The obvious question is: what is this integer?
Is it always $\pm1$?
The complex $C_*$ is  chain homotopy equivalent to its dual,
but is the chain homotopy equivalence given by slant product with $[Y]$?

If $\pi$ is either a semidirect product $F(s)\rtimes\mathbb{Z}$
or the fundamental group of a Haken $3$-manifold $M$
then $\widetilde{K}_0(\mathbb{Z}[\pi])=0$,
i.e., projective $\mathbb{Z}[\pi]$-modules are stably free \cite{Wd78}.
(This is not yet known for all torsion free one relator groups.)
In such cases finitely dominated complexes are homotopy finite.

\section{$PD_2$-groups}

The case of most natural interest is when $\pi$ is a $PD_2$-group, 
i.e., is the fundamental group of an aspherical closed surface $F$. 
If $Z$ is the minimal model for such a $PD_4$-complex $X$
then $\Pi=\pi_2(Z)$ and $\Gamma_W(\Pi)$ are infinite cyclic, 
and $Z$ is homotopy equivalent to the total space of a $S^2$-bundle over 
a closed aspherical surface.
(The action $u:\pi\to{Aut}(\Pi)$ is given by $u(g)=w_1(\pi)(g)w(g)$
for all $g\in\pi$, by Lemma 10.3 of \cite{Hi},
while the induced action on $\Gamma_W(\Pi)$ is trivial.)
There are two minimal models for each pair $(\pi,w)$,
distinguished by their $v_2$-type.
This follows easily from the fact that the inclusion of $O(3)$ into
the monoid of self-homotopy equivalences $E(S^2)$ 
induces a bijection on components and an isomorphism on fundamental groups.
(See Lemma 5.9 of \cite{Hi}.)
It is instructive to consider this case 
from the point of view of $k$-invariants also, 
as we shall extend the argument of this section 
to other groups in Theorem \ref{min} below.

Suppose first that $\pi$ acts trivially on $\Pi$. 
Then $L\simeq{K\times{\mathbb{CP}^\infty}}$.
Fix generators $t$, $x$, $\eta$ and $z$ for 
$H^2(\pi;\mathbb{Z})$, $\Pi$, $\Gamma_W(\Pi)$ and 
$H^2(\mathbb{CP}^\infty;\mathbb{Z})=Hom(\Pi,\mathbb{Z})$, 
respectively, such that $z(x)=1$ and $2\eta=[x,x]$.
(These groups are all infinite cyclic,
but we should be careful to distinguish the generators,
as the Whitehead product pairing of $\Pi$ with itself into 
$\Gamma_W(\Pi)$ is not the pairing given by multiplication.)
Let $t,z$ denote also the generators of ${H^2}(L;\mathbb{Z})$ induced 
by the projections to $K$ and $\mathbb{CP}^\infty$, respectively.
Then $H^2(\pi;\Pi)$ is generated by $t\otimes{x}$,
while $H^4(L;\Gamma_W(\Pi))$ is generated by $tz\otimes\eta$ and
$z^2\otimes\eta$.
(Note that $t$ has order 2 if $w_1(\pi)\not=0$.)

\begin{lemma}
The $k$-invariant $k_2(S^2)$ generates $H^4(CP^\infty;\mathbb{Z})$.
\end{lemma}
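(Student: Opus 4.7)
The plan is to identify $k_2(S^2)$ explicitly via the Whitehead exact sequence recalled at the end of \S3. First note that since $S^2$ is simply-connected with $\pi_2(S^2)\cong\pi_3(S^2)\cong\mathbb{Z}$, its Postnikov 2-stage is $P_2(S^2)\simeq K(\mathbb{Z},2)=\mathbb{CP}^\infty$, and hence $k_2(S^2)\in H^4(\mathbb{CP}^\infty;\pi_3(S^2))=H^4(\mathbb{CP}^\infty;\mathbb{Z})$. As explained in \S3, under the natural isomorphism
\[
H^4(K(\mathbb{Z},2);\mathbb{Z})\cong\mathrm{Hom}(H_4(K(\mathbb{Z},2);\mathbb{Z}),\mathbb{Z})\cong\mathrm{Hom}(\Gamma_W(\mathbb{Z}),\mathbb{Z}),
\]
the class $k_2(S^2)$ is precisely the homomorphism $\Gamma_W(\pi_2(S^2))\to\pi_3(S^2)$ occurring as the connecting map in the Whitehead sequence.

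Next I would specialize the Whitehead sequence to $X=S^2$. Since $S^2$ is simply-connected, $\widetilde{S^2}=S^2$, and because $S^2$ is 2-dimensional, $H_3(S^2;\mathbb{Z})=H_4(S^2;\mathbb{Z})=0$. The Whitehead sequence therefore collapses to
\[
0\to\Gamma_W(\mathbb{Z})\xrightarrow{\,k_2(S^2)\,}\pi_3(S^2)\to0,
\]
which forces $k_2(S^2):\Gamma_W(\mathbb{Z})\to\pi_3(S^2)$ to be an isomorphism.

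Finally, since both $\Gamma_W(\mathbb{Z})$ and $\pi_3(S^2)$ are infinite cyclic, this isomorphism sends a generator to a generator, so under the identification $H^4(\mathbb{CP}^\infty;\mathbb{Z})\cong\mathrm{Hom}(\Gamma_W(\mathbb{Z}),\mathbb{Z})\cong\mathbb{Z}$ the class $k_2(S^2)$ is a generator, as claimed. The only subtle point is checking that the identification $H^4(K(\mathbb{Z},2);\mathbb{Z})\cong\mathrm{Hom}(\Gamma_W(\mathbb{Z}),\pi_3(S^2))$ agrees with the one implicitly used in the definition of $k_2$ in \S3; but this compatibility is precisely what makes the Whitehead sequence encode the second $k$-invariant, so the result is essentially an unpacking of the definitions once the vanishing of $H_3(S^2;\mathbb{Z})$ and $H_4(S^2;\mathbb{Z})$ is in hand.
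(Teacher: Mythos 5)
Your proof is correct, but it takes a different route from the paper's. The paper argues via the principal fibration $P_3(S^2)\to\mathbb{CP}^\infty\xrightarrow{h}K(\mathbb{Z},4)$ classified by $k_2(S^2)=h^*\iota_{\mathbb{Z},4}$: since $P_3(S^2)$ is obtained from $S^2$ by attaching cells of dimension $\geq5$, $H^4(P_3(S^2);\mathbb{Z})=0$, so the Serre spectral sequence forces $h^*$ to be onto in degree $4$, and hence $k_2(S^2)$ generates $H^4(\mathbb{CP}^\infty;\mathbb{Z})$. You instead specialize the Whitehead exact sequence of \S3 to $X=S^2$, where the vanishing of $H_3(S^2;\mathbb{Z})$ and $H_4(S^2;\mathbb{Z})$ collapses it to an isomorphism $\Gamma_W(\mathbb{Z})\cong\pi_3(S^2)$, and then read off the generator through the identification $H^4(K(\mathbb{Z},2);\mathbb{Z})\cong\mathrm{Hom}(\Gamma_W(\mathbb{Z}),\mathbb{Z})$. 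Your version is closer in spirit to how the paper treats the same issue elsewhere (the proof of Theorem \ref{Postcd2} deduces that $p_L^*k$ is an isomorphism ``by the exactness of the Whitehead sequence''), and it makes the role of $\Gamma_W$ explicit, at the cost of having to invoke the compatibility between the Postnikov $k$-invariant and the Whitehead boundary map --- which you correctly flag, and which is part of the package of \cite{Wh50} and \cite{Ba'} that \S3 already assumes. The paper's fibration argument avoids that identification entirely, needing only the cell structure of $P_3(S^2)$ and a spectral sequence edge argument. Both are sound; no gap.
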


\begin{proof}
Let $h:CP^\infty\to{K}(\mathbb{Z},4)$ be the fibration with homotopy fibre 
$P_3(S^2)$ corresponding to $k_2(S^2)$.
Since $P_3(S^2)$ may be obtained by adjoining cells of dimension $\geq5$ to 
$S^2$ we see that $H^4(P_3(S^2);\mathbb{Z})=0$.
It follows from the spectral sequence of the fibration that
$h^*$ maps $H^4({K}(\mathbb{Z},4);\mathbb{Z})$ onto 
$H^4(CP^\infty;\mathbb{Z})$,
and so $k_2(S^2)=h^*\iota_{\mathbb{Z},4}$ generates 
$H^4(CP^\infty;\mathbb{Z})$.
\end{proof}

Since $\widetilde{Z}\simeq{S^2}$, the image of $k_2(Z)$ in
$H^4(\widetilde{L};\mathbb{Z})\cong\mathbb{Z}$ generates this group.
Hence $k_2(Z)=\pm(z^2\otimes\eta+mtz\otimes\eta)$ for some $m\in\mathbb{Z}$.
The action of $[K,L]_K=[K,\mathbb{CP}^\infty]\cong{H^2(\pi;\mathbb{Z})}$
on ${H^2}(L;\mathbb{Z})$ is generated by $t\mapsto{t}$ and $z\mapsto{z+t}$.  
The action on $H^4(L;\Gamma_W(\Pi))$ is then given by
$tz\otimes\eta\mapsto{tz}\otimes\eta$ and 
$z^2\otimes\eta\mapsto{z^2\otimes\eta+2tz\otimes\eta}$. 
There are thus two possible $E_0(L)$-orbits of $k$-invariants,
and each is in fact realized by the total space of
an $S^2$-bundle over the surface $K$.

If the action $u$ is nontrivial these calculations 
go through essentially unchanged with 
coefficients $\mathbb{F}_2$ instead of $\mathbb{Z}$.
There are again two possible $E_\pi(L)$-orbits of $k$-invariants,
and each is realized by an $S^2$-bundle space.

In all cases the orbits of $k$-invariants correspond to
the elements of $H^2(\pi;\mathbb{F}_2)=\mathbb{Z}/2\mathbb{Z}$.
In fact the $k$-invariant may be detected by the Wu class.
Let $[c]_2$ denote the image of a cohomology class under reduction 
{\it mod\/} (2).
Since $k_2(Z)$ has image 0 in $H^4(Z;\Pi)$ 
it follows that $[z]_2^2\equiv{m[tz]_2}$ in $H^4(Z;\mathbb{F}_2)$.
This holds also if $\pi$ is nonorientable or the action $u$ is nontrivial,
and so $v_2(Z)=m[z]_2$ and the orbit of $k_2(Z)$ determine each other.

If $X$ is not minimal and $v_2(\widetilde{X})\not=0$ 
then the minimal model $Z$ is not uniquely determined by $X$. 
Nevertheless we have the following results. 

\begin{theorem} 
\label{s2bdle}
Let $E$ be the total space of an $S^2$-bundle over 
an aspherical closed surface $F$, 
and let $X$ be a $PD_4$-complex  with fundamental group $\pi\cong\pi_1(F)$.
Let $\tau$ be the image of the generator of $H^2(\pi;\mathbb{F}_2)$ 
in $H^2(X;\mathbb{F}_2)$.
Then there is a $2$-connected degree-$1$ map $h:X\to E$ such that $c_E=c_Xh$
if and only if 
\begin{enumerate}
\item $(c_X^*)^{-1}w_1(X)=(c_E^*)^{-1}w_1(E)$; and
\item 
$\xi\cup\tau\not=0$ for some $\xi\in H^2(X;\mathbb{F}_2)$
such that $\xi^2=0$ if $v_2(E)=0$ and $\xi^2\not=0$ if $v_2(E)\not=0$.
\end{enumerate}
\end{theorem}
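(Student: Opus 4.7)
The proof divides into necessity and sufficiency. Write $F=K(\pi,1)$, and let $z\in H^2(E;\mathbb{F}_2)$ denote the fibre class of the $S^2$-bundle $E\to F$.

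\emph{Necessity.} Given a 2-connected degree-1 map $h\colon X\to E$ with $c_Eh\simeq c_X$, I would derive the $w_1$-equality from $h^*w_1(E)=w_1(X)$ together with $c_Eh=c_X$, and take $\xi=h^*z$ as the witness. By Leray--Hirsch $H^*(E;\mathbb{F}_2)$ is free over $c_E^*H^*(\pi;\mathbb{F}_2)$ on $\{1,z\}$, so $z\cup c_E^*\alpha$ generates $H^4(E;\mathbb{F}_2)$ for any nonzero $\alpha\in H^2(\pi;\mathbb{F}_2)$; combining this with the fact that $h^*$ is an isomorphism on $H^4(-;\mathbb{F}_2)$ (degree 1) gives $\xi\cup c_X^*\alpha\ne 0$. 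Because $E$ is strongly minimal with $\pi$ a $PD_2$-group, the lemma at the end of \S6 places $E$ in $v_2$-type II or III, so $v_2(E)=c_E^*V$ for some $V\in H^2(\pi;\mathbb{F}_2)$; the Wu formula then gives $z^2=c_E^*V\cdot z$, which vanishes iff $V=0$ iff $v_2(E)=0$. Pulling back via the $H^4$-isomorphism, $\xi^2=0$ iff $v_2(E)=0$.

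\emph{Sufficiency.} My plan is to build a strongly minimal model $Z$ of $X$ which is homotopy equivalent to $E$ over $F$, and then to take $h$ as the composite $X\to Z\simeq E$. Since $c.d.\pi=2$, Theorem \ref{cd2} gives a projective splitting $\pi_2(X)\cong P\oplus E^2\mathbb{Z}$; Lemma \ref{lambdaE} makes $\lambda_X$ nonsingular on the resulting quotient, and Corollary \ref{existcor} produces a strongly minimal model $Z$ via a 2-connected degree-1 map $g\colon X\to Z$. Theorem 5.10 of \cite{Hi} identifies $Z$ as an $S^2$-bundle over $F$. Different choices of the projective splitting yield (possibly different) minimal models, and the two $S^2$-bundles over $F$ compatible with a given $w_1$ are classified up to homotopy equivalence over $F$ by their $v_2$-type. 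The $w_1$-hypothesis will force $w_1(Z)=w_1(E)$ under the identification of fundamental groups, reducing the proof to matching $v_2$-types.

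The main obstacle is to extract from the $\xi$-hypothesis a choice of splitting that produces $v_2(Z)=v_2(E)$. My plan is to select the projective splitting of $\pi_2(X)$ for which the $E^2\mathbb{Z}$-summand is generated, after Poincar\'e duality, by an integral lift $\tilde\xi\in H^2(X;\mathbb{Z}^u)$ of $\xi$ (with $u$ the $\pi$-action on $\pi_2(E)$ determined by the matching orientation data); the first $\xi$-condition will ensure the summand is non-degenerate under $\lambda_X$, so that this is an admissible splitting for Corollary \ref{existcor}. With this choice $Z$ will have fibre class $z_Z\in H^2(Z;\mathbb{F}_2)$ satisfying $g^*z_Z=\xi$, and the Wu formula in $Z$ gives $z_Z^2=v_2(Z)\cdot z_Z$; pulling back via the $H^4$-isomorphism $g^*$ then yields $\xi^2=0$ iff $v_2(Z)=0$, so the second $\xi$-condition forces $v_2(Z)=v_2(E)$. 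The homotopy equivalence $Z\simeq E$ over $F$ produced by the bundle classification will then give $h$ as the composite. The delicate points in this plan are the existence of the integral lift $\tilde\xi$ (which may require first replacing $\xi$ by an adjusted class with vanishing Bockstein obstruction) and the verification that distinct choices of $\xi$ satisfying the hypothesis give rise to equivalent minimal models; both should be controlled by the structure of $\pi_2(X)$ as an extension of $E^2\mathbb{Z}$ by a projective module.
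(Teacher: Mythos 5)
The paper does not actually prove this statement: its ``proof'' is the single line ``See Theorem 10.17 of the current version of \cite{Hi}'', so your attempt can only be checked against the paper's surrounding machinery. Your necessity direction is correct and essentially complete: the $w_1$-condition follows from $h^*w_1(E)=w_1(X)$ and $c_X=c_Eh$; taking $\xi=h^*z$ works because $h^*$ is an isomorphism on $H^4(-;\mathbb{F}_2)$; and $z^2=v_2(E)\cup z=c_E^*V\cup z$ (legitimate, since $E$ is minimal with $c.d.\pi=2$ and so has $v_2$-type II or III by Theorem \ref{cd2}) correctly ties $\xi^2$ to $v_2(E)$.

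The sufficiency direction has a genuine gap at its central step: the assertion that the projective splitting of $\pi_2(X)$ can be chosen so that the resulting minimal model $Z$ satisfies $g^*z_Z=\xi$. A class $\tilde\xi\in H^2(X;\mathbb{Z}^u)$ does not ``generate'' a $\mathbb{Z}[\pi]$-module complement to $D(E^2\mathbb{Z})$ in $\Pi=\pi_2(X)$, and the integral lift you propose (with its Bockstein difficulty) is a detour belonging to a different route --- it is what the paper's explicit construction via maps to $S^2$ (quoted after the theorem, for the product bundle only) requires, not what the minimal-model route needs. What your argument actually requires is a projective complement $P$ to $D(E^2\mathbb{Z})$ contained in the kernel of the mod-$2$ evaluation $e_\xi:\Pi\to\mathbb{F}_2$ determined by $\xi$ under $H^2(X;\mathbb{F}_2)\to Hom_{\mathbb{Z}[\pi]}(\Pi,\mathbb{F}_2)$. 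This exists for purely mod-$2$ reasons: the hypothesis $\xi\cup c_X^*H^2(\pi;\mathbb{F}_2)\neq0$ forces $e_\xi$ to be nonzero on $D(E^2\mathbb{Z})\cong\mathbb{Z}^u$, so for any initial complement $P_0$ one may lift $e_\xi|_{P_0}$ through the surjection $\mathbb{Z}^u\to\mathbb{F}_2$ (projectivity of $P_0$) to some $\theta:P_0\to D(E^2\mathbb{Z})$ and replace $P_0$ by $\{p-\theta(p)\}$; the hypotheses of Theorem \ref{exist} still hold since $\lambda_X$ on any complement of $E$ is the nonsingular pairing of Lemma \ref{lambdaE}(4). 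With that choice $\xi$ lies in $g^*H^2(Z;\mathbb{F}_2)$ (compare the Cartan--Leray sequences, using $H^3(\pi;\mathbb{F}_2)=0$), is a fibre class of $Z$, and your Wu-class comparison then does force $v_2(Z)=v_2(E)$ and $Z\simeq E$ over $F$. Your final worry --- whether different admissible $\xi$ could yield inequivalent models --- dissolves once you note that when $v_2(X)=0$ or $c_X^*[\Omega]_2$ every admissible $\xi$ has $\xi^2=v_2(X)\cup\xi$ of the same fixed value, while otherwise both squares occur and both bundles are models; this is exactly the content of the ``if and only if''.
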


\begin{proof}
See Theorem 10.17 of the current version of \cite{Hi}.
\end{proof}

This is consistent with Lemma \ref{v2type}, 
for if $v_2(X)=0$ then $\xi^2=0$ and $v_2(E)=0$,
while if $v_2(X)=\tau$ then $\xi^2\not=0$,  
and thus $v_2(E)\not=0$ also.
 
If  $w_1(X)=c_X^*w$, where $w=w_1(\pi)$, 
and $v_2(X)=0$ then $E$ must be $F\times{S^2}$,
and we may construct a degree-1 map as follows. 
Let $\Omega$ generate $H^2(\pi;\mathbb{Z}^w)$
and let $x\in H^2(X;\mathbb{Z})$ be such that $(x\cup{c_X^*\Omega})\cap[X]=1$.
Then $[x]_2^2=0$, since $v_2(X)=0$.
Therefore if $F$ is nonorientable $x^2=0$ in $H^4(X;\mathbb{Z})=Z/2Z$;
if $F$ is orientable then $x^2=2k(x\cup{c_X^*\Omega})$ for some $k$,
and we may replace $x$ by $x'=x-kc_X^*\Omega$ 
to obtain a class with square 0.
Such a class may be realized by a map $d:X\to S^2$,
by Theorem 8.4.11 of \cite{Spn}, 
and we may set $h=(c_X,d):X\to F\times{S^2}$.

If $v_2(X)\not=0$ or $\tau$ then there is a $\xi\in{H^2}(X;\mathbb{F}_2)$ 
such that $\xi\cup\tau\not=0$ but $\xi^2=0$.
There is also a class $\zeta$ such that
$\zeta\cup(\tau-v_2(X))=0$ but $\zeta\cup\tau\not=0$. 
Hence $\zeta^2=\zeta\cup\tau\not=0$.
Thus $X$ has minimal models of each $v_2$-type. 

In particular, if $C$ is a smooth projective complex curve of genus $\geq1$ 
and $X=(C\times{\mathbb{CP}^1})\#\overline{\mathbb{CP}^2}$ 
is a blowup of the ruled surface $C\times{\mathbb{CP}^1}=C\times{S^2}$ 
then each of the two orientable $S^2$-bundles 
over $C$ is a minimal model for $X$.
In this case they are also minimal models in the sense 
of complex surface theory. 
(See Chapter VI of \cite{BPV}.)
Many of the other minimal complex surfaces in the Enriques-Kodaira 
classification are aspherical, and hence strongly minimal in our sense.
However 1-connected complex surfaces are never minimal in our sense, 
since $S^4$ is the unique minimal 1-connected $PD_4$-complex
and $S^4$ has no complex structure, by a classical result of Wu.
(See Proposition IV.7.3 of \cite{BPV}.)

\begin{theorem}
\label{pd2gp}
The homotopy type of a $PD_4$-complex $X$  with fundamental group $\pi$ a $PD_2$-group 
is determined by $\pi$, $w_1(X)$, $\lambda_X$ and the $v_2$-type.
\end{theorem}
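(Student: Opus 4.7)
The plan is to reduce to Theorem \ref{thm06} by finding a common strongly minimal model for $X$ and any comparison complex $Y$ sharing the same $\pi$, $w_1$, $\lambda$ and $v_2$-type. Since $\pi$ is a $PD_2$-group it is torsion-free and has $c.d.\pi=2$, so $\mathrm{Ker}(w)$ is 2-torsion-free, $k_1=0$ for any $PD_4$-complex over $\pi$, and $E^3\mathbb{Z}=0$; the 2-torsion and $k$-invariant hypotheses of Theorem \ref{thm06} therefore hold automatically, via Theorem \ref{cd2} and Lemma \ref{sharp}.

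By Corollary \ref{existcor} and Theorem 5.10 of \cite{Hi}, $X$ admits a strongly minimal model $Z$ which is the total space of one of exactly two $S^2$-bundles over $F=K(\pi,1)$ compatible with $w$, distinguished by whether $v_2(Z)=0$. The central step will be to use Theorem \ref{s2bdle} to determine, from the $v_2$-type of $X$ alone (together with $w$), which of these two bundles occurs as the target of a 2-connected degree-1 map from $X$. Combining Wu's formula $\xi^2=v_2(X)\cup\xi$ with the nondegeneracy of cup product on $H^2(X;\mathbb{F}_2)$, I expect to find that in $v_2$-type II only the trivial bundle admits such a map (since $\xi^2$ vanishes identically); in type III only the twisted bundle admits one, since $H^2(\pi;\mathbb{F}_2)$ is $1$-dimensional so $v_2(X)=c_X^*\tau$ and the trivial-bundle conditions $\xi^2=0$ and $\xi\cup c_X^*\tau\neq 0$ become incompatible; and in type I both bundles occur, by an $\mathbb{F}_2$-dimension count using the distinct hyperplanes $\mathrm{Ann}(v_2(X))$ and $\mathrm{Ann}(c_X^*\tau)$ in $H^2(X;\mathbb{F}_2)$.

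With this classification in hand, any two complexes $X, Y$ of the same $v_2$-type share a common minimal model $Z$---uniquely determined in types II and III, and selected identically in type I. Theorem \ref{thm06} applied to 2-connected degree-1 maps $g_X:X\to Z$ and $g_Y:Y\to Z$, with $\lambda_{g_X}=\lambda_X\cong\lambda_Y=\lambda_{g_Y}$ (the outer equalities holding by strong minimality of $Z$), will then deliver the desired homotopy equivalence $X\simeq Y$. The main obstacle is the Wu-class case analysis identifying the correct bundle in each $v_2$-type; once this is done the reduction to Theorem \ref{thm06} is routine.
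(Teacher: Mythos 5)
Your proposal is correct and follows essentially the same route as the paper: identify which of the two $S^2$-bundle models over $K(\pi,1)$ serves as the strongly minimal model by a Wu-class/cup-product analysis keyed to the $v_2$-type (using Theorem \ref{s2bdle} in type I, where both bundles occur and one must fix a common choice), and then conclude via Theorem \ref{thm06}. The only cosmetic difference is that in types II and III the paper gets uniqueness of the minimal model directly from the fact that $v_2(X)=m\tau$ pushes forward to $v_2(Z)=mc_Z^*[\Omega]_2$ under a $2$-connected degree-$1$ map, whereas you rederive the same dichotomy from the existence criterion of Theorem \ref{s2bdle}.
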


\begin{proof}
Let $v=w_1(\pi)$, $u=w_1(X)+c_X^*v$, 
and  let $\Omega$ generate $H^2(\pi;\mathbb{Z}^v)$. 
Then $[\Omega]_2$ generates $H^2(\pi;\mathbb{F}_2)$, 
and $\tau=c_X^*[\Omega]_2\not=0$.
If $v_2(X)=m\tau$ and ${p:X\to{Z}}$ is a $2$-connected degree-$1$ map 
then $v_2(Z)=mc_Z^*[\Omega]_2$,
and so there is an unique minimal model for $X$.
Otherwise $\tau\not=v_2(X)$, and so there are 
elements $y,z\in{H^2}(X;\mathbb{F}_2)$ such that $y\cup\tau\not=y^2$ and 
$z\cup\tau\not=0$.
If $y\cup\tau=0$ and $z^2\not=0$ then $(y+z)\cup\tau\not=0$ and $(y+z)^2=0$.
Taking $\xi=y,z$ or $y+z$ appropriately, we have $\xi\cup\tau\not=0$ and $\xi^2=0$.
Hence $X$ has a minimal model $Z$ with $v_2(Z)=0$, by Theorem \ref{s2bdle}.
In all cases the theorem now follows from Theorem \ref{thm06}.
\end{proof}

If $Z$ is strongly minimal and $E^2\mathbb{Z}$ is finitely generated but not 0 
then $E^2\mathbb{Z}$ is infinite cyclic \cite{Bo} and the kernel $\kappa$ of 
the natural action of $\pi$ on $\pi_2(Z)\cong\mathbb{Z}$ is a $PD_2$-group, 
by Theorem 10.1 of \cite{Hi}.
Thus $\pi$ is either a $PD_2$-group or a semidirect product $\kappa\rtimes(Z/2Z)$. 
(In particular, $\pi$ has one end).

\section{ cup products}

In Theorem \ref{min} below we shall use a ``cup-product" argument to
relate cohomology in degrees 2 and 4.
Let $G$ be a group and let $\Gamma=\mathbb{Z}[G]$.
Let $C_*$ and $D_*$ be chain complexes of left $\Gamma$-modules
and $\mathcal{A}$ and $\mathcal{B}$ left $\Gamma$-modules.
Using the diagonal homomorphism from $G$ to $G\times{G}$
we may define {\it internal products\/}
\[H^p(Hom_\Gamma(C_*,\mathcal{A}))\otimes
{H^q}(Hom_\Gamma(D_*,\mathcal{B}))\to
{H^{p+q}}(Hom_\Gamma(C_*\otimes{D_*},
\mathcal{A}\otimes\mathcal{B}))\]
where the tensor products of $\Gamma$-modules
taken over $\mathbb{Z}$ have the diagonal $G$-action.
(See Chapter XI.\S4 of \cite{CE}.)
If $C_*$ and $D_*$ are resolutions of $\mathcal{C}$ and $\mathcal{D}$,
respectively, we get pairings
\[ Ext^p_\Gamma(\mathcal{C},\mathcal{A})\otimes
{Ext^q_\Gamma}(\mathcal{D},\mathcal{B})\to
{Ext^{p+q}_\Gamma}
(\mathcal{C}\otimes\mathcal{D},\mathcal{A}\otimes\mathcal{B}).\]
When $\mathcal{A}=\mathcal{B}=\mathcal{D}$, $\mathcal{C}=\mathbb{Z}$
and $q=0$ we get pairings
\[H^p(G;\mathcal{A})\otimes{End}_G(\mathcal{A})\to
{Ext}^p_{\mathbb{Z}[G]}(\mathcal{A},\mathcal{A}\otimes\mathcal{A}).\]
If instead $C_*=D_*=C_*(\widetilde{S})$ for some space $S$ 
with $\pi_1(S)\cong{G}$ composing with an equivariant diagonal approximation 
gives pairings
\[ H^p(S;\mathcal{A})\otimes{H^q}(S;\mathcal{B})\to
{H^{p+q}}(S;\mathcal{A}\otimes\mathcal{B}).\]
These pairings are compatible with the universal coefficient spectral sequences
$Ext^q_\Gamma(H_p(C_*),\mathcal{A})\Rightarrow 
{H^{p+q}}(C^*;\mathcal{A})=H^{p+q}(Hom_\Gamma(C_*,\mathcal{A}))$, etc.
We shall call these pairings ``cup products",
and use the symbol $\cup$ to express their values.

We wish to show that if $\pi$ is a finitely presentable, $2$-dimensional duality 
group  then cup product  with $id_{\Pi}$ gives an isomorphism
\[
c^2_{\pi,w}:H^2(\pi;\Pi)\to{Ext^2_{\mathbb{Z}[\pi]}(\Pi,\Pi\otimes\Pi)}.
\]
The next lemma shows that these groups are isomorphic; 
we state it in greater generality than we need, 
in order to clarify the hypotheses on the group.

\begin{lemma}
\label{cup}
Let $G$ be a group for which the augmentation (left) module $\mathbb{Z}$ 
has a finite projective resolution $P_*$ of length $n$,
and such that $H^j(G;\Gamma)=0$ for $j<n$. 
Let $\mathcal{D}=H^n(G;\Gamma)$, $w:G\to\mathbb{Z}^\times$ be a homomorphism
and $\mathcal{A}$ be a left $\Gamma$-module.
Then there are natural isomorphisms
\begin{enumerate}
\item 
$\alpha_{\mathcal{A}}:\mathcal{D}\otimes_\Gamma\mathcal{A}\to
H^n(G;\mathcal{A})$; and

\item 
$e_{\mathcal{A}}:Ext^n_\Gamma(\overline{\mathcal{D}},\mathcal{A})\to
\mathbb{Z}^w\otimes_\Gamma\mathcal{A}=\mathcal{A}/I_w\mathcal{A}$.
\end{enumerate}
Hence there is an isomorphism $\theta_{\mathcal{A}}=
\alpha_{\mathcal{A}}e_{\overline{\mathcal{D}}\otimes\mathcal{A}}:
Ext^n_\Gamma(\overline{\mathcal{D}},\overline{\mathcal{D}}\otimes\mathcal{A})
\to{H^n(G;\mathcal{A})}$.
\end{lemma}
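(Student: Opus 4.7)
The plan is to exploit the finiteness and vanishing hypotheses to build matched projective resolutions of $\mathbb{Z}$ and $\overline{\mathcal{D}}$ from $P_*$, and then read off both isomorphisms by elementary homological algebra. Set $P^q=Hom_\Gamma(P_q,\Gamma)$, a finitely generated projective right $\Gamma$-module. By hypothesis the cochain complex $P^*$ is concentrated in degrees $0,\dots,n$, with $H^q(P^*)=0$ for $q<n$ and $H^n(P^*)=\mathcal{D}$, so after reindexing $P^*$ is a projective resolution of $\mathcal{D}$ by right $\Gamma$-modules. Applying the involution $\overline{(-)}$ converts this into a projective resolution $Q_*\to\overline{\mathcal{D}}$ of length $n$ by finitely generated projective left $\Gamma$-modules, with $Q_i=P_{n-i}^\dagger$.

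For (1), the standard identification $Hom_\Gamma(P_q,\mathcal{A})\cong P^q\otimes_\Gamma\mathcal{A}$ (valid because each $P_q$ is finitely generated projective) lets us identify the cochain complex $Hom_\Gamma(P_*,\mathcal{A})$ with $P^*\otimes_\Gamma\mathcal{A}$. Then $H^n(G;\mathcal{A})$ is the cokernel of $P^{n-1}\otimes_\Gamma\mathcal{A}\to P^n\otimes_\Gamma\mathcal{A}$, which by right-exactness of tensor product equals $\mathcal{D}\otimes_\Gamma\mathcal{A}$. Naturality in $\mathcal{A}$ is immediate, and we take $\alpha_\mathcal{A}$ to be the resulting isomorphism.

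For (2), I would first establish a natural identification
\[
Hom_\Gamma(P^\dagger,\mathcal{A})\cong\overline{P}\otimes_\Gamma\mathcal{A}
\]
for every finitely generated projective left $\Gamma$-module $P$, where $\overline{P}$ denotes $P$ viewed as a right $\Gamma$-module via $p\cdot g:=\bar g p$. The map $p\otimes a\mapsto(\phi\mapsto\overline{\phi(p)}\,a)$ is easily checked to be $\Gamma$-balanced, and is verified to be an isomorphism first for $P=\Gamma$ (where both sides equal $\mathcal{A}$) and then extended to all finitely generated projectives by additivity and naturality. Under this identification, $Hom_\Gamma(Q_*,\mathcal{A})$ becomes the reindexed complex $\overline{P_*}\otimes_\Gamma\mathcal{A}$, so its cohomology in degree $n$ is the cokernel of $\overline{P_1}\otimes_\Gamma\mathcal{A}\to\overline{P_0}\otimes_\Gamma\mathcal{A}$. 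Since $\overline{P_*}\to\overline{\mathbb{Z}}=\mathbb{Z}^w$ is a projective resolution of right $\Gamma$-modules, right-exactness again yields this cokernel as $\mathbb{Z}^w\otimes_\Gamma\mathcal{A}$, giving $e_\mathcal{A}$.

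For the composite, applying (2) with $\mathcal{A}$ replaced by $\overline{\mathcal{D}}\otimes\mathcal{A}$ and then (1), one only needs the natural identification
\[
\mathbb{Z}^w\otimes_\Gamma(\overline{\mathcal{D}}\otimes\mathcal{A})\cong\mathcal{D}\otimes_\Gamma\mathcal{A},
\]
which follows from a direct comparison of the diagonal $\Gamma$-action on $\overline{\mathcal{D}}\otimes\mathcal{A}$ with the defining tensor relation of $\mathcal{D}\otimes_\Gamma\mathcal{A}$ (using $d\bar g=w(g)(dg^{-1})$ and the balancing $dh\otimes a'=d\otimes ha'$). The main technical obstacle is arranging the variance correctly in the isomorphism $Hom_\Gamma(P^\dagger,\mathcal{A})\cong\overline{P}\otimes_\Gamma\mathcal{A}$, since both the bar operation and dualization swap left and right $\Gamma$-module structures; once that is in place, the remainder of the argument is formal.
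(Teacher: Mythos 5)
Your proposal is correct and follows essentially the same route as the paper: dualize $P_*$ to get a projective resolution of $\mathcal{D}$ (and, after conjugation, of $\overline{\mathcal{D}}$), use the natural isomorphism $Hom_\Gamma(P,\Gamma)\otimes_\Gamma\mathcal{A}\cong Hom_\Gamma(P,\mathcal{A})$ for finitely generated projectives to convert the relevant $Hom$-complexes into tensor complexes, and conclude by right-exactness of the tensor product; the final identification $\mathbb{Z}^w\otimes_\Gamma(\overline{\mathcal{D}}\otimes\mathcal{A})\cong\mathcal{D}\otimes_\Gamma\mathcal{A}$ is exactly how the paper finishes. The only difference is presentational: you spell out the map $p\otimes a\mapsto(\phi\mapsto\overline{\phi(p)}\,a)$ directly, whereas the paper packages the same content as the double-dual identification $Hom_\Gamma(\overline{Q_j},\Gamma)\cong\overline{P_{n-j}}$ composed with the evaluation isomorphism.
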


\begin{proof}
If $P$ is a finitely generated projective left $\Gamma$-module 
then $Q=Hom_\Gamma(P,\Gamma)$ is a finitely generated {\it right\/} module.
There is a natural isomorphism
$P\cong{Hom}_\Gamma(\overline{Q},\Gamma)$, given by 
$p\mapsto(:f\mapsto\overline{f(p)})$, for all $p\in{P}$ and $f\in\overline{Q}$.
There are also bifunctorial natural isomorphisms of abelian groups
$A_{P\mathcal{A}}:Hom_\Gamma(P,\Gamma)\otimes_\Gamma\mathcal{A}\to
{Hom_\Gamma}(P,\mathcal{A})$ 
given by $A_{P\mathcal{A}}(q\otimes_\Gamma{a})(p)=q(p)a$ for all $a\in\mathcal{A}$,
$p\in{P}$ and $q\in{Hom}_\Gamma(P,\Gamma)$.

We may assume that $P_0=\Gamma$.
Let $Q_j=Hom_\Gamma(P_{n-j},\Gamma)$
and $\partial_i^Q=Hom_\Gamma(\partial^P_{n-j},\Gamma)$.
This gives a resolution $Q_*$ for $\mathcal{D}$ with $Q_n=\Gamma$.
The isomorphisms $A_{P_*\mathcal{A}}$ and $A_{\overline{Q_*}\mathcal{A}}$
induce isomorphisms of chain complexes
$Q_*\otimes_\Gamma\mathcal{A}\to{Hom}_\Gamma(P_{n-*},\mathcal{A})$,
and $\overline{P_*}\otimes_\Gamma\mathcal{A}\to
{Hom}_\Gamma(\overline{Q_{n-*}},\mathcal{A})$,
respectively, from which the first two isomorphisms follow. 
The final assertion follows since 
$\mathbb{Z}^w\otimes_\Gamma(\overline{\mathcal{D}}\otimes\mathcal{A})\cong
\mathcal{D}\otimes_\Gamma\mathcal{A}$.
\end{proof}

If $G$ is finitely presentable and $n=2$ then $G$ is a
2-dimensional duality group.
It is not known whether all the groups considered in the lemma
are duality groups.

\begin{lemma}
\label{shap}
If $G$ satisfies the hypotheses of Lemma \ref{cup}
and $H$ is a subgroup of finite index in $G$ 
then cup product with $id_{\overline{\mathcal{D}}}$ 
is an isomorphism for $(G,w)$ if and only if it is so for $(H,w|_H)$.
\end{lemma}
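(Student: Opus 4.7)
The plan is to transfer the cup-product condition between $G$ and its finite-index subgroup $H$ by exploiting that $\mathbb{Z}[G]$ is a finitely generated free $\mathbb{Z}[H]$-module of rank $[G:H]$, so that the resolutions, $Ext$ computations, and internal products underlying Lemma \ref{cup} all behave compatibly under restriction and Shapiro's lemma.

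First I would verify that the hypotheses of Lemma \ref{cup} descend to $H$. A finite projective resolution $P_{*}$ of $\mathbb{Z}$ over $\mathbb{Z}[G]$ of length $n$ is simultaneously a finite projective resolution over $\mathbb{Z}[H]$ of the same length, since $\mathbb{Z}[G]$-projective modules are $\mathbb{Z}[H]$-projective. Using Shapiro's lemma together with the isomorphism $\mathrm{Coind}_H^G\,\mathbb{Z}[H]=Hom_{\mathbb{Z}[H]}(\mathbb{Z}[G],\mathbb{Z}[H])\cong\mathbb{Z}[G]$ of $\mathbb{Z}[G]$-bimodules (valid since $[G:H]<\infty$), we obtain $H^{j}(H;\mathbb{Z}[H])\cong H^{j}(G;\mathbb{Z}[G])$, which vanishes for $j<n$. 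The dualizing module of $H$ is therefore the restriction $\mathcal{D}|_H$, and $w|_H$ plays the role of the orientation character, so the cup product for $(H,w|_H)$ is defined on the same kinds of objects as for $(G,w)$.

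Via the isomorphisms $\theta$ of Lemma \ref{cup}, both cup-product maps become endomorphisms $\phi_G\in End(H^n(G;\overline{\mathcal{D}}))$ and $\phi_H\in End(H^n(H;\overline{\mathcal{D}}|_H))$, so that the question becomes whether $\phi_G$ is bijective if and only if $\phi_H$ is. The cup product, described at the cochain level via a diagonal approximation $P_{*}\to P_{*}\otimes P_{*}$, and the maps $\alpha_{\mathcal{A}}$, $e_{\mathcal{A}}$ entering $\theta$, are all built from the single resolution $P_{*}$ and behave naturally under the change of rings $\mathbb{Z}[G]\to\mathbb{Z}[H]$; consequently the restriction homomorphism intertwines $\phi_G$ and $\phi_H$.

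The main step, and the principal obstacle, is to upgrade this naturality into the desired biconditional, since restriction alone is neither injective nor surjective. The plan is to apply Shapiro's lemma in the form $H^n(H;\overline{\mathcal{D}}|_H)\cong H^n(G;\mathrm{Ind}_H^G\overline{\mathcal{D}}|_H)$ and its analogue for $Ext^n$, and to check at the cochain level that cup product with the identity for $H$ corresponds, under these Shapiro identifications, to cup product with the identity for $G$ applied with coefficient module $\mathrm{Ind}_H^G\overline{\mathcal{D}}|_H$. Because induction from $\mathbb{Z}[H]$ to $\mathbb{Z}[G]$ is faithfully exact and $\overline{\mathcal{D}}$ is a direct summand of $\mathrm{Ind}_H^G\overline{\mathcal{D}}|_H$ as a $\mathbb{Z}[H]$-module, one expects that $\phi_G$ is bijective precisely when $\phi_H$ is. The hard part is carrying out the cochain-level matching through the Hom-tensor adjunction defining Shapiro's lemma, in particular keeping track of the diagonal $G$-action on the relevant tensor products after induction, so that the universal map defined by cupping with $id_{\overline{\mathcal{D}}}$ is seen to be preserved under all these identifications.
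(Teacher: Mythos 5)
Your first step --- that the hypotheses of Lemma \ref{cup} pass to $H$, with dualizing module $\mathcal{D}|_H$, via $\mathrm{Coind}_H^G\mathbb{Z}[H]\cong\mathbb{Z}[G]$ and the observation that a finite projective resolution of $\mathbb{Z}$ over $\mathbb{Z}[G]$ is also one over $\mathbb{Z}[H]$ --- is correct, and it is exactly the paper's first application of Shapiro's Lemma. The difficulty is that the lemma's actual content, the biconditional, is never established: your final paragraph announces a plan (``one expects that $\phi_G$ is bijective precisely when $\phi_H$ is'') and explicitly defers ``the hard part'', namely the cochain-level matching through the Hom--tensor adjunctions. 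That deferred step is not routine bookkeeping; it is the whole lemma.

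Moreover, the mechanism you propose for closing the gap does not work as stated. First, Shapiro's Lemma converts $Ext^n_{\mathbb{Z}[H]}(\overline{\mathcal{D}}|_H,\overline{\mathcal{D}}|_H\otimes\overline{\mathcal{D}}|_H)$ into an $Ext$-group over $\mathbb{Z}[G]$ whose coefficient module is $\mathrm{Coind}_H^G\bigl((\overline{\mathcal{D}}\otimes\overline{\mathcal{D}})|_H\bigr)$, which is not $\mathrm{Coind}_H^G(\overline{\mathcal{D}}|_H)\otimes\mathrm{Coind}_H^G(\overline{\mathcal{D}}|_H)$ (their ranks already disagree); so ``cup product with the identity of the induced module for $G$'' is not the map that corresponds to $c^n_{H,w|_H}$ under your identifications, and bridging this is precisely the unresolved diagonal-action issue you flag. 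Second, the splitting you invoke --- $\overline{\mathcal{D}}$ a direct summand of $\mathrm{Ind}_H^G(\overline{\mathcal{D}}|_H)$ \emph{as a $\mathbb{Z}[H]$-module} --- gives no leverage on the $G$-cohomology and $Ext_{\mathbb{Z}[G]}$ groups on which $\phi_G$ is defined; for that one would need a $\mathbb{Z}[G]$-equivariant splitting, and none exists integrally, since the natural composite $\overline{\mathcal{D}}\to\mathrm{Coind}_H^G(\overline{\mathcal{D}}|_H)\cong\mathrm{Ind}_H^G(\overline{\mathcal{D}}|_H)\to\overline{\mathcal{D}}$ is multiplication by $[G:H]$. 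Restriction/transfer arguments of this shape only yield the equivalence after inverting the index. A route that actually closes the argument, and is what the paper's (admittedly terse) proof is gesturing at, is to use the identifications of Lemma \ref{cup} to express both $c^n_{G,w}$ and $c^n_{H,w|_H}$ in terms of the single resolution $P_*$, its dual $Q_*$ and one chain homotopy equivalence $j_*:\overline{Q}_*\to P_*\otimes\overline{Q}_*$, all of which serve simultaneously for $G$ and for $H$, so that the two cup products are induced by the same chain-level formula $(\xi\otimes\eta)\circ j_n$ and can be compared on the $G$- and $H$-coinvariants of $\overline{\mathcal{D}}\otimes\overline{\mathcal{D}}$ directly. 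As it stands, your argument establishes the setup but not the equivalence.
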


\begin{proof}
If $\mathcal{A}$ is a left $\mathbb{Z}[G]$-module 
then 
$H^n(G;\mathcal{A})\cong{H^n}(H;\mathcal{A}|_H)$,
by Shapiro's Lemma.
Thus if $G$ satisfies the hypotheses of Lemma \ref{cup} 
the corresponding module for $H$ is $\overline{\mathcal{D}}|_H$.
Further applications of Shapiro's Lemma then give the result.
\end{proof}

In particular, it shall suffice to consider the orientable cases.

Let $\eta:Q_0\to\mathcal{D}$ be the canonical epimorphism,
and let $[\xi]\in H^n(G;\overline{\mathcal{D}})$
be the image of $\xi\in{Hom}_\Gamma(P_n,\overline{\mathcal{D}})$.
Then $\xi\otimes\eta:P_n\otimes\overline{Q_0}\to
\overline{\mathcal{D}}\otimes\overline{\mathcal{D}}$ represents
$[\xi]\cup{id_\mathcal{D}}$ in $Ext^n_\Gamma(\overline{\mathcal{D}},
\overline{\mathcal{D}}\otimes\overline{\mathcal{D}})$.
If $\xi=A_{P_n\overline{\mathcal{D}}}(q\otimes_\Gamma\delta)$ then 
$\alpha_{\overline{\mathcal{D}}}(\eta(q)\otimes_\Gamma\delta)=[\xi]$.
There is a chain homotopy equivalence 
$j_*:\overline{Q}_*\to{P_*\otimes\overline{Q}_*}$,
since $P_*$ is a resolution of $\mathbb{Z}$.
Given such a chain homotopy equivalence, 
$e_{\overline{\mathcal{D}}\otimes\overline{\mathcal{D}}}
([\xi]\cup{id_{\overline{\mathcal{D}}}})$ is the image of
$(\xi\otimes\eta)(j_n(1^*))$,
where $1^*$ is the canonical generator of $\overline{Q}_n$, 
defined by $1^*(1)=1$.

\begin{theorem}
\label{cupthm}
Let $G$ be a finitely presentable, $2$-dimensional duality  group,
and let $w:G\to\mathbb{Z}^\times$ be a homomorphism.
Then $c_{G,w}^2$ is an isomorphism.
\end{theorem}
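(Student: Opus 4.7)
The plan is to show that after identifying $\mathrm{Ext}^2_\Gamma(\Pi,\Pi\otimes\Pi)$ with $H^2(G;\Pi)$ via the natural isomorphism $\theta_\Pi$ from Lemma \ref{cup}, the composite $\theta_\Pi\circ c^2_{G,w}$ becomes the identity on $H^2(G;\Pi)$; this immediately forces $c^2_{G,w}$ itself to be an isomorphism.

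First I would reduce to the orientable case. By Lemma \ref{shap}, since $\ker(w)\le G$ has index at most two and is itself a finitely presentable $2$-dimensional duality group, it suffices to verify the claim with $w$ replaced by $w|_{\ker(w)}=1$. A finite presentation of $G$ then yields a finite projective resolution $P_*$ of $\mathbb Z$ of length two over $\Gamma=\mathbb Z[G]$. Setting $Q_j=\mathrm{Hom}_\Gamma(P_{2-j},\Gamma)$ gives a finitely generated projective resolution of $\mathcal{D}=H^2(G;\Gamma)$ by right modules with $Q_2=\Gamma$, and $\overline{Q}_*$ resolves $\Pi=\overline{\mathcal{D}}$ by projective left modules. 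A class $[\xi]\in H^2(G;\Pi)$ is represented by a $\Gamma$-homomorphism $\xi:P_2\to\Pi$, which under the isomorphism $A_{P_2,\Pi}$ corresponds to a finite sum $\sum_i q_i\otimes_\Gamma\delta_i\in Q_0\otimes_\Gamma\Pi$; by the explicit formula recalled just after Lemma \ref{shap}, $[\xi]=\alpha_\Pi\bigl(\sum_i\eta(q_i)\otimes_\Gamma\delta_i\bigr)$.

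Next I would unpack the cup product. By the same recollection, $c^2_{G,w}([\xi])=[\xi]\cup\mathrm{id}_\Pi$ is represented on the tensor-product resolution $P_*\otimes\overline{Q}_*$ of $\Pi$ by the cocycle $\xi\otimes\eta$, supported on the $P_2\otimes\overline{Q}_0$ summand and extended by zero on $P_1\otimes\overline{Q}_1$ and $P_0\otimes\overline{Q}_2$. To compute $\theta_\Pi$ of this class, I would fix a chain homotopy equivalence $j_*:\overline{Q}_*\to P_*\otimes\overline{Q}_*$ covering $\mathrm{id}_\Pi$ on $H_0$; then $\theta_\Pi(c^2_{G,w}([\xi]))$ is the image in $H^2(G;\Pi)$ of $(\xi\otimes\eta)(j_2(1^*))$ traced through $e^{-1}_{\Pi\otimes\Pi}$ and $\alpha_\Pi$ using the canonical isomorphism $\mathbb Z^w\otimes_\Gamma(\Pi\otimes\Pi)\cong\mathcal{D}\otimes_\Gamma\Pi$. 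I would construct $j_*$ inductively through the projective modules $\overline{Q}_0,\overline{Q}_1,\overline{Q}_2=\overline{\Gamma}$, using the naturality of $A_{P_2,-}$ to show that the $P_2\otimes\overline{Q}_0$-component of $j_2(1^*)$ is cohomologous to a tautological element built from a finite $\Gamma$-basis of $P_2$ and its dual $\Gamma$-basis of $Q_0$. Applying $\xi\otimes\eta$ and chasing the definitions then recovers $\sum_i\eta(q_i)\otimes\delta_i$ in $\mathcal{D}\otimes_\Gamma\Pi$, so the composite equals $[\xi]$.

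The principal obstacle is the bookkeeping: pinning down $j_2(1^*)$ up to boundary in the tensor-product complex, and verifying that the contributions from the off-diagonal components $P_1\otimes\overline{Q}_1$ and $P_0\otimes\overline{Q}_2$ of $j_2(1^*)$ drop out against the extension-by-zero of $\xi\otimes\eta$. Once this is carried out, the equality $\theta_\Pi\circ c^2_{G,w}=\mathrm{id}$ gives the isomorphism. Finite presentability enters both to make the dualisations $A_{P_i,\mathcal{A}}$ in Lemma \ref{cup} applicable and to ensure finiteness of the modules involved, while the one-ended, $c.d.\,G=2$ hypotheses are what supply the resolution $Q_*$ of $\mathcal{D}$.
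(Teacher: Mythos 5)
Your overall strategy coincides with the paper's: reduce to $w=1$ via Lemma \ref{shap}, resolve $\mathbb{Z}$ by the complex coming from a finite presentation, dualise to get $Q_*$ resolving $\mathcal{D}$, and compute $\theta\bigl([\xi]\cup\mathrm{id}\bigr)$ by evaluating $\xi\otimes\eta$ on $j_2(1^*)$ for a chain homotopy equivalence $j_*:\overline{Q}_*\to P_*\otimes\overline{Q}_*$. But the step you defer to ``bookkeeping'' is in fact the entire content of the proof, and as stated your plan does not supply it. Knowing that some $j_*$ exists (and that any two choices are chain homotopic, so the induced endomorphism of $H^2(G;\Pi)$ is well defined) tells you nothing about \emph{what} the $P_2\otimes\overline{Q}_0$-component of $j_2(1^*)$ is; ``naturality of $A_{P_2,-}$'' does not by itself show it is cohomologous to the tautological element $-\sum_r p_r^2\otimes q_r^0$. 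The paper has to \emph{exhibit} $j_0,j_1,j_2$ by explicit formulas involving second Fox derivatives, and the verification that $\partial j_2=j_1\partial$ only goes through after first normalising the presentation so that every relator is a product of distinct generators with positive exponents (this is what makes each $\frac{\partial r}{\partial x}$ a single group element, so that $r_x-1$ is an explicit boundary and the degree-$2$ check telescopes to zero). Without that normalisation, or some substitute computation, your claim about $j_2(1^*)$ is an assertion, not a proof.

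Two further points. First, the composite is not the identity: the paper's computation yields $[\xi]\cup\mathrm{id}_{\overline{\mathcal{D}}}=-\theta_{\overline{\mathcal{D}}}(\tau([\xi]))$, where $\tau$ is the involution of $\mathcal{D}\otimes_\Gamma\overline{\mathcal{D}}$ transposing the factors. This is still a bijection, so the conclusion is unaffected, but it shows the identification you assert (``recovers $\sum_i\eta(q_i)\otimes\delta_i$'') is not the one that actually comes out, and is a warning that the sign/ordering bookkeeping cannot be waved through. Second, your opening move of passing to a length-two \emph{projective} resolution discards the explicit free presentation complex on which the Fox-calculus formulas live; when $C(\mathcal{P})$ is not aspherical the Fox--Lyndon sequence has a nonzero projective term $P_3=\pi_2(C(\mathcal{P}))$, and the paper must modify $Q_1$ (adjoining $\mathrm{Hom}_\Gamma(P_3,\Gamma)$ and extending $j_1$ by a splitting) to handle this. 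Your proposal does not address that case at all.
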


\begin{proof}
Note first that $G$ satisfies the hypothesis of Lemma \ref{cup}, with $n=2$.
Let $\mathcal{P}=\langle{X}\mid{R}\rangle^\varphi$ 
be a finite presentation for $G$.
(We shall suppress the defining epimorphism $\varphi:F(X)\to{G}$ 
where possible.)
After introducing new generators $x'$ and relators $x'x$, if necessary,
we may assume that each relator is a product of distinct generators,
with all the exponents  positive.
The new presentation $\mathcal{P}'$ 
has the same deficiency as $\mathcal{P}$.
We may also assume that $w=1$, by Lemma \ref{shap}.

The Fox-Lyndon resolution associated to $\mathcal{P}$ gives an  exact sequence
\[0\to{P_3}=\pi_2(C(\mathcal{P}))\to{P_2}=\Gamma\langle{p_r^2;r\in{R}}\rangle\to
{P_1}=\Gamma\langle{p_x^1;x\in{X}}\rangle\to{P_0}=\Gamma\to\mathbb{Z}\to0\]
in which $\partial{p_x^1}=x-1$ and 
$\partial{p_r^2}=\Sigma_{x\in{X}}r_xp_x^1$,
where $r_{x} =\frac{\partial{r}}{\partial{x}}$, for $r\in{R}$ and $x\in{X}$.
Moreover, $P_3$ is projective and $\partial_3$ is a split monomorphism,
since $c.d.G=2$.

Suppose first that the 2-complex $C(\mathcal{P})$ associated to the presentation is aspherical.
(This assumption is not affected by our normalization of the presentations,
for if $C(\mathcal{P})$ is aspherical then $G$ is efficient, 
and $\chi(C(\mathcal{P}'))=def(\mathcal{P}')=\chi(C(\mathcal{P}))$.
Hence $C(\mathcal{P}')$ is also aspherical, by Theorem 2.8 of \cite{Hi}.)
Then $P_3=0$ and the above sequence is a free resolution of $\mathbb{Z}$.
Let $Q_j=Hom_\Gamma(P_{2-j},\Gamma)$
and $\partial_i^Q=Hom_\Gamma(\partial^P_{2-j},\Gamma)$.
Then $\overline{Q}_0=P_2^\dagger$ and
$\overline{Q}_1=P_1^\dagger$ have dual bases $\{q_x^0\}$ and $\{q_r^1\}$,
respectively.
(Thus $q_x^1(p_y^1)=1$ if $x=y$ and 0 otherwise,
and $q_r^0(p_s^2)=1$ if $r=s$ and 0 otherwise.)
Then $\partial 1^*=\Sigma_{x\in{X}}(x^{-1}-1)q_x^1$
and $\partial{q_x^1}=\Sigma_{r\in{R}}\overline{r_x}q_r^0$.
After our normalization of the presentation, 
each $r_x$ is either $0$ or in ${F(X)}$, for all $r\in{R}$ and $x\in{X}$,
and so  $r_x-1=\partial(\Sigma_{y\in{X}}\frac{\partial{r_x}}{\partial{y}}p_y^1)$.

Define homomorphisms $j_i:\overline{Q}_i\to(P_*\otimes\overline{Q}_*)_i$,
for $i=0,1,2$,
 by setting 
\[j_0(q_r^0)=1\otimes{q_r^0}\quad\mathrm{ for }\quad{r\in{R}},
\]
\[j_1(q_x^1)=1\otimes{q_x^1}-
\Sigma_{r,y}\overline{r_x}
(\frac{\partial{r_x}}{\partial{y}}p_y^1\otimes{q_r^0})
\quad\mathrm{ for }\quad{x\in{X}},\quad\mathrm{and}
\]
\[j_2(1^*)=1\otimes1^*-\Sigma_{x\in{X}}x^{-1}(p_x^1\otimes{q_x^1})-
\Sigma_{r\in{R}}(p_r^2\otimes{q_r^0}).\]
Then
\[
\partial{j_1}(q_x^1)-j_0(\partial{q_x^1})=
\Sigma_{r\in{R}}(1\otimes\overline{r_x}q_r^0)-
\Sigma_{r,y}\overline{r_x}(\frac{\partial{r_x}}{\partial{y}}(y-1)\otimes{q_r^0})-
\Sigma_{r\in{R}}\overline{r_x}(1\otimes{q_r^0})
\]
\[=\Sigma_{r\in{R}}[(1\otimes\overline{r_x}q_r^0)-
\overline{r_x}((r_x-1)\otimes{q_r^0}) -\overline{r_x}(1\otimes{q_r^0)}]=0,
\]
and so $\partial{j_1}=j_0\partial$.
Similarly, 
\[\partial{j_2}(1^*)-j_1(\partial1^*)=
\Sigma_x[1\otimes(x^{-1}-1)q_x^1
-x^{-1}((x-1)\otimes{q_x^1})
+\Sigma_r(x^{-1}(p_x^1\otimes\overline{r_x}q_r^0)-
r_xp_x^1\otimes{q_r^0})]
\]
\[
-\Sigma_x(x^{-1}-1)[1\otimes{q_x^1}-\Sigma_{r,y}\overline{r_x}
(\frac{\partial{r_x}}{\partial{y}}p_y^1\otimes{q_r^0})]
\]
\[=\Sigma_{r,x}
[x^{-1}(p_x^1\otimes\overline{r_x}q_r^0)-r_xp_x^1\otimes{q_r^0}+
\Sigma_y(x^{-1}-1)\overline{r_x}(\frac{\partial{r_x}}{\partial{y}}p_y^1\otimes{q_r^0})].
\]
It shall clearly suffice to show that the summand corresponding to each relator $r$ is 0.
After our normalization of the presentation,
we may assume that $r=x_1\dots{x_m}$ for some distinct $x_1,\dots,x_m\in{X}$.
Let $r_i=r_{x_i}$, for  $1\leq{i}\leq{m}$.
Then $r_i=x_1\dots{x_{i-1}}$, for $1\leq{i}\leq{m}$,
so $r_ix_i=r_{i+1}$ if $i<m$ and $r_mx_m=r=1$ in $G$.
Moreover, $\frac{\partial{r}_i}{\partial{y}}=r_{j}$ if $y=x_j$, for some $1\leq{j}<i$,
and is $0$ otherwise.
Let $S_{i,j}=r_i^{-1}(r_jp_{x_j}^1\otimes{q_r^0})$, for $1\leq{j}\leq{i}\leq{m}$.
Then $x_m^{-1}S_{m,j}=S_{1,j}$, for all $j\leq{m}$,
and so  the summand corresponding to the relator $r$ in
$\partial{j_2}(1^*)-j_1(\partial1^*)$ is
\[
\Sigma_{i\leq{m}}(x_i^{-1}S_{i,i}-S_{1,i}+\Sigma_{j<i}(x_i^{-1}S_{i,j}-S_{i,j}))
\]
\[
=\Sigma_{i<m}(S_{i+1,i}-S_{1,i})
+\Sigma_{i\leq{m}}\Sigma_{j<i}(S_{i+1,j}-S_{i,j})).
\]
This sum collapses to  $0$,  and so $\partial{j_2}=j_1\partial$.
Thus $j_*$ is a chain homomorphism.
Since  $\overline{Q}_*$ and $P_*\otimes\overline{Q}_*$ are resolutions of $\mathbb{Z}$
and $j_*$ induces the identity on $\mathbb{Z}$, it is a chain homotopy equivalence.

We then have
\[
(A_{P_2\overline{\mathcal{D}}}(q_s^0\otimes_\Gamma\delta)\otimes\eta)
(j_*(1^*))=-\Sigma_{r\in{R}}(q_s^0(p_r^2)\delta\otimes_\Gamma\eta(q_r^0)),
\]
which has image $-\delta\otimes_\Gamma\eta(q_s^0)$ in
${\mathcal{D}}\otimes_\Gamma\overline{\mathcal{D}}$.
Therefore 
$[\xi]\cup{id_{\overline{\mathcal{D}}}}=
-\theta_{\overline{\mathcal{D}}}(\tau([\xi]))$ 
for $\xi\in{H^2}(G;\overline{\mathcal{D}})$,
where $\tau$ is the ($\mathbb{Z}$-linear) involution of 
$H^2(G;\overline{\mathcal{D}})$ given by
$\tau(\alpha_{\overline{\mathcal{D}}}(\rho\otimes_\Gamma\alpha))=
\alpha_{\overline{\mathcal{D}}}(\alpha\otimes_\Gamma\rho))$,
and so $c_{G,w}^2$ is an isomorphism.

If $C(\mathcal{P})$ is not aspherical
we modify the definition of the dual complex $Q_*$
by setting $Q_1=Hom_\Gamma(P_1,\Gamma)\oplus{Hom_\Gamma(P_3,\Gamma)}$
and extending the differential by $s^\dagger$,
where $s\partial_3=id_{P_3}$.
Let $f:P_3^\dagger\to\Gamma^s$ be a split monomorphism,
with left inverse $g:\Gamma^s\to{P_3^\dagger}$.
Fix a basis $\{e_1,\dots,e_s\}$ for $\Gamma^s$, and define a
homomorphism $h:\Gamma\to\Gamma\otimes\Gamma^s$ by
$h(e_i)=1\otimes{e_i}$.
Then we may extend $j_1$ by setting $j_1=(1\otimes{g})hf$ on $P_3^\dagger$.
\end{proof}

In \cite{Hi09} we gave closed formulae for $j_2(1^*)$ for some simple 
(un-normalized) presentations of groups of particular interest.
We should have also given the appropriate form of $j_1$ explicitly,
for there we used the relators to simplify the derivatives $r_x$,
which in general are sums of monomials $\Sigma_k\pm{r_{xk}}$,
and such simplifications affect the second derivatives $\frac{\partial{r_{xk}}}{\partial{y}}$.
It is safer to calculate such derivatives in $\mathbb{Z}[F(X)]$ 
{\it before\/} using the relators to simplify their images in $\Gamma$.

Similar formulae show that $c^1_{F,w}$ is an isomorphism
for $F$ free of finite rank $r\geq1$.

\section{orbits of the $k$-invariant}

In this section we shall attempt to extend the
argument sketched in \S15 above for the case of $PD_2$-groups
to other finitely presentable, 2-dimensional duality groups.
The hypothesis on 2-torsion in Theorem \ref{min} below
seems necessary for our argument, 
but does not hold in some cases where the result is
known by other means. 

\begin{lemma}
\label{symmod2}
Let $\pi$ be a finitely presentable group such that $c.d.\pi=2$,
and let $w:\pi\to\mathbb{Z}^\times$ be a homomorphism.
Let $\Pi=E^2\mathbb{Z}$. Then there is an exact sequence 
\[
\Pi\odot_\pi\Pi\to\mathbb{Z}^w\otimes_{\mathbb{Z}[\pi]}\Gamma_W(\Pi)
\to{H^2}(\pi;\mathbb{F}_2)\to0.
\]
If $\Pi\odot_\pi\Pi$ is $2$-torsion free this sequence is short exact.
If, moreover, for every $x\in\Pi$ either $x\in(2,I_w)\Pi$ 
or $x\odot{x}\not\in(2,I_w)(\Pi\odot\Pi)$ 
then $\mathbb{Z}^w\otimes_{\mathbb{Z}[\Pi]}\Gamma_W(\pi)$ is $2$-torsion free.
\end{lemma}

\begin{proof}
Since $\Pi$ is torsion free as an abelian group.
 it is a direct limit of free abelian groups,
and so the natural map from $\Pi\odot\Pi$ to $\Gamma_W(\Pi)$ is injective.
Applying $\mathbb{Z}^w\otimes_{\mathbb{Z}[\pi]}-$ to the exact sequence
\begin{equation*}
\begin{CD}
0\to\Pi\odot\Pi@>s>>\Gamma_W(\Pi)@>q_\Pi>>\Pi/2\Pi\to0.
\end{CD}
\end{equation*}
gives the above sequence,
since $\mathbb{Z}^w\otimes_{\mathbb{Z}[\pi]}\Pi/2\Pi\cong\Pi/(2,I_w)\Pi\cong
{H^2}(\pi;\mathbb{F}_2)$.
The kernel on the left in this sequence
is the image of the 2-torsion group
$Tor_1^{\mathbb{Z}[\pi]}(\mathbb{Z}^w,\Pi/2\Pi)$.

If $\Pi\odot_\pi\Pi$ is $2$-torsion free this sequence is short exact, 
and nontrivial 2-torsion 
in $\mathbb{Z}^w\otimes_{\mathbb{Z}[\pi]}\Gamma_W(\Pi)$ 
has nontrivial image in  $\Pi/(2,I_w)\Pi$.
If there is such torsion there are $x, y_i,z_i\in\Pi$ such that
$x\not\in(2,I_w)\Pi$ but $2[\gamma_\Pi(x)+s(\Sigma{y_i}\odot{z_i})]=0$ in $\Pi\odot_\pi\Pi$.
Since $2\gamma_\Pi(x) =s(x\odot{x})$ in $\Gamma_W(\Pi)$, 
we then have 
$s(x\odot{x})\equiv2(-s(\Sigma{y_i}\odot{z_i}))$ {\it mod} $I_w(\Pi\odot\Pi)$,
and so $x\odot{x}\in(2,I_w)(\Pi\odot\Pi)$.
\end{proof}

The final condition in the lemma depends only on the image of $x$ in $\Pi/(2,I_w)\Pi$.

Let $X$ be a $PD_4$-complex with $\pi_1(X)=\pi$ and $\pi_2(X)=\Pi$,
and let $L=L_\pi(\Pi,2)$. 
Then $\widetilde{L}\simeq{K(\Pi,2)}$, and so
it follows from the Whitehead sequence that 
$H_3(\widetilde{L};\mathbb{Z})=0$ and 
$H_4(\widetilde{L};\mathbb{Z})\cong\Gamma_W(\Pi)$.
Let $\mathcal{A}$ be a left $\mathbb{Z}[\pi]$-module.
Since $\pi$ is a 2-dimensional duality group with dualizing module $\overline\Pi$,
Lemma 41 gives canonical isomorphisms 
\[H^2(\pi;\mathcal{A})=Ext^2_{\mathbb{Z}[\pi]}(\mathbb{Z},\mathcal{A})\cong 
\overline\Pi\otimes_{\mathbb{Z}[\pi]}\mathcal{A}\quad\mathrm{and}\quad
{Ext^2_{\mathbb{Z}[\pi]}(\Pi,\mathcal{A})=\mathbb{Z}^w\otimes_{\mathbb{Z}[\pi]}\mathcal{A}}.
\]
The spectral sequence for the universal covering 
$p_L:\widetilde{L}\to{L}$ gives exact sequences
\[
0\to{H^2(\pi;\mathcal{A})}\to{H^2(L;\mathcal{A})}\to
{Hom_{\mathbb{Z}[\pi]}(\Pi,\mathcal{A})=
H^0(\pi;H^2(\widetilde{L};\mathcal{A}))
}\to0
\]
(split by the homomorphism $H^2(\sigma;\mathcal{A})$ induced by a section
$\sigma$ for $c_L$), and 
\begin{equation*}
\begin{CD}
0\to\mathbb{Z}^w\otimes_{\mathbb{Z}[\pi]}\mathcal{A}\to{H^4(L;\mathcal{A})}\!
@>p_L^*>>\!Hom_{\mathbb{Z}[\pi]}(\Gamma_W(\Pi),\mathcal{A})
=H^0(\pi;H^4(\widetilde{L};\mathcal{A}))\to0,
\end{CD}
\end{equation*}
since $c.d.\pi\leq2$.
The right hand homomorphisms are induced by $p_L$, in each case.
The spectral sequence for the universal covering $p_X:\widetilde{X}\to{X}$ 
gives isomorphisms 
$Ext^2_{\mathbb{Z}[\pi]}(\Pi,\mathcal{A}))\cong{H^4(X;\mathcal{A})}$,
and so $f_{X,2}$ induces (non-canonical?) splittings
of the second of these sequences.

In the next theorem and subsequent comments
$p_L^*$ is used variously for homomorphisms determined by
$H^4(p_L;\Gamma_W(\Pi))$, $H^2(p_L;\Pi)$ and $H^4(p_L;\Pi/2\Pi)$.

\begin{theorem}
\label{min}
Let $\pi$ be a finitely presentable, $2$-dimensional duality group,
and let $w:\pi\to\mathbb{Z}^\times$ be a homomorphism.
Let $\Pi=E^2\mathbb{Z}$.
Assume that the image of $\Pi\odot_\pi\Pi$ in ${\mathbb{Z}^w\otimes_{\mathbb{Z}[\pi]}\Gamma_W(\Pi)}$ 
is $2$-torsion free.
Then the homotopy type of a minimal $PD_4$-complex $Z$
with $(\pi_1(Z),w_1(Z))\cong(\pi,w)$ is determined by its refined $v_2$-type.
\end{theorem}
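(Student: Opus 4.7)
By the corollary following Theorem~\ref{cd2}, the homotopy type of a minimal $PD_4$-complex $X$ with given $(\pi,w)$ is determined by $\pi$, $w$ and the orbit of $k_2(X)\in H^4(L;\Gamma_W(\Pi))$ under the combined actions of $E_0(L)=E_\pi(L)\rtimes Aut(\pi)$ and $Aut_\pi(\Gamma_W(\Pi))$, where $L=L_\pi(\Pi,2)$. The plan is to show that this orbit is completely recorded by the refined $v_2$-type. Starting from the short exact sequence displayed in \S8,
$$0\to \mathbb{Z}^w\otimes_{\mathbb{Z}[\pi]}\Gamma_W(\Pi)\to H^4(L;\Gamma_W(\Pi))\xrightarrow{p_L^*} Hom_{\mathbb{Z}[\pi]}(\Gamma_W(\Pi),\Gamma_W(\Pi))\to 0,$$
Theorem~\ref{Postcd2} forces $p_L^*k_2(X)$ to be an isomorphism; applying an element of $Aut(\Pi)\subset E_\pi(L)$ and possibly reversing orientation normalises $p_L^*k_2(X)=p_L^*k_2(Y)=id$, so that $k_2(X)-k_2(Y)\in\mathbb{Z}^w\otimes_{\mathbb{Z}[\pi]}\Gamma_W(\Pi)$. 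By Lemma~\ref{symmod2} and the 2-torsion-free hypothesis this group fits into
$$0\to \Pi\odot_\pi\Pi\to \mathbb{Z}^w\otimes_{\mathbb{Z}[\pi]}\Gamma_W(\Pi)\to H^2(\pi;\mathbb{F}_2)\to 0.$$

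The proof then reduces to two identifications. \emph{(a) The image of $k_2(X)$ in $H^2(\pi;\mathbb{F}_2)$ equals the class $V_X$ with $c_X^*V_X=v_2(X)$.} I would obtain this by pulling the Wu relation $v_2(X)\cup\xi=\xi^2$ back along $f_{X,2}$, realising $k_2(X)$ through the secondary boundary $b_L$ using Lemma~\ref{2dary} applied to $f_{X*}[X]\in H_4(L;\mathbb{Z}^w)$, and reducing modulo~$2$ so that the symmetric-square contribution matches the cokernel $H^2(\pi;\mathbb{F}_2)$. \emph{(b) The subgroup $H^2(\pi;\Pi)\subset E_\pi(L)$ acts on $k_2(X)$ by exactly the translations in $\Pi\odot_\pi\Pi$.} For $\phi\in H^2(\pi;\Pi)$ the self-equivalence $h_\phi$ satisfies $h_\phi^*\iota_{\Pi,2}=\iota_{\Pi,2}+c_L^*\phi$, so bilinearity of the Whitehead product $\Pi\otimes\Pi\to\Gamma_W(\Pi)$ makes $h_\phi^*k_2(X)-k_2(X)$ equal to the symmetrisation of a cup product $\phi\cup id_\Pi$ (the term $\phi\cup\phi$ vanishes since $c.d.\pi=2$ kills $H^4(\pi;-)$). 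Theorem~\ref{cupthm} identifies this pairing with an isomorphism $H^2(\pi;\Pi)\cong Ext^2_{\mathbb{Z}[\pi]}(\Pi,\Pi\otimes\Pi)\cong\mathbb{Z}^w\otimes_{\mathbb{Z}[\pi]}(\Pi\otimes\Pi)$ via Lemma~\ref{cup}, and composition with $\Pi\otimes\Pi\to\Pi\odot\Pi\to\Gamma_W(\Pi)$ pins the image down as $\Pi\odot_\pi\Pi$; Lemma~\ref{alphatheta} supplies the explicit formula for the effect on $\Gamma_W$-classes.

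With (a) and (b) in hand, if $X$ and $Y$ share the refined $v_2$-type then an $Aut(\pi)$-equivalence aligns $V_X=V_Y$ in $H^2(\pi;\mathbb{F}_2)$; (a) places $k_2(X)-k_2(Y)$ in $\Pi\odot_\pi\Pi$; and (b) yields a $\phi\in H^2(\pi;\Pi)$ whose $h_\phi$ carries $k_2(X)$ onto $k_2(Y)$. The conclusion then follows from the corollary to Theorem~\ref{cd2}. The main obstacle is identification (a): tracing $f_{X*}[X]$ through the spectral-sequence description of $H^4(L;\Gamma_W(\Pi))$ and through the secondary boundary $b_L$, and matching its mod-$2$ quadratic component with the Wu class, is the delicate point and must mirror (in a considerably more algebraic setting) the direct calculation carried out for $PD_2$-groups in \S15. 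The 2-torsion-free hypothesis is precisely what keeps the symmetric-square sequence short exact, so that $V_X$ is unambiguously defined modulo $\Pi\odot_\pi\Pi$ and no 2-primary ambiguity escapes the calculation.
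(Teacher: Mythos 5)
Your overall architecture matches the paper's: reduce to the orbit of $k=k_2(X)$, normalise $p_L^*k=id_{\Gamma_W(\Pi)}$, analyse the action of $H^2(\pi;\Pi)\leq E_\pi(L)$ via cup products and Theorem \ref{cupthm}, and pass to $H^2(\pi;\mathbb{F}_2)$ through Lemma \ref{symmod2}. But there are two genuine gaps, one of which concerns the single place where the stated hypothesis is actually used.

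In step (b) you assert that ``bilinearity of the Whitehead product'' makes $h_\phi^*k-k$ equal to $\phi\cup id_\Pi$. This glosses over the central difficulty: $k$ itself is not a cup product. What one can compute directly is the effect of $h_\phi$ on $\Xi=\lambda\cup\lambda$ (where $p_L^*\lambda=id_\Pi$), and $\Xi$ restricts to $2\,id_{\Gamma_W(\Pi)}$ on the fibre, not to $id_{\Gamma_W(\Pi)}$ — the quadratic functor $\Gamma_W$ is not the symmetric square. The paper's argument therefore only obtains $p_L^*(2k-\Xi)=0$, hence $f_\phi(2k-\Xi)=0$, hence $2\bigl(f_\phi(k)-c^2_{\pi,w}(\phi)\bigr)=0$; it is precisely here that the hypothesis that $\mathbb{Z}^w\otimes_{\mathbb{Z}[\pi]}\Gamma_W(\Pi)$ be $2$-torsion-free is invoked, to cancel the factor of $2$ and conclude $f_\phi(k)=c^2_{\pi,w}(\phi)$. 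You instead locate the role of the $2$-torsion hypothesis entirely in the short-exactness of the sequence of Lemma \ref{symmod2} (which in any case is governed by $2$-torsion in $\Pi\odot_\pi\Pi$, a different condition, and whose short-exactness the proof does not need — only exactness at the middle and right). Without the division-by-$2$ step your argument shows only that $f_\phi(k)-c^2_{\pi,w}(\phi)$ is $2$-torsion, which does not pin down the sweep of the $H^2(\pi;\Pi)$-action to the coset of $\Pi\odot_\pi\Pi$. Relatedly, Lemma \ref{alphatheta} is not the right tool here: it describes $\Gamma_W$ of automorphisms $\alpha_\theta$ of a direct sum $M\oplus E$ (used in Theorem \ref{thm06}), whereas the maps $h_\phi$ act trivially on $\pi_2(L)$ and their effect on $H^4(L;\Gamma_W(\Pi))$ must be extracted from the cup-product identity $f_\phi(\xi\cup\xi')=\delta_\phi(p_L^*\xi')\cup\xi+\delta_\phi(p_L^*\xi)\cup\xi'$.

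Your step (a) — a direct identification of the class of $q_{\Pi\#}(k)$ in the $H^2(\pi;\mathbb{F}_2)$-torsor $(p_L^*)^{-1}(q_\Pi)$ with the Wu class — is left as a sketch, and you rightly flag it as the delicate point; note that $(p_L^*)^{-1}(q_\Pi)$ is a coset, not a group, so ``the image of $k$ in $H^2(\pi;\mathbb{F}_2)$'' needs a basepoint before it means anything. The paper sidesteps this entirely: it uses the realisation statement of Theorem \ref{cd2} (every $V\in H^2(\pi;\mathbb{F}_2)$ arises as $v_2$ of a minimal complex) to exhibit an $Aut(\pi)$-equivariant bijection $k\mapsto v_2(X)$ from $(p_L^*)^{-1}(q_\Pi)$ to $H^2(\pi;\mathbb{F}_2)$, rather than computing that bijection cohomologically. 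If you want to keep your more direct route for (a), you would need to carry out the $b_L$/mod-$2$ computation in full; as written it is a plan, not a proof.
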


\begin{proof}
Let $Z$ be a minimal $PD_4$-complex with $\pi_1(Z)\cong\pi$ 
and $w_1(Z)=c_Z^*w$.
Then $\pi_2(Z)\cong\Pi$ and $\pi_3(Z)\cong\Gamma_W(\Pi)$, since $\pi$ has one end, 
and the homotopy type of $Z$ is determined by $k=k_2(Z)\in{H^4(L;\Gamma_W(\Pi))}$,
where $\Pi=E^2\mathbb{Z}$ and $L=P_2(Z)=L_\pi(\Pi)$.
This class is only well defined up to the actions of $Aut(\Gamma_W(\Pi))$ 
and $E_0(L)$.
Since $p_L^*k=k_2(\widetilde{Z})$ is an automorphism
(considered as an endomorphism of $\Gamma_W(\Pi)$), 
by part (1) of Theorem \ref{Postcd2}, 
we may assume that $p_L^*k=id_{\Gamma_W(\Pi)}$,
after applying an automorphism of $\Gamma_W(\Pi)$.
Now $E_0(L)\cong{E_\pi(L)\rtimes{Aut(\pi)}}$
and $E_\pi(L)\cong{H^2(\pi;\Pi)\rtimes{Aut(\Pi)}}$.
(See \S3 above).
We shall consider the action of $Aut(\pi)$ in the final paragraph of the proof.
Since  $Aut(\Pi)=\{\pm1\}$ acts trivially on $\Gamma_W(\Pi)$,
the main task is to consider the action of $H^2(\pi;\Pi)$ on $k$.
We shall show that this action is closely related to 
the cup product homomorphism $c^2_{\pi,w}$.
Note also that since $Z$ is minimal,  $v_2(Z)=c_Z^*v$
for some $v\in{H}^2(\pi;\mathbb{F}_2)$, by Theorem \ref{cd2min},
and $E_\pi(L)$ fixes classes induced from $K=K(\pi,1)$, such as $c_L^*v$.

Let $\phi\in{H^2}(\pi;\Pi)$ and let $s_\phi\in[K,L]_K$ and $h_\phi\in[L,L]_K$ 
be as defined in Lemma \ref{tsuki}.
Let $M=L_\pi(\Pi,3)$.
Then $[M,M]_K=H^3(M;\Pi)\cong{End(\Pi)}$, since $c.d.\pi=2$.
Let $\overline\Omega:{[M,M]_K}\to[L,L]_K$ be the loop map.
Let $g\in[M,M]_K$ have image $[g]=\pi_3(g)\in{End(\Pi)}$
and let $f=\overline\Omega{g}$.
Then $\omega([g])=f^*\iota_{\Pi,2}$ defines a homomorphism 
$\omega:End(\Pi)\to{H^2}(L;\Pi)$ such that $p_L^*\omega([g])=[g]$ 
for all $[g]\in{End(\Pi)}$.
Moreover $f\mu=\mu(f,f)$,
since $f=\overline\Omega{g}$, and so $fh_\phi=\mu(fs_\phi{c_L},f)$.
Hence 
\[h_\phi^*\xi=\xi+c_L^*s_\phi^*\xi
\]
 for $\xi=\omega([g])=f^*\iota_{\Pi,2}$.
Naturality of the isomorphisms 
$H^2(X;\mathcal{A})\cong[X,L_\pi(\mathcal{A},2)]_K$ for $X$ a space over $K$
and $\mathcal{A}$ a left $\mathbb{Z}[\pi]$-module implies that
\[s_\phi^*\omega([g])=[g]_\#s_\phi^*\iota_{\Pi,2}=[g]_\#\phi\]
for all $\phi\in{H^2}(\pi;\Pi)$ and $g\in[M,M]_K$.
(See Chapter 5.\S4 of \cite{Ba0}.)

Using our present hypotheses, the exact sequences above give  sequences
\begin{equation}
\begin{CD}
0\to{H^2(\pi;\Pi)}@>c_L^*>>{H^2(L;\Pi)}@>p_L^*>>{End(\Pi)}\to0
\end{CD}
\end{equation}
(split by $\omega$ and the homomorphism $H^2(\sigma;\Pi)$ induced by 
a section $\sigma$ for $c_L$), 
and 
\begin{equation}
\begin{CD}
0\to\mathbb{Z}^w\otimes_{\mathbb{Z}[\pi]}\Gamma_W(\Pi)
\to{H^4(L;\Gamma_W(\Pi))}@>p_L^*>>
End(\Gamma_W(\Pi))\to0.
\end{CD}
\end{equation}
We shall identify the modules on the left with their images,
to simplify the notation.

If $u\in{H^2}(\pi;\Pi)$ then $h_\phi^*(u)=u$,
since $c_Lh_\phi=c_L$.
The induced automorphism of the quotient $End(\Pi)=H^0(\pi;(H^2(\widetilde{L};\Pi))$
is also the identity, 
since the lifts of $h_\phi$ are (non-equivariantly) homotopic 
to the identity in $\widetilde{L}$.
Hence there is a homomorphism 
\[
\delta_\phi:End(\Pi)\to{H^2}(\pi;\Pi)
\]
such that $h_\phi^*(\xi)=\xi+c_L^*\delta_\phi(p_L^*\xi)$ for all $\xi\in{H^2}(L;\Pi)$.
Since $p_L^*c_L^*=0$ and $h_{\phi+\psi}=h_\phi{h_\psi}$ 
it follows that $\delta_\phi$ is additive as a function of $\phi$.
Since $\pi$ is a 2-dimensional duality group,
${H^2(\pi;\Pi)}\cong\overline{\Pi}\otimes_{\mathbb{Z}[\pi]}\Pi$, 
and so 
$\phi=\rho\otimes_\pi\alpha$ for some $\rho\in\overline{\Pi}$ and $\alpha\in\Pi$. 
If $g\in[M,M]_K$ then 
\begin{equation}
\delta_\phi([g])=\delta_\phi(p_L^*\omega([g]))=s_\phi^*\omega([g])=\rho\otimes_\pi[g](\alpha).
\end{equation}
In particular, $\delta_\phi(id_\Pi)=\phi$.

Similarly, the automorphism of $H^4(L;\Gamma_W(\Pi))$ induced by $h_\phi$ 
fixes the subgroup $G=\mathbb{Z}^w\otimes_{\mathbb{Z}[\pi]}\Gamma_W(\Pi)$,
and induces the identity on the quotient $End(\Gamma_W(\Pi))=H^0(\pi;H^4(\widetilde{L};\Gamma_W(\Pi)))$.
Then there is a homomorphism 
\[
f_\phi:H^4(L;\Gamma_W(\Pi))\to{G}
\]
such that $h_\phi^*(u)=u+f_\phi(u)$
for all $u\in{H^4(L;\Gamma_W(\Pi))}$, and such that ${f_\phi|_G=0}$.
Moreover, $f_\phi$ is additive as a function of $\phi$,
so we may define $\widehat{f}:H^2(\pi;\Pi)\to{G}$ by
\[
\widehat{f}(\phi)=f_\phi(k),\quad\mathrm{for~all}~\phi\in{H^2(\pi;\Pi)}.
\]

When $S=L$, $\mathcal{A}=\mathcal{B}=\Pi$, and $p=q=2$ 
the construction of \S15 gives a cup product pairing of $H^2(L;\Pi)$ 
with itself with values in ${H^4(L;\Pi\otimes\Pi)}$.
Since $c.d.\pi=2$ this pairing is trivial on the image 
of $H^2(\pi;\Pi)\otimes{H^2(\pi;\Pi)}$.
The maps $c_L$ and $\sigma$ induce a splitting
$H^2(L;\Pi)\cong{H^2(\pi;\Pi)}\oplus{End(\Pi)}$,
and this pairing restricts to the cup product pairing of 
$H^2(\pi;\Pi)$ with $End(\Pi)$ with values in 
${Ext^2_{\mathbb{Z}[\pi]}}(\Pi,\Pi\otimes\Pi)$.
We may also compose with the natural homomorphisms from 
$\Pi\otimes\Pi$ to $\Pi\odot\Pi$ and $\Gamma_W(\Pi)$ to get pairings
with values in $H^4(L;\Pi\odot\Pi)$ and $H^4(L;\Gamma_W(\Pi))$.

Since $h_\phi^*(\xi\cup\xi')=h_\phi^*\xi\cup{h_\phi^*}\xi'$
we have also
\begin{equation}
f_\phi(\xi\cup\xi')=
\delta_\phi(p_L^*\xi')\cup\xi+\delta_\phi(p_L^*\xi)\cup\xi',
\end{equation}
for all $\xi,\xi'\in{H^2}(L;\Pi)$.
On passing to $\widetilde{L}\simeq{K(\Pi,2)}$ we find that
\begin{equation}
p_L^*(\xi\cup\xi')(\gamma_\Pi(x))=p_L^*\xi(x)\odot{p_L^*}\xi'(x),
\end{equation}
for all $\xi,\xi'\in{H^2(L;\Pi)}$ and $x\in\Pi$.
(To see this, note that the inclusion of $x$ determines 
a map from $\mathbb{CP}^\infty$ to $K(\Pi,2)$,
since $[\mathbb{CP}^\infty,K(\Pi,2)]=Hom(\mathbb{Z},\Pi)$.
Hence we may use naturality of cup products to
reduce to the case when $K(\Pi,2)=\mathbb{CP}^\infty$
and $x$ is a generator of $\Pi=\mathbb{Z}$.)

Let $P$ be the image of $\Pi\odot_\pi\Pi$ in $G$.
Since $c^2_{\pi,w}:H^2(\pi;\Pi)\to
{Ext^2_{\mathbb{Z}[\pi]}}(\Pi,\Pi\otimes\Pi)$
is an isomorphism,
by Theorem \ref{cupthm}, 
the induced map $\widehat{c}:H^2(\pi;\Pi)\to{P}$ is an epimorphism.
Let $e=\widehat{f}-\widehat{c}$.

If $\Xi=\lambda\cup\lambda$ with $p_L^*\lambda=id_\Pi$ then
$p_L^*(\Xi)(\gamma_\Pi(x))=x\odot{x}=2\gamma_\Pi(x)$, for all $x\in\Pi$,
by Equation (5).
Similarly,
$f_\phi(\Xi)=2(\phi\cup\lambda)=2\phi\cup{id_\Pi}$,
by Equation (4) and by the triviality of the cup product on the image 
of $H^2(\pi;\Pi)\otimes{H^2(\pi;\Pi)}$.
Hence
\[
p_L^*(\Xi)=2id_{\Gamma_W(\Pi)}\quad\mathrm{ and}\quad 
f_\phi(\Xi)=2\,\widehat{c}(\phi).
\]
Since $p_L^*k=id_{\Gamma_W(\Pi)}$, we have $p_L^*(2k-\Xi)=0$,
and so $2k-\Xi\in{G}$,
by the exactness of sequence (2) above.
Then 
\[2e(\phi)=f_\phi(2k-\Xi)=0,
\]
since $f_\phi|_G=0$.
Hence $e$ has image in the 2-torsion subgroup $_2G$.

We invoke the hypothesis on 2-torsion at this point.
Since $P\cap_2G=0$,
it follows easily that $|\mathrm {Cok}({\widehat{f}}\,)|\leq|G/P|=|H^2(\pi;\mathbb{F}_2)|$.
As $\phi$ varies in $H^2(\pi;\Pi)$ the values of $h_\phi(k)$ sweep out
a coset of $\mathrm{Im}({\widehat{f}}\,)$ in
$(p_L^*)^{-1}(id_{\Gamma_W(\Pi)})=k+G$,
and there are at most $2^\beta$ cosets, 
where $\beta=\beta_2(\pi;\mathbb{F}_2)$.

For each $v\in{H^2(\pi;\mathbb{F}_2)}$ there is a minimal $PD_4$-complex $Z$
such that $v_2(Z)=c_Z^*v$, by Theorem \ref{cd2}.
The group $Aut(\pi)$ acts on $K$ and $L$ through based self-homotopy equivalences,
and hence acts on the classifying maps $c_Z$ and $f_{Z,2}$ by composition.
These actions induce actions on $H^2(\pi;\mathbb{F}_2)$ and $\Pi$,
and hence on $H^4(L;\Gamma_W(\Pi))$.
The association $k\mapsto{v_2(Z)}$ defines a $Aut(\pi)$-equivariant surjection 
from $(p_L^*)^{-1}(id_{\Gamma_W(\Pi)})=k+G$ to $H^2(\pi;\mathbb{F}_2)$,
which  is constant on cosets of $\mathrm{Im}({\widehat{f}}\,)$,
since $E_\pi(L)$ acts trivially on $H^2(\pi;\mathbb{F}_2)$.
It follows that the refined $v_2$-type is a complete invariant 
for the homotopy types of such complexes.
\end{proof}

If $\mathbb{Z}^w\otimes_{\mathbb{Z}[\pi]}\Gamma_W(\Pi)$
is $2$-torsion free then $\widehat{f}=\widehat{c}$
(since $e=0$),
and the argument can be simplified slightly.

The hypothesis on $2$-torsion holds if $\pi$ is a $PD_2$-group,
for then $\mathbb{Z}^w\otimes_{\mathbb{Z}[\pi]}\Gamma_W(\Pi)\cong\mathbb{Z}$
if $w=1$ and has order 2 otherwise.
(Note that in this case $\Pi\cong\mathbb{Z}^u$, where $u=w+w_1(\pi)$.
We do not assume here that $w=w_1(\pi)$!)
It holds also if $\pi=Z*_m$ with $|m|>1$,
by Theorem \ref{BStf} below.
On the other hand, if $\pi=F(r)\times\mathbb{Z}$ and $w(t)=-1$,
where $t\in\pi$ generates the central $\mathbb{Z}$ factor,
then $\Pi\odot_\pi\Pi$ and $\mathbb{Z}^w\otimes_{\mathbb{Z}[\pi]}\Gamma_W(\Pi)$
have exponent 2, since $t$ acts through $\pm1$ on $\Pi$.
If $r>1$ these groups are not finitely generated, and so the
hypothesis of Theorem \ref{min} does not hold.

\begin{cor}
\label{beta=0}
If $H^2(\pi;\mathbb{F}_2)=0$ and $\Pi\odot_\pi\Pi$ is $2$-torsion free 
there is an unique minimal $PD_4$-complex realizing $(\pi,w)$.
\qed
\end{cor}

Hence two $PD_4$-complexes $X$ and $Y$ with fundamental group $\pi$
are homotopy equivalent if and only if $\lambda_X\cong\lambda_Y$
(i.e., there is an isomorphism $\theta:\pi_1(X)\cong\pi_1(Y)$ such that 
$w_1(X)=w_1(Y)\circ\theta$ and an isometry of the pairings, up to sign.)

The hypothesis $H^2(\pi;\mathbb{F}_2)=0$ holds if $\pi$ is the group 
of a link of 2-spheres in an homology 4-sphere, in particular, 
if it is a 2-knot group or is the fundamental group of an homology 4-sphere.

\begin{cor}
\label{beta=1}
If $H^2(\pi;\mathbb{F}_2)=\mathbb{F}_2$ and the image of $\Pi\odot_\pi\Pi$
in $\mathbb{Z}^w\otimes_{\mathbb{Z}[\pi]}\Gamma_W(\Pi)$ is $2$-torsion free
there are two minimal $PD_4$-complexes realizing $(\pi,w)$,
distinguished by whether $v_2(X)=0$ or not.
\qed
\end{cor}

The work of \cite{HKT} suggests that the refined $v_2$-type 
should be a complete homotopy invariant, 
without the technical hypothesis on 2-torsion 
or the restriction that $\pi$ have one end.
If, moreover, $g.d.\pi=2$ then every such minimal $PD_4$-complex should be
homotopy equivalent to a closed 4-manifold, by Theorem \ref{cd2}.
This is so if $\pi$ is a semidirect product $F(r)\rtimes\mathbb{Z}$
or a $PD_2$-group, by Theorems \ref{sdp} and \ref{pd2gp}.
Can the connection between $k_2$ and $v_2$ be made more explicit?
The canonical epimorphism $q_\Pi:\Gamma_W(\Pi)\to\Pi/2\Pi$ 
determines a change of coefficients homomorphism $q_{\Pi\#}$ 
from sequence (2) above to the parallel sequence
\begin{equation*}
\begin{CD}
0\to{H^2}(\pi;\mathbb{F}_2)
\to{H^4(L;\Pi/2\Pi)}@>p_L^*>>
Hom_{\mathbb{Z}[\pi]}(\Gamma_W(\Pi),\Pi/2\Pi)\to0.
\end{CD}
\end{equation*}
Thus $q_{\Pi\#}(k_2(Z))$ lies in the $H^2(\pi;\mathbb{F}_2)$-coset $(p_L^*)^{-1}(q_\Pi)$.

Does Theorem \ref{s2bdle} have an analogue for other 2-dimensional duality groups?
Let $X$ and $Z$ be $PD_4$-complexes with such a fundamental group $\pi$,
with $Z$ minimal, and such that  $(c_X^*)^{-1}w_1(X)=(c_Z^*)^{-1}w_1(Z)$.
Then $[X,Z]_K$ maps onto $[X,P_3(Z)]_K$, by cellular approximation,
and hence onto $\{f\in[X,L]_K\mid{f^*}k_2(Z)=0\}.$
Can the condition $f^*k_2(Z)=0$ be made more explicit?
The map $f$ corresponds to a class in $H^2(X;\Pi)$ and 
$H^4(X;\Gamma_W(\Pi))\cong\mathbb{Z}^w\otimes_{\mathbb{Z}[\pi]}\Gamma_W(\Pi))$,
by Poincar\'e duality for $X$.
Theorem \ref{s2bdle} suggests that we should consider 
the image of $f^*k_2(Z)$ in $H^2(\pi;\mathbb{F}_2)$, 
under the epimorphism of Lemma \ref{symmod2}.
Apart from this, we must determine when such a map $f$ has 
a degree-1 representative $g:X\to{Z}$.

\section{verifying the torsion condition for $Z*_{m}$}

If $\pi$ is a 2-dimensional duality group but not a $PD_2$-group then 
$\Pi=E^2\mathbb{Z}$ is finitely generated as a left $\mathbb{Z}[\pi]$-module, 
but is not finitely generated as an abelian group.
The associated groups $\Pi\odot_\pi\Pi$ 
and $\mathbb{Z}^w\otimes_{\mathbb{Z}[\pi]}\Gamma_W(\Pi))$ 
are infinitely generated abelian groups with no natural module structure.
In this section we shall investigate the 2-torsion condition.

We consider first groups which have a one-relator presentation
$\mathcal{P}=\langle{X}\mid{r}\rangle$.
It is well-known that if the relator $r$ is not conjugate to a proper power 
then the associated 2-complex $C(\mathcal{P})$ is aspherical,
and so $g.d.\pi\leq2$.
(See \S\S9-11 of Chapter III of \cite{LS}, or \cite{DV}.)

\begin{lemma}
\label{PioPi}
Let $\pi$ be a group with a finite one-relator presentation
$\langle{X}\mid{r}\rangle$ and $c.d.\pi=2$,
and let $w=1$.
Let $\Pi=E^2\mathbb{Z}$.
Then $\Pi\odot_\pi\Pi\cong\mathbb{Z}[\pi]/(U+\overline\Delta)$,
where $\Delta$ is the right ideal generated by the free derivatives $\frac{\partial{r}}{\partial{x}}$, for all $x\in{X}$, 
and $U$ is the subgroup of $\mathbb{Z}[\pi]$ generated by 
$g-g^{-1}$, for all $g\in\pi$.
\end{lemma}

\begin{proof}
On dualizing the Fox-Lyndon resolution of $\mathbb{Z}$ associated to $\langle{X}\mid{r}\rangle$
we see that ${H^2}(\pi;\mathbb{Z}[\pi])\cong\mathbb{Z}[\pi]/\Delta$,
and so $\Pi\cong\mathbb{Z}[\pi]/\overline\Delta$.

Define a function $T:\mathbb{Z}[\pi]\otimes\mathbb{Z}[\pi]\to
\mathbb{Z}[\pi]\otimes\mathbb{Z}[\pi]$ by $T(s\otimes{t})=\bar{s}\otimes{t}$,
for all $s,t\in\mathbb{Z}[\pi]$.
Then $T$ is an additive bijection and
$T(gs\otimes{gt})=\bar{s}\bar{g}\otimes{gt}$,
for all $g\in\pi$.
Hence $T$ induces an additive isomorphism from the quotient 
of $\mathbb{Z}[\pi]\otimes\mathbb{Z}[\pi]$ by the diagonal action of $\pi$ to 
$\mathbb{Z}[\pi]\otimes_{\mathbb{Z}[\pi]}\mathbb{Z}[\pi]\cong\mathbb{Z}[\pi]$,
which maps $s\otimes{t}$ to $\bar{s}t$.
The images of $\mathbb{Z}[\pi]\otimes\overline{\Delta}$ and
$\overline{\Delta}\otimes\mathbb{Z}[\pi]$ under $T$ are 
$\overline{\Delta}$ and $\Delta$, respectively.
We obtain the symmetric product $\mathbb{Z}[\pi]\odot\mathbb{Z}[\pi]$
by factoring out the tensor square $\mathbb{Z}[\pi]\otimes\mathbb{Z}[\pi]$ 
by all sums of terms of the form $s\otimes{t}-t\otimes{s}$.
The image of all such sums in $\mathbb{Z}[\pi]$ is the subgroup $U$.
(Note that $U$ is not usually an ideal!)
Since
$\mathbb{Z}[\pi]\odot_{\mathbb{Z}[\pi]}\mathbb{Z}[\pi]\cong\mathbb{Z}[\pi]/U$
and $U+\overline{\Delta}=U+\Delta$,
we see that $\Pi\odot_\pi\Pi\cong\mathbb{Z}[\pi]/(U+\overline\Delta)$.
\end{proof}

This may be extended to other 2-dimensional duality groups as follows.
Suppose that $P$ is an $a\times{b}$ presentation matrix for $\Pi$.
View $\mathbb{Z}[\pi]^b$ as a module of row vectors, 
with standard basis $\{e_1,\dots,e_b\}$.
Define a function $T:\mathbb{Z}[\pi]^b\otimes\mathbb{Z}[\pi]^b\to
{M_b(\mathbb{Z}[\pi])}$ by $T(se_i\otimes{te_j})=\bar{s}te_{ij}$,
the matrix with $(i,j)$ entry $\bar{s}t$ and all other entries 0.
Then $T(\mathbb{Z}[\pi]^b\otimes\mathrm{Im}(P))$ is $Row(P)$, 
the left ideal in $M_b(\mathbb{Z}[\pi])$ consisting of matrices 
with all rows in $\mathrm{Im}(P)$, while $T(\mathrm{Im}(P)\otimes\mathbb{Z}[\pi]^b)$
is the right ideal $Row(P)^\dagger$, 
the conjugate transpose of $Row(P)$.
Let $V$ be the subgroup generated by $M-M^\dagger$, 
for all $M$ in $M_b(\mathbb{Z}[\pi])$.
Then 
$\Pi\otimes_\pi\Pi\cong
{M_b(\mathbb{Z}[\pi])}/(V+Row(P)+Row(P)^\dagger)$.

Suppose now that $\pi$ is solvable.
Then it is a Baumslag-Solitar group $\mathbb{Z}*_m$, 
with a one-relator presentation $\langle{a,t}\mid{tat^{-1}a^{-m}}\rangle$,
for some $m\not=0$ \cite{Gi79}.
In this case we have a more explicit model for $\Pi\odot_\pi\Pi$.

\begin{theorem}
\label{BStf}
Let $\pi=\mathbb{Z}*_m$ and let ${w:\pi\to\mathbb{Z}^\times}$ be a homomorphism.
Let $\Pi=E^2\mathbb{Z}$.
If $|m|>1$ then $\Pi\odot_\pi\Pi$ is torsion free.
\end{theorem}

\begin{proof} 
We may assume that $\pi$ has the presentation $\langle{a,t}\mid{tat^{-1}a^{-m}}\rangle$.
Let $A=\langle\langle{a}\rangle\rangle$.
Then $\pi\cong{A\rtimes\mathbb{Z}}$.
Let $a_n=t^nat^{-n}$ in $A$, for all $n\in\mathbb{Z}$,
and let $a^x=a^k_{-n}$, for all $x=\frac{k}{m^n}\in\mathbb{Z}[\frac1m]$.
Then $a^0=1$, $a^1=a$ and $a^xa^y=a^{x+y}$ for all $x,y\in\mathbb{Z}[\frac1m]$,
and $x\mapsto{a^x}$ determines an isomorphism from $\mathbb{Z}[\frac1m]$ to $A$.
Every element of $\pi$ is uniquely of the form $t^pa^x$, 
for some $p\in\mathbb{Z}$ and $x\in\mathbb{Z}[\frac1m]$,
and $(t^pa^x)^{-1}=t^{-p}a^{-m^px}$.
If $m$ is even then $w(a^x)=1$ for all $x$; 
if $m$ is odd then $w(a^x)=w(a^{mx})$ for all $x$.

The function which sends $a_n$ to $a_{n+1}$ determines an automorphism
$\alpha$ of the commutative domain
$D=\mathbb{Z}[A]\cong\mathbb{Z}[a_n|n\in\mathbb{Z}]/(a_{n+1}-a_n^m)$,
and $\mathbb{Z}[\pi]$ is isomorphic to the twisted Laurent extension
$D_\alpha[t,t^{-1}]$.
(An explicit isomorphism is given by the function which sends $t^pa_n\in\oplus_{p\in\mathbb{Z}}{t^p}D$
to $t^{n+p}at^{-n}\in\mathbb{Z}[\pi]$ for all $n,p\in\mathbb{Z}$.)

We shall assume henceforth that $m$ is positive, for simplicity of notation.
Let $J_0=\{1,\dots,m-1\}$, let $J_s=\{\frac{d}{m^s}\mid0<d<m^{s+1},~(d,m)=1\}$,
for all $s\geq1$, and let $J=\cup_{s\geq0}J_s$.
Then $E={D/D(a^m-w(a)^m)}$ is freely generated as an abelian group by the image  
of $\{a^x\mid{x}\in{J}\}$.

The images of the free derivatives of the relator $r=tat^{-1}a^{-m}$ 
in $\mathbb{Z}[\pi]$ are $\frac{\partial{r}}{\partial{a}}=t-\mu_m$, 
where $\mu_m=\Sigma_{i=0}^{i=m-1}a^i$,
and $\frac{\partial{r}}{\partial{t}}=1-a^m$.
Hence 
\[
\Pi\cong\mathbb{Z}[\pi]/\mathbb{Z}[\pi](a^m-w(a)^m,t\overline{\mu_m}-w(t))
\cong(\oplus_{k\in\mathbb{Z}}t^kE)/\sim,
\]
where 
\[t^ka^x\sim{w(t)t^k}a^xt\overline{\mu_m}=w(t)t^{k+1}a^{\frac{x}m}\overline{\mu_m},\quad
\mathrm{for~all}~k\in\mathbb{Z}~\mathrm{and}~x\in{J}.
\]
As an abelian group, $\Pi\cong\varinjlim{t^pE}$,
the direct limit as $p\to+\infty$ of the family 
of $D$-linear monomorphisms $\sigma:t^pE\to{t^{p+1}E}$ 
given by $\sigma(t^pa^{x})=w(t)t^{p+1}a^{\frac{x}m}\overline{\mu_m}$,
for all $p\in\mathbb{Z}$ and $x\in{J}$.
It follows easily that 
\[
\Pi\odot\Pi\cong\varinjlim(t^kE\odot{t^kE})=(\oplus_{p\in\mathbb{Z}}t^pE\odot{t^pE})/\sim,
\]
where
$t^ka^x\odot{t^ka^y}\sim{t^{k+1}}a^{\frac{x}m}\overline{\mu_m}\odot{t^{k+1}}a^{\frac{y}m}\overline{\mu_m}$,
for all $k\in\mathbb{Z}$ and $x,y\in{J}$.

Setting $z=y-x$ gives
\[
t^ka^x(1\odot{a^z})\sim
t^{k+1}a^{\frac{x}m}(\overline{\mu_m}\odot\overline{\mu_m}{a^{\frac{z}m}}).
\]
(Here $\pi$ acts diagonally on $\Pi\odot\Pi$.)
We may expand the term in parentheses as
\[
\overline{\mu_m}\odot\overline{\mu_m}a^{\frac{z}m}
=\Sigma_{i,j=0}^{i,j=m-1}w(a)^ia^{-i}(1\odot{w(a)^{i-j}a^{i-j}a^{\frac{z}m}}).
\]
Define a function $f:E\to\Pi\odot\Pi$ by $f(e)=1\odot{e}=e\odot1$ for $e\in{E}$.
Then $f$ is additive and $f(a^x)=w(a)^ma^xf(a^{m-x})$ for all $x$,
since $a^x\odot1=a^x(1\odot{w(a)^ma^{m-x}})$.
The induced map from $E$ to $\Pi\odot_\pi\Pi$ is onto,
and 
\[
\Pi\odot_\pi\Pi\cong{E/N},
\]
where $N$ is the subgroup generated by
\[
\{a^z-w(a^{m-z})a^{m-z},~a^z-w(t)m\Sigma_{k=0}^{m-1}w(a)^ka^{k+\frac{z}m},
~\forall{z\in{J}}\}.
\]
Since $a^z-w(a^{m-z})a^{m-z}\in{N}$, the images $[a^z]$
of the elements $a^z$ with ${0\leq{z}\leq\frac{m}2}$ generate the quotient $E/N$.
Given that $[a^z]=w(a)^{m-z}[a^{m-z}]$,
the conditions $[a^z]=w(t)m\Sigma_{k=0}^{m-1}w(a)^k[a^{k+\frac{z}m}]$ and
$[a^{m-z}]=w(t)m\Sigma_{k=0}^{m-1}w(a)^k[a^{k+\frac{m-z}m}]$ 
are equivalent.

Let $F_s$ be the subgroup of $\Pi\odot_\pi\Pi$ generated by 
$\{[a^z]\mid{m^{s-1}z\in\mathbb{Z}}\}$,
for $s\geq1$.
If $|m|>1$ then the conditions $[a^z]=[w(t)m\Sigma_{k=0}^{m-1}w(a)^k[a^{k+\frac{z}m}]$ in $E/N$,
for $z\in{J}$,  
imply that $F_s$ is generated by 
$\{[a^0]\}\cup\{[a^z]\mid0<2z\leq{m},~m^{s-1}z\in\mathbb{Z}, ~m^{s-2}z\not\in\mathbb{Z}\}$,
for all $s\geq1$,
with a single relation of the form $(1-w(t)m)[a^0]=m^s\sigma$,
where $\sigma$ is a sum of the generators $[a^z]$ with 
$z\in{J_s}$ such that $0<2z<m$, 
and coefficients not divisible by $(1-m)$.
Hence $F_s$ is torsion free, for all $s\geq1$.
Since $\Pi\odot_\pi\Pi$ is the increasing union $\cup_{s\geq0}F_s$,
it  is also torsion free.
\end{proof}

If $m=\pm1$ and $w=1$ then $\Pi\odot_\pi\Pi\cong\mathbb{Z}$.
However, if $m=\pm1$ and $w\not=1$ then  $\Pi\odot_\pi\Pi=Z/2Z$,
and so the theorem does not extend to this case.

Note that the argument of the final paragraph implies that
every generator of $\Pi\odot_\pi\Pi$ is $m$-divisible,
and that $\Pi\odot_\pi\Pi$ is a free $\mathbb{Z}[\frac1m]$-module 
of infinite rank.

\begin{cor}
If $\pi=\mathbb{Z}*_m$  with $|m|>1$ then
$\mathbb{Z}\otimes_{\mathbb{Z}[\pi]}\Gamma_W(\Pi)$ is torsion free.
\end{cor}

\begin{proof}
If $m$ is even this follows immediately from the theorem
and the short exact sequence of Lemma \ref{symmod2}, 
since $H^2(\pi;\mathbb{F}_2)=0$ then.
If $m$ is odd we may apply the final part of Lemma \ref{symmod2}. 
Letting $x$ be the image of $1\in\mathbb{Z}[\pi]$,
we see that $\gamma_\Pi(x)$ generates $\Pi/(2,I_w)\Pi=H^2(\pi;\mathbb{F}_2)$,
while the image of $f(1)=x\odot{x}$ in $\Pi\otimes_\pi\Pi$ is 
not 2-divisible. 
\end{proof}

It is not immediately obvious that the models for $\Pi\odot_\pi\Pi$ in
Lemma \ref{PioPi} and Theorem \ref{BStf} agree when $\pi\cong\mathbb{Z}*_m$.
However (assuming for simplicity that $m\geq1$ and  $w=1$), the relations
\[
t^ka^x\sim_1{t^k}a^xt\mu_m=t^{k+1}a^{\frac{x}m}\mu_m\quad\mathrm{and}
\quad
{t^ka^x}\sim_2(t^ka^x)^{-1}=t^{-k}a^{-m^kx}
\]
together imply that $\Pi\odot_\pi\Pi$ is generated by the image of $E$ 
and that 
\[
a^z~\sim_1~{t}a^{\frac{z}m}\mu_m=\Sigma_{i=0}^{i=m-1}ta^ia^{\frac{z}m}
~\sim_2~\Sigma_{i=0}^{i=m-1}t^{-1}a^{-mi-z}=mt^{-1}a^{-z}
\]
\[
\sim_1~{m}a^{-\frac{z}m}\mu_m~\sim_2~{m}\Sigma_{i=0}^{i=m-1}a^{-i}a^{\frac{z}m}=
ma^{\frac{z}m}\mu_m,
\]
for all $z\in{J}$.
This is enough to see that $\mathbb{Z}[\pi]/(U+\overline\Delta)$ 
is a quotient of $E/N$, as an abelian group,
when $\Delta=(a^m-1,t-\mu_m)\mathbb{Z}[\pi]$.

Can we extend the argument of Theorem \ref{BStf}  in any way?
In particular, does the hypothesis of Theorem \ref{min} hold
for ascending HNN extensions $F*_\varphi$ with base $F$ 
a finitely generated free group and $\varphi$ an endomorphism 
such that $p\prec\varphi(p)$ for all $1\prec{p}$ with respect to some left ordering $\prec$ on $F$?
When $\varphi$ is an automorphism $\pi$ is a semidirect product ${F(r)}\rtimes_\varphi\mathbb{Z}$, 
and the result of Theorem \ref{min} holds by Theorem \ref{freebyZ}.
If $\varphi$ has odd order and $w=1$ then it can be shown that $\Pi\odot_\pi\Pi$ is 2-torsion free.
However, as we have seen, the argument of Theorem \ref{min} itself
must be changed in order to accommodate other semidirect products
${F(r)}\rtimes_\varphi\mathbb{Z}$ and orientation characters $w$. 

\section{$4$-manifolds and $2$-knots}

In this section we shall invoke surgery arguments, 
and so ``4-manifold" and ``$s$-cobordism" shall mean TOP 4-manifold
and (5-dimensional) TOP $s$-cobordism, respectively.
We continue to assume that $\pi$ is a 2-dimensional duality group.

Suppose that $\pi$ is either the fundamental group of a finite graph of groups,
with all vertex groups $\mathbb{Z}$, 
or is square root closed accessible,
or is a classical knot group.
(This includes all $PD_2$-groups, semidirect products
$F(s)\rtimes\mathbb{Z}$ and the solvable groups $\mathbb{Z}*_m$.)
Then $Wh(\pi)=0$, $L_5(\pi,w)$ acts trivially on
the $s$-cobordism structure set $S^s_{TOP}(M)$ 
and the surgery obstruction map $\sigma_4(M):[M,G/TOP]\to{L_4(\pi,w)}$
is onto, for any closed 4-manifold $M$ realizing $(\pi,w)$.
(See Lemma 6.9 and Theorem 17.8 of \cite{Hi}.)

If, moreover, $w_2(\widetilde M)=0$ then 
every 4-manifold homotopy equivalent to $M$ is $s$-cobordant to $M$, 
by Theorem 6.7, Lemma 6.5 and Lemma 6.9 of [Hi].
If $w_2(\widetilde M)\not=0$ there are at most two $s$-cobordism classes 
of homotopy equivalences.
After stabilization by connected sum with copies of $S^2\times S^2$
there are two $s$-cobordism classes, 
distinguished by their KS smoothing invariants (see \cite{KT01}).

If $\pi$ is solvable then 5-dimensional $s$-cobordisms are products 
and stabilization is unnecessary,
so homotopy equivalent 4-manifolds with fundamental group $\pi$ 
are homeomorphic if the universal cover is Spin, 
and there are two homeomorphism types otherwise,
distinguished by their KS invariants.

The Baumslag-Solitar group $\mathbb{Z}*_m$ has such 
a graph-of-groups structure and is solvable, 
so the 5-dimensional TOP $s$-cobordism theorem holds.
Thus if $m$ is even the closed orientable 4-manifold $M$
with $\pi_1(M)\cong\mathbb{Z}*_m$ and $\chi(M)=0$ is unique up to homeomorphism.
If $m$ is odd there are two such homeomorphism types,
distinguished by the second Wu class $v_2(M)$.

Let $\pi$ be a finitely presentable group with $c.d.\pi=2$.
If $H_1(\pi;\mathbb{Z})=\pi/\pi'\cong\mathbb{Z}$ and $H_2(\pi;\mathbb{Z})=0$ 
then $\mathrm{def}(\pi)=1$, by Theorem 2.8 of \cite{Hi}.
If moreover $\pi$ is the normal closure of a single element
then $\pi$ is the group of a 2-knot $K:S^2\to{S^4}$.
(If the Whitehead Conjecture is true every knot group of deficiency 1 
has cohomological dimension at most 2.)
Since $\pi$ is torsion free it is indecomposable, by a theorem of Klyachko
\cite{Kly}.
Hence $\pi$ has one end. 

Let $M=M(K)$ be the closed 4-manifold obtained by surgery on the 2-knot $K$.
Then $\pi_1(M)\cong\pi=\pi{K}$ and $\chi(M)=\chi(\pi)=0$,
and so $M$ is a minimal model for $\pi$.
If $K$ is reflexive it is determined by $M$ and the orbit of its meridian 
under the automorphisms of $\pi$ induced by self-homeomorphisms of $M$.
If $\pi=F(s)\rtimes\mathbb{Z}$ the homotopy type of $M$ is determined by $\pi$,
as explained in \S4 above.
Since $H^2(M;\mathbb{F}_2)=0$ it follows that $M$ is $s$-cobordant 
to the fibred 4-manifold with $\#^s(S^2\times{S^1})$ 
and fundamental group $\pi$.
Knots with Seifert surface a punctured sum
$\#^s(S^2\times{S^1})_o$ are reflexive.
Thus if $K$ is fibred (and $c.d.\pi=2$) it is determined (among all 2-knots) 
up to $s$-concordance and change of orientations by $\pi$ together with 
the orbit of its meridian under the automorphisms of $\pi$ induced by 
self-homeomorphisms of the corresponding fibred 4-manifold.
(This class of 2-knots includes all Artin spins of fibred 1-knots.
See \S6 of Chapter 17 of \cite{Hi} for more on 2-knots with $c.d.\pi=2$.)

A stronger result holds for the group $\pi=\mathbb{Z}*_2$.
This is the group of Fox's Example 10, which is a ribbon 2-knot \cite{Fo62}.
In this case $\pi$ determines the homotopy type of $M(K)$, 
by Theorems \ref{BStf} and \ref{min}.
Since metabelian knot groups have an unique conjugacy class
of normal generators (up to inversion) 
Fox's Example 10 is the unique 2-knot 
(up to TOP isotopy and reflection) with this group.
(If $K$ is any other nontrivial 2-knot such that $\pi{K}$
is torsion free and elementary amenable then
$M(K)$ is homeomorphic to an infrasolvmanifold. 
See Chapters 16-18 of \cite {Hi}.)

Let $\Lambda=\mathbb{Z}[\mathbb{Z}]$.
There is a hermitian pairing $B$ on a finitely generated free $\Lambda$-module 
which is not extended from the integers,
and a closed orientable 4-manifold $M_B$ with $\pi_1(M)\cong\mathbb{Z}$
and such that the intersection pairing on $\pi_2(M_B)$ is equivalent to $B$.
In particular, $M_B$ is not the connected sum of $S^1\times S^3$ with a
1-connected 4-manifold \cite{HT97}.
Let $N_B\subset M_B$ be an open regular neighbourhood of a loop representing a
generator of $\pi_1(M_B)$.
Suppose that $X$ is a closed 4-manifold with fundamental group $\pi$
and that there is an orientation preserving loop $\gamma\subset X$
whose image in $\pi/\pi'$ generates a free direct summand.
(For instance, there is such a loop if $X$ is the total space of an $S^2$-bundle
over an aspherical closed surface $F$ with $\beta_1(F)>1$).
Then $\gamma$ has a regular neighbourhood homeomorphic to $N_B$,
and we may identify these regular neighbourhoods to obtain
$N=M_B\cup_{S^1\times D^3}X$. 
The inclusion of $\langle g\rangle$ into $\pi$ and the projection of
$\pi$ onto $Z$ mapping $g$ to 1 determines a
monomorphism $\gamma:\Lambda\to\mathbb{Z}[\pi]$ and a retraction
$\rho:\mathbb{Z}[\pi]\to\Lambda$.
In particular, 
$\Lambda\otimes_{\mathbb{Z}[\rho]}
(\mathbb{Z}[\pi]\otimes_{\mathbb{Z}[\gamma]}B)
\cong B$.
It follows that as $B$ is not extended from $\mathbb{Z}$ 
neither is $\mathbb{Z}[\pi]\otimes_{\mathbb{Z}[\gamma]}B$.
Therefore $N$ is not the connected sum of $E$ with a 1-connected
4-manifold.

\section{some questions}

We shall collect here some of the questions that have arisen en route.

\begin{enumerate}

\item are strongly minimal $PD_4$-complexes always of $v_2$-type II or III?

\item if $X$ has $v_2$-type I and $c.d.\pi=2$ 
is there a minimal model $f:X\to{Z}$ with $v_2(Z)=0$?

\item must a strongly minimal $PD_4$-complex with $\pi$ 
a nontrivial free product be a connected sum?

\item can we say more about $PD_4$-complexes with $\pi$ infinitely 
ended and $\Pi=0$?

\item are there strongly minimal $PD_4$-complexes with $E^\dagger\cong{E}^3\mathbb{Z}\not=0$?

\item do strongly minimal $PD_4$-complexes always have $k_1=0$?

\item If $X$ is a $PD_4$-complex such that $\pi=\pi_1(X)$ has one end 
and $\Pi=\pi_2(X)$ is projective, 
must $\pi$ be a $PD_4$-group?

\item to what extent do $k_2$ and $v_2$ determine each other?

\item in Theorem \ref{Postcd2} must $Y$ be a $PD_4$-complex?

\item can we extend Theorems \ref{min} and \ref{BStf} to encompass 
the known results for $\pi$ a semidirect product $F(r)\rtimes\mathbb{Z}$
(at least when $w=1$)?

\item can we relax the running hypothesis that $\pi$ should have one end?

\end{enumerate}

The final four questions are of most interest for
the present work.

\newpage

\end{document}